\newtheorem{assump}{Assumption}
\theoremstyle{plain}
\newtheorem{theorem}{Theorem}[section]
\newtheorem{corollary}[theorem]{Corollary}
\newtheorem{lemma}[theorem]{Lemma}
\newtheorem{proposition}[theorem]{Proposition}
\theoremstyle{definition}
\theoremstyle{remark}
\newtheorem{remark}[theorem]{Remark}
\newcommand{\eps}{\varepsilon}
\newcommand{\BR}{\mathbb{R}}
\newcommand{\BP}{\mathbb{P}}
\newcommand{\BE}{\mathbb{E}}
\newcommand{\BZ}{\mathbb{Z}}
\newcommand{\BN}{\mathbb{N}}
\newcommand{\sfM}{\mathsf{M}}
\newcommand{\cc}{\mathsf{c}}
\newcommand{\ind}{\mathds{1}}
\begin{document}
\title[Stochastic hybrid systems with fast sampling effects]{Uniform-in-time bounds for a stochastic hybrid system with fast periodic sampling and small white-noise
}

\author[Shivam Singh Dhama and Konstantinos Spiliopoulos]{}
%\author{Shivam Dhama$^*$}
\address{Department of Mathematics and Statistics\\ Boston University, Boston, {\sc ma}, 02215, {\sc usa}}
\email{ssdhama@bu.edu, shivamsd.maths@gmail.com}

\address{Department of Mathematics and Statistics\\ Boston University, Boston, {\sc ma}, 02215, {\sc usa}}
 \email{kspiliop@math.bu.edu}
%\thanks{This work was partially supported by the National Science Foundation {\sc dms} 2107856 and {\sc dms} 2311500.}
 \subjclass[2020]{60F17.}
 \keywords{Stochastic differential equation, uniform time estimates, hybrid dynamical system, periodic sampling, multiple scales, central limit theorem.}
%\thanks{The second author acknowledges research support from DST SERB Project No. EMR/2015/000904.}
\thanks{$^*$Corresponding author: Shivam Singh Dhama}

%\author[Shivam Dhama]{}
%\author{Shivam Dhama$^*$}
%\address{Department of Mathematics and Statistics\\ Boston University, Boston, MA, USA}
%\email{ssdhama@bu.edu}
\date{\today}
%\date{\today. Project commenced April 14, 2022.}

\maketitle

\centerline{\scshape Shivam Singh Dhama$^*$ and Konstantinos Spiliopoulos}
%\centerline{\scshape Shivam Dhama$^*$ and Chetan D. Pahlajani}
\medskip
{\footnotesize

 \centerline{Department of Mathematics and Statistics, Boston University, Boston, Massachusetts, 02215, {\sc usa}}
}

%\centerline{\scshape Shivam Dhama$^*$}
%\medskip
%{\footnotesize
%
% \centerline{Discipline of Mathematics, Indian Institute of Technology Gandhinagar, Palaj, Gandhinagar 382055, India}
%}
%\bigskip
%\address{Discipline of Mathematics, Indian Institute of Technology Gandhinagar, Palaj, 382055, India}
%\begin{center}
%\rule{17cm}{.01mm}
%\end{center}

%%%----------------------------------------------------

\begin{center}
\rule{17cm}{.01mm}
\end{center}
\begin{abstract}
We study the asymptotic behavior, uniform-in-time, of a non-linear dynamical system under the combined effects of fast periodic sampling with period $\delta$ and small white noise of size $\eps,\thinspace 0<\eps,\delta \ll 1$. The dynamics depend on both the current and recent measurements of the state, and as such it is not Markovian. Our main results can be interpreted as Law of Large Numbers ({\sc lln})  and Central Limit Theorem ({\sc clt}) type results. {\sc lln}  type result shows that the resulting stochastic process is close to an ordinary differential equation ({\sc ode}) uniformly in time as $\eps,\delta \searrow 0.$ Further, in regards to {\sc clt}, we provide quantitative and uniform-in-time control of the fluctuations process. The interaction of the small parameters provides an additional drift term in the limiting fluctuations, which captures both the sampling and noise effects. As a consequence, we obtain a first-order perturbation expansion of the stochastic process along with time-independent estimates on the remainder. The zeroth- and first-order terms in the expansion are given by an {\sc ode} and {\sc sde}, respectively. Simulation studies that illustrate and supplement the theoretical results are also provided.
\end{abstract}
\begin{center}
\rule{17cm}{.01mm}
\end{center}

%%------------------------------------------------

\section{Introduction}
%\color{blue} (Model description) \color{black}
In this paper, we consider the asymptotic analysis of a controlled stochastic differential equation ({\sc sde})
\begin{equation}\label{E:Intro-SDE}
dX_t^{\eps,\delta}=\left\{f(X_t^{\eps,\delta})+\kappa(X_{\pi_\delta(t)}^{\eps,\delta})\right\}dt+\eps \sigma(X_t^{\eps,\delta})dW_t, \quad X_0^{\eps,\delta}=x_0,
\end{equation}
as $\eps,\delta \searrow 0$ and $t\in [0,\infty)$. Here, the parameters $\eps$ and $\delta=\delta(\eps),\thinspace 0< \eps,\delta \ll 1,$ correspond to the size of noise and the rate of state measurements, respectively. The operator $\pi_\delta(t) \triangleq \delta \lfloor {t}/{\delta} \rfloor$ transforms the continuous time $t\in [0,\infty)$ to the integer multiple of $\delta$ and the process $W=\{W_t: t \ge 0\}$ represents a standard Brownian motion.  For the detailed assumptions on the coefficients of (\ref{E:Intro-SDE}) see Section \ref{S:Problem-setup}. In our problem set-up, equation \eqref{E:Intro-SDE} can be thought of as a small random perturbation of a non-linear control system
\begin{equation}\label{E:Intro-det-sys}
{dx}=\{f(x)+u\}dt, \quad x(0)= x_0
\end{equation}
with a feedback control law $u=\kappa(x)$ using its sample-and-hold implementation. In our model, by the sample-and-hold implementation of the control function $u$, we mean $u$ is updated through state measurements at the time instants $k\delta$ and it remains fixed throughout in the sampling interval $[k \delta, (k+1)\delta), \thinspace k\in \BZ^+$. We note that the instantaneous value of $dX_t^{\eps,\delta}$ in \eqref{E:Intro-SDE} depends on both the current state value $X_t^{\eps,\delta}$ and the most recent measurement $X_{\pi_\delta(t)}^{\eps,\delta}.$ Hence, it is not a Markovian process.

%\color{blue} (Importance)\color{black}
The study of system \eqref{E:Intro-SDE} is important as it appears very frequently in control theory where a system of the form $dx=f(x)dt$ is controlled by a digital computer and is perturbed by a small white noise. In such cases, the state of the system is evolved continuously with time whereas the control function requires the state values (samples) only at certain discrete-time instances. This interaction of state evolution at discrete- and continuous-time makes the system a hybrid dynamical system ({\sc hds})\cite{GST-HybDynSys-book}. An example of {\sc hds} is sampled-data systems \cite{YuzGoodwin-book} where the measurements of the state are only available at discrete time instants and control input is updated using a digital computer. Much of the study on these systems focused on stability analysis (e.g., \cite{NesicTeelCarnevale-TAC2009,YuzGoodwin-book}). To the best of our knowledge, there is very little work on the interaction of sampling and noise (see, \cite{dhama2021asymptotic,dhama2023fluctuation} for the recent results in this direction). Hence, in this paper, we aim to explore the asymptotic analysis of a {\sc hds} under the combined effects of sampling and noise, uniform-in-time.

%The sampling problem in the deterministic case has been studied by many authors in the form of stability analysis (mention reference) but to the best of our knowledge there is a very few work on the interaction of sampling and noise.   To the best of our knowledge, the interaction of sampling and noise has not explored much

%\color{blue} (Goals)\color{black}
%This article has two principal goals. First, we provide conditions on the coefficients of system \eqref{E:Intro-SDE} which enables one to accomplish this asymptotic analysis \textit{uniformly in time}. Second, in the uniform-in-time limiting fluctuation analysis of the process $X_t^{\eps,\delta}$ about its mean, we identify an \textit{additional drift} term capturing the interactions in the limit of fast sampling and small noise.

%\color{blue} (Our contribution)\color{black}
Part of the contribution of this work is in studying the asymptotic analysis, uniform-in-time ({\sc uit}), of the non-linear\footnote{Equation \eqref{E:Intro-SDE} is classified as a non-linear {\sc sde} because the functions $f$, $\kappa$ and $\sigma$ could be any non-linear functions satisfying Assumptions \ref{A:Poly-Lip}, \ref{A:Poly-growth} and \ref{A:Poly-Int}.} {\sc sde} \eqref{E:Intro-SDE} under the combined influence of fast ($\delta\searrow 0$) periodic sampling and small ($\eps \searrow 0$) state-dependent white-noise. %More importantly, the coefficients of {\sc sde} \eqref{E:Intro-SDE} are considered satisfying slightly weaker assumptions (Assumptions \ref{A:Integral-Suff-Cond} in first case, and \ref{A:Poly-Lip}, \ref{A:Poly-growth}, \ref{A:Poly-Int} in another case).
As $\eps,\delta \searrow 0,$ we found that the resulting stochastic process $X_t^{\eps,\delta}$ and its rescaled fluctuation process $Z_t^{\eps,\delta}\triangleq \eps^{-1}(X_t^{\eps,\delta}-x_t)$ are close to an ordinary differential equation ({\sc ode}) and {\sc sde}, respectively. The {\sc ode} describing the mean behavior is the closed-loop {\sc ode}, regardless of how $\eps,\delta \searrow 0$; however, the fluctuation behavior is found to depend on the relative rates at which $\eps,\delta \searrow 0$. In the fluctuation study, most interesting case is when both the small parameters are comparable in size, i.e., $\delta/\eps$ tends to a positive constant. In this case, the limiting {\sc sde} for the fluctuations has both a diffusive term due to small noise and an extra drift term which captures the sampling effect. More precisely, in our main results (Theorems \ref{T:LLN-UIT} and \ref{T:fluctuations-R-1-2}), we show that for any natural number $p\ge 1,$
\begin{equation*}
\begin{aligned}
\sup_{t \ge 0}\BE\left[|X_t^{\eps,\delta}-x_t|^p\right] & \le C \eta(\eps,\delta),\\
\sup_{t \ge 0}\BE\left[|Z_t^{\eps,\delta}-Z_t|^p\right] & \le C \theta(\eps,\delta),
\end{aligned}
\end{equation*}
where $C$ is always a \textit{time-independent} positive constant, $\eta(\eps,\delta)$ and $\theta(\eps,\delta)$ are some functions which converge to zero as $\eps,\delta$ vanish and $x_t$ is the solution of \eqref{E:Intro-det-sys}. Here, the stochastic process $Z_t$ solves the {\sc sde}
\begin{equation}\label{E:Intro-limiting-SDE}
Z_t=\int_0^t [Df(x_s)+D\kappa(x_s)]Z_s \thinspace ds - \frac{\cc}{2} \int_0^t D \kappa(x_s)[f(x_s)+\kappa(x_s)] \thinspace ds + \int_0^t \sigma(x_s) dW_s,
\end{equation}
where $\cc \triangleq \lim_{\eps \searrow 0}\frac{\delta}{\eps}$ (see equation \eqref{E:c}). %Our main results can be thought of as the Law of Large Numbers ({\sc lln}) and Central Limit Theorem({\sc clt}) type results.
As a consequence of the {\sc clt} (Theorem \ref{T:fluctuations-R-1-2}), the stochastic process $X_t^{\eps,\delta}$ can be approximated, uniform-in-time, by the process $x_t+ \eps Z_t$ as $\eps,\delta \searrow 0,$ that is,
\begin{align*}
X_t^{\eps,\delta} &\approx x_t+ \eps Z_t, \qquad t\in [0,\infty).
\end{align*}

Indeed, we make this approximation precise by a first-order perturbation expansion of the stochastic process $X_t^{\eps,\delta}$ (i.e., $X_t^{\eps,\delta}=x_t+ \eps Z_t + o(\eps^2)$) in the powers of small parameter %\footnote{We consider the expansion in the powers of the coarser parameter, i.e., $\eps$ in Regimes $1$ and $2$.}
 $\eps$ along with the time-independent error bounds on the remainder. Here, the zeroth- and first-order terms are characterized by the {\sc ode} $\frac{dx}{dt}=f(x)+\kappa(x), \thinspace x(0)= x_0$ describing the mean behavior and the {\sc sde} \eqref{E:Intro-limiting-SDE} which captures the fluctuations about the mean.
% (\color{red} Add comments about the novelty, methodology, and the condition which enables to obtain the time-independent bounds\color{black})

%\color{blue} (Novelty and methodology)\color{black}
The results of this paper are novel mainly in terms of the time-independent convergence bounds and identification of an additional drift term in the fluctuation analysis. The fluctuations allow one to write a first-order perturbation expansion of the stochastic process of interest along with the bounds on the remainder uniform-in-time.
 The way at which $\epsilon,\delta\searrow 0$ affects the limiting dynamics in a crucial way. In addition, our proof of the uniform-in-time {\sc clt} shows that what is needed is a control on the behavior of the gradient of the drift of the limiting dynamics (\ref{E:Intro-det-sys}) via Assumption \ref{A:Poly-Int} together with uniform-in-time control of appropriate moments of $X_t^{\eps,\delta}$ and the related uniform-in-time {\sc lln}.

 The difficulty of the mathematical analysis lies on the careful and diligent analysis and appropriate decomposition needed in order to extract the best possible bounds that are also uniform-in-time. The presence of the sampling term complicates the uniform-in-time analysis of the fluctuations.
%Now, mentioning a part of the methodology of the proofs, we first obtain the time-independent bounds for the quantity of interest over finite time horizons (e.g. $\BE[\sup_{0 \le s \le t}|X_t^{\eps,\delta}-x_t|^p]$ in Theorem \ref{T:LLN-UIT}). We then use the fact that the resulting constant is \textit{time-independent} followed by Fatou's lemma to obtain the results for infinite time intervals. The crucial thing that makes the latter possible is that the constant is always independent of time.

%\color{blue} (Literature) \color{black}
In recent decades, the asymptotic behavior of several (multi scale) stochastic systems has been explored by many authors in the form of averaging principle/homogenization. This enables one to obtain simple dynamics of the slowly-varying component by averaging over the quickly varying component. For the fundamental work in this direction, we refer to the seminal papers of Khasminskii \cite{khasminskij1968principle,has1966stochastic}. Since then, a great amount of work has been developed in the form of different limit theorems. To mention a few, one can see, for example, \cite{FreidlinSowers,pardoux2001poisson,
pardoux2003poisson,pardoux2005poisson,FW_RPDS,spiliopoulos2014fluctuation,
rockner2021diffusion,rockner2021averaging}, for the averaging principle and/or fluctuation analysis for {\sc sde} with multiple time scales and multiple small parameters in different asymptotic regimes. For the interaction of Large Deviation Principle (which describes the asymptotic behavior of probabilities of rare events in terms of certain rate functions) with stochastic processes, one can see \cite{veretennikov2000large,Spil-AppMathOptim} and the references therein. Averaging and fluctuation analysis for stochastic partial differential equations are investigated in , e.g., \cite{cerrai2009khasminskii,cerrai2009averaging}.

The averaging and fluctuation analysis for multiscale systems have typically been considered over finite time horizons (i.e., on time intervals $[0,T]$ for fixed $T<\infty$). Recently, \cite{crisan2021uniform,fang2020adaptive} provided sufficient conditions for \textit{uniform-in-time} convergence of error of the Euler scheme for a {\sc sde}. Further, a uniform-in-time averaging principle for a slow-fast fully coupled stochastic system, has been explored in \cite{crisan2022poisson}. For the {\sc ldp} over infinite time horizons, we refer to a recent article \cite{budhiraja2024large}.

%\color{blue} (Organization of manuscript) \color{black}
This paper is organized as follows. In Section \ref{S:Problem-setup}, we present our problem set-up and our main results (Theorems \ref{T:LLN-UIT} and \ref{T:fluctuations-R-1-2}). In the same section, we mention certain regularity assumptions and a sufficient condition, which enable us to accomplish the uniform time analysis. In addition, we present some potential applications of our results and simulation studies that illustrate and complement the theoretical results. Further, Section \ref{S:LLN-Proof} is devoted to the proof of Theorem \ref{T:LLN-UIT} through a series of helpful lemmas. In Section \ref{S:CLT-Proof}, we present the proof of our second main result Theorem \ref{T:fluctuations-R-1-2}. Section \ref{S:Error-Prop-proof} has the proofs of a number of supporting lemmas needed in the proof of Theorem \ref{T:fluctuations-R-1-2} and mainly related to properly controlling the sampling effect.

\subsection*{Notation and conventions}
We list some of the special notations and conventions used throughout this manuscript. The symbol $\triangleq$ is read ``is defined to equal." We denote the set of all positive integers, non-negative integers and real numbers by $\BN$, $\BZ^+,$ and $\BR$, respectively. For any $m \in \BN,$ and a given function $h:\BR \to \BR$, $D^m h(x)$ represents the $m^{\text{th}}$-derivative of $h$ at the point $x\in \BR$; of course, in the higher dimensions $Dh(x)$ stands for the Jacobian matrix. Throughout this manuscript, we use $\pi_\delta(t)$ for $\delta \lfloor {t}/{\delta} \rfloor$ and $\|\cdot\|_{\infty}$ for the sup norm. The set $C_b^1(\BR)$ denotes the collection of all real-valued continuous functions whose first derivatives are bounded. For a random variable $Y$, which has a normal distribution with parameters $\mu$ and $\xi^2,$ we write $Y\sim \mathscr{N}(\mu,\xi^2).$ Throughout the paper, the letter $C$ denotes a positive constant which may depend on various parameters \textit{except for} the time parameter $t$ and the small parameters $\eps$ and $\delta$; the value of $C$ may change from line to line. In the statement of results, the constant $C$ will be denoted with a subscript (like, $C_{\ref{T:LLN-UIT}}$). On many occasions in this paper, we will use the following fundamental inequalities without explicitly mentioning it: for any $n \in \BN, \thinspace p>0$ and positive real numbers $a_1,\cdots a_n,$
\begin{equation*}
\begin{aligned}
a_1 a_2\cdots a_n  \le C(a_1^n+ \cdots
+a_n^n), \quad \text{and}  \quad
(a_1+ \cdots + a_n)^p \le C (a_1^p+ \cdots + a_n^p).
\end{aligned}
\end{equation*}

\section{Problem Statement, Main Results and Application}
\subsection{Problem Statement and Assumptions}\label{S:Problem-setup}
We consider a nonlinear dynamical system affine in control
$\frac{dx}{dt}=f(x)+u, \thinspace x(0)= x_0,$ where $x(t):[0,\infty) \to \BR$ represents the state of the system and $u\in \BR$ is a control input. Throughout this analysis, a state-feedback control law $u=\kappa(x)$ is a priori fixed to get the closed-loop system
\begin{equation}\label{E:det-sys}
\frac{dx}{dt}=f(x)+\kappa(x), \quad x(0)= x_0.
\end{equation}

We now use a sample-and-hold implementation of the control law $u=\kappa(x).$ By this we mean, for a fixed $\delta>0,$ the state $x$ of system \eqref{E:det-sys} is measured at the periodic sampling time instants $k \delta,\thinspace k \in \BZ^+$ and the control is updated according to the feedback control law $u=\kappa(x_{k\delta})$ and is held fixed in equation \eqref{E:det-sys} over the sampling interval $[k \delta, (k+1)\delta)$. In this case, the dynamics of system \eqref{E:det-sys}, in the interval $[k \delta, (k+1)\delta),$ is governed by the differential equation $\frac{dx_t^\delta}{dt}=f(x_t^\delta)+\kappa(x^\delta_{k \delta})$ with the initial conditions $x^\delta_{k \delta}=x^\delta_{k \delta-}$ and $x_{0-}^{\delta}=x_0^\delta=x_0.$ Further, for the analysis purposes, we rewrite the dynamics of $x^\delta_t$ using the
time-discretization function $\pi_\delta(t) \triangleq \delta \lfloor {t}/{\delta} \rfloor, \thinspace t \in [0,\infty)$ to get
\begin{equation}\label{E:Det-sys-delta}
\frac{dx_t^\delta}{dt}=f(x_t^\delta)+\kappa(x^\delta_{\pi_\delta(t)}), \quad x_0^\delta=x_0.
\end{equation}
One can note here that the instantaneous rate $\frac{dx_t^\delta}{dt}$ depends on both the current value $x_t^\delta$ and the most recent sample of the state, that is, $x^\delta_{\pi_\delta(t)}$ and we expect as $\delta\searrow 0$, the dynamics of $x_t^\delta$ converges to $x_t$ solving \eqref{E:det-sys}.

We would now like to explore the situation when the system \eqref{E:Det-sys-delta} is subjected to a small white-noise effect. To make things precise, we assume that the process $W=\{W_t:t\ge 0\}$, defined on a probability space $(\Omega, \mathscr{F},\BP),$ represents a one-dimensional Brownian motion and the parameter $\eps>0$ corresponds to the size of noise. The resulting dynamics of system \eqref{E:Det-sys-delta} is now described by the stochastic process $X^{\eps,\delta}=\{X_t^{\eps,\delta}:t \ge 0\}$ which solves the {\sc sde}
\begin{equation}\label{E:W-S-SDE-With-pert}
dX_t^{\eps,\delta}=\left\{f(X_t^{\eps,\delta})+\kappa(X_{\pi_\delta(t)}^{\eps,\delta})\right\}dt+\eps \sigma(X_t^{\eps,\delta})dW_t, \quad X_0^{\eps,\delta}=x_0.
\end{equation}
%Of course, {\sc sde} \eqref{E:W-S-SDE-With-pert} is a notational form of the stochastic integral equation
%\begin{equation*}
%X_t^{\eps,\delta}=x_0+\int_0^t\left\{f(X_s^{\eps,\delta})+\kappa(X_{\pi_\delta(s)}^{\eps,\delta})\right\}ds+\eps %\int_0^t \sigma(X_s^{\eps,\delta})\thinspace dW_s.
%\end{equation*}

Throughout this paper, the functions $f:\BR \to \BR,$ $\kappa: \BR \to \BR$ and $\sigma: \BR \to \BR$ in model \eqref{E:W-S-SDE-With-pert} are assumed to satisfy the Assumptions \ref{A:Poly-Lip}, \ref{A:Poly-growth}, \ref{A:Poly-Int} below and that they yield a unique and, at least, weak solution to (\ref{E:W-S-SDE-With-pert}).

\begin{assump}\label{A:Poly-Lip}
The mapping $f$ satisfies local polynomial growth Lipschitz continuity condition and the functions $\kappa, \sigma$ are assumed globally Lipschitz continuous, i.e.,
for all $x,y \in \BR,$ there exist positive numbers $q, \xi$ and  $\mu$ such that
\begin{equation}\label{E:Poly-Dissp-Growth}
\begin{aligned}
|f(x)-f(y)| &\le [\xi(|x|^q+|y|^q)+\mu]|x-y|,\\
|\kappa(x)-\kappa(y)| &\le L_{\kappa} |x-y|,\nonumber\\
|\sigma(x)-\sigma(y)| &\le L_{\sigma} |x-y|,
\end{aligned}
\end{equation}
where $L_{\kappa},L_{\sigma}$ are the global Lipschitz constants for $\kappa,\sigma$ respectively. In addition, we assume $f$ satisfies a contractive Lipschitz continuity condition. In particular,
\begin{comment}
we assume that for any given $C<\infty$, there exists $R\in (C,\infty)$ and $\lambda<\infty$ that may depend on $C,R$,  such that for all $|y|\leq C$ and $|x|\geq R$ we have
\begin{equation}\label{E:Poly-Diss-Lip}
\begin{aligned}
(x-y)\cdot (f(x)-f(y))  \le -\lambda |x-y|^2.
\end{aligned}
\end{equation}
\end{comment}
we assume that there is $\lambda>0$ such that for $x,y\in\BR$, we have
\begin{equation}\label{E:Poly-Diss-Lip}
\begin{aligned}
(x-y)\cdot (f(x)-f(y))  \le -\lambda |x-y|^2.
\end{aligned}
\end{equation}
Furthermore, we assume that the constants $\lambda,L_{\kappa}$ are such that $\frac{\lambda}{2}-L_{\kappa}>0$.
\end{assump}
An important consequence of (\ref{E:Poly-Diss-Lip}) is that there exist positive numbers $\alpha$, $\beta$ so that
\begin{align}
x\cdot f(x) & \le -\alpha |x|^2+ \beta \quad \text{for}\quad x\in\BR.\label{E:Poly-diss-f}
\end{align}

\begin{assump}\label{A:Poly-growth}
The functions $\kappa,\sigma$ are uniformly bounded, i.e.,
there exists $\gamma<\infty$ such that
\begin{equation*}
\begin{aligned}
%x\cdot f(x) & \le -\alpha |x|^2+ \beta, \quad \text{for}\quad x\in\BR, \quad and\\
|\sigma(x)| +|\kappa(x)| &\le \gamma, \quad x \in \BR.
\end{aligned}
\end{equation*}
\begin{comment}
The functions $\kappa,\sigma$ are uniformly bounded and the mapping $f$ satisfies the following dissipativity condition:
there exist %$R>0$ large enough and
positive numbers $\alpha$, $\beta$ and $\gamma$ such that
\begin{equation*}
\begin{aligned}
x\cdot f(x) & \le -\alpha |x|^2+ \beta, \quad \text{for}\quad x\in\BR, \quad and\\
|\sigma(x)|^2 +|\kappa(x)| &\le \gamma, \quad x \in \BR.
\end{aligned}
\end{equation*}
\end{comment}
\end{assump}

\begin{assump}\label{A:Poly-Int}
We assume that $f,\kappa$ are twice differentiable and that for $m \in \{1,2\},$ there exists a time-independent positive constant $C$ such that
\begin{equation*}
\sup_{t\ge 0}\int_0^t e^{\int_s^t m[Df(x_u)+D\kappa (x_u)]\thinspace du}\thinspace ds \le C.
\end{equation*}
The second derivative of $\kappa$ is assumed to be uniformly bounded whereas the second derivative of $f$ is assumed to grow polynomially.
\end{assump}

A few remarks on Assumptions \ref{A:Poly-Lip}, \ref{A:Poly-growth} and \ref{A:Poly-Int} are in order.
\begin{remark}
Assumptions \ref{A:Poly-Lip}, \ref{A:Poly-growth} and \ref{A:Poly-Int} on $f$ are satisfied, for example, by the functions $f(x)=-\lambda x$, $f(x)=-x^3-\lambda x$ and bounded, differentiable functions $\kappa$ with sufficiently small Lipschitz constant.
\begin{comment}
The contractive Assumptions \ref{A:Poly-Lip}, \ref{A:Poly-growth} also hold for functions $f$ of the form $f(x)=x-x^{3}$ and in general for functions $f$ that are odd-degree polynomials with negative leading coefficient of the form
\begin{align*}
f(x)=-d |x|^{2m+1}+\sum_{i=0}^{2m}c_{i} x^{i},
\end{align*}
 with $m\in\mathbb{N}$, $d>0$ and $c_{i}\in\BR$.
\end{comment}

We also note that the contractive Lipschitz condition is a consequence of the one-sided Lipschitz condition with negative ``one-sided Lipschitz" constant. In particular, the function $f$ such that for  $x>y$ the relation holds
\begin{align*}
f(x)-f(y)&\leq -\lambda (x-y),
\end{align*}
will also satisfy (\ref{E:Poly-Diss-Lip}). Note that for such $f$ that are also differentiable, we have that $\sup_{x}f'(x)=-\lambda$.
\end{remark}

\begin{remark}
Assumption (\ref{E:Poly-Diss-Lip})  is used in the proof of Theorem \ref{T:LLN-UIT} (which proves the uniform convergence of $X^{\eps,\delta}_{t}$ to $x_{t}$) and is needed in order to control the magnitude  of the term $|X^{\eps,\delta}_{t}-x_{t}|$ for any given $t\in [0,\infty)$.
%Note that by Lemma \ref{L:Poly-p-Moment} and Assumption \ref{A:Poly-growth}, we have that $\sup_{s\in[0,\infty)}|x_{s}|\leq C<\infty$ for an appropriate $C<\infty$. Hence, to control that product term, we only need to know what happens when  $|X^{\eps,\delta}_{s}|$ is large, as for small values of $X^{\eps,\delta}_{s}$ one can use the local assumed Lipschitz property of $f$ in Assumption \ref{A:Poly-Lip}.
\begin{remark}\label{R:Dissipativity-condtion}
Relation (\ref{E:Poly-diss-f}) is used in Lemma \ref{L:Poly-p-Moment} to prove the uniform moment bound for $X^{\eps,\delta}_{t}$ and the uniform bound for $x_{t}$. We note that a simple inspection of the proof of Lemma \ref{L:Poly-p-Moment} shows that the claim holds if we only assume (\ref{E:Poly-diss-f}) to be true for $|x|>R$ for some $R>0$. We chose to use (\ref{E:Poly-diss-f}) as a consequence of (\ref{E:Poly-Diss-Lip}) instead of using this more general condition because, despite our efforts, we could not relax accordingly Assumption (\ref{E:Poly-Diss-Lip}) that is used for Theorem \ref{T:LLN-UIT}. This is due to the fact that in Theorem \ref{T:LLN-UIT}, we need to appropriately control the term $(X^{\eps,\delta}_{t}-x_{t})[f(X^{\eps,\delta}_{t})-f(x_t)]$ for all $t\in[0,\infty)$ and not to just control the behavior for large $|X^{\eps,\delta}_{t}|$. In particular, bounding moments of $|X^{\eps,\delta}_{t}-x_{t}|$ is not enough, we need those moments to go to zero as $\eps,\delta\searrow 0$.
\end{remark}

However, in Subsection \ref{S:Simulation}, we demonstrate our principal results via simulation studies also on examples that do not necessarily satisfy all of the assumptions, demonstrating that our theoretical result is expected to hold broader.
\end{remark}

\begin{remark}
Assumption \ref{A:Poly-Int} is only used in the proof of the uniform-in-time {\sc clt}, Theorem \ref{T:fluctuations-R-1-2}. The proof of Theorem \ref{T:fluctuations-R-1-2} makes it clear, that besides Assumption \ref{A:Poly-Int}, one mainly needs to have the uniform moment bounds and uniform law of large numbers result of Lemma \ref{L:Poly-p-Moment} and Theorem \ref{T:LLN-UIT} respectively and then the proof of the uniform-in-time {\sc clt} goes through. This means that the uniform-in-time {\sc clt} will hold under a proper control of the derivative of the drift functions via Assumption \ref{A:Poly-Int} and any assumptions that guarantee the validity of Lemma \ref{L:Poly-p-Moment} and Theorem \ref{T:LLN-UIT}. The assumption on the existence of the second derivatives of $f$ and $\kappa$ stems from the need to control the fluctuations in the technical lemmas of Subsection \ref{SS:FluctuationsControl}, which we accomplish through appropriate Taylor expansions.
\end{remark}

\subsection{Main Results}

We now state our first main result.
\begin{theorem}(Law of Large Numbers Type Result)\label{T:LLN-UIT}
Let $x_{t}$ and $X_{t}^{\varepsilon,\delta}$ be the solutions of \eqref{E:det-sys} and \eqref{E:W-S-SDE-With-pert}, respectively satisfying Assumptions  \ref{A:Poly-Lip}, \ref{A:Poly-growth}. Then, for any $p \in \BN$ and for all $\eps,\delta>0$ sufficiently small, there exists a time-independent positive constant $C_{\ref{T:LLN-UIT}}$ such that
%\begin{equation}\label{E:LLN-FM}
%\BE\left[\sup_{0\le s \le t}\left|X_s^{\eps,\delta}-x_s\right|\right] \le C(\delta+\eps+\eps \sqrt{\delta}),\quad \text{and}
%\end{equation}
\begin{equation}\label{E:LLN-SM}
\sup_{t \ge 0}\BE \left[|X_t^{\eps,\delta}-x_t|^p\right] \le C_{\ref{T:LLN-UIT}}(\delta^p+\eps^p+{\delta}^{\frac{p}{2}}\eps^p).
\end{equation}
\end{theorem}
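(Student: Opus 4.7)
The plan is to let $Y_t \triangleq X_t^{\eps,\delta}-x_t$ and obtain the bound first for all even exponents $p=2m$, $m\in\BN$, via Itô's formula and a uniform Gronwall argument, then recover odd $p$ via Jensen's inequality. Applying Itô to $Y_t^{2m}$, taking expectations, and using $X_0^{\eps,\delta}=x_0$ gives
\begin{align*}
\BE[Y_t^{2m}] = &\;2m\int_0^t \BE\!\left[Y_s^{2m-1}\bigl(f(X_s^{\eps,\delta})-f(x_s)\bigr)\right]ds + 2m\int_0^t \BE\!\left[Y_s^{2m-1}\bigl(\kappa(X_{\pi_\delta(s)}^{\eps,\delta})-\kappa(x_s)\bigr)\right]ds \\
& + m(2m-1)\eps^2\int_0^t \BE\!\left[Y_s^{2m-2}\sigma^2(X_s^{\eps,\delta})\right]ds.
\end{align*}
For the first integrand, the contractive Lipschitz condition \eqref{E:Poly-Diss-Lip} gives $Y_s^{2m-1}(f(X_s^{\eps,\delta})-f(x_s))\le -\lambda Y_s^{2m}$. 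For the second, I would decompose $\kappa(X_{\pi_\delta(s)}^{\eps,\delta})-\kappa(x_s)=[\kappa(X_{\pi_\delta(s)}^{\eps,\delta})-\kappa(X_s^{\eps,\delta})]+[\kappa(X_s^{\eps,\delta})-\kappa(x_s)]$. The second bracket, by Lipschitz of $\kappa$, contributes at most $L_\kappa \BE[Y_s^{2m}]$ (after bounding $|Y_s|^{2m-1}\cdot|Y_s|$), and the first bracket, the sampling residual, is controlled by Young's inequality with a small parameter $\eta>0$ producing a term $\eta\BE[Y_s^{2m}]+C_\eta\,\BE[|X_s^{\eps,\delta}-X_{\pi_\delta(s)}^{\eps,\delta}|^{2m}]$. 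The diffusion term is bounded by $\gamma^2\eps^2\BE[Y_s^{2m-2}]\le C\eps^{2m}+\eta\BE[Y_s^{2m}]$ after another Young's inequality.

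The next step is a short auxiliary estimate on the sampling increment: for $s\in[k\delta,(k+1)\delta)$, write $X_s^{\eps,\delta}-X_{k\delta}^{\eps,\delta}$ as an integral of the drift plus $\eps$ times a stochastic integral. Using the boundedness of $\kappa,\sigma$ (Assumption \ref{A:Poly-growth}), the polynomial growth of $f$ together with the uniform $p$-moment bound on $X_s^{\eps,\delta}$ from Lemma \ref{L:Poly-p-Moment} cited in the text, and the Burkholder–Davis–Gundy inequality, one obtains the time-independent bound
\begin{equation*}
\sup_{s\ge 0}\BE\bigl[|X_s^{\eps,\delta}-X_{\pi_\delta(s)}^{\eps,\delta}|^{2m}\bigr] \le C\bigl(\delta^{2m}+\eps^{2m}\delta^{m}\bigr).
\end{equation*}
Combining everything and choosing $\eta$ small enough that $\lambda-L_\kappa-\eta>0$ — which is possible precisely because Assumption \ref{A:Poly-Lip} guarantees $\lambda/2-L_\kappa>0$ — yields a differential inequality of the form
\begin{equation*}
\frac{d}{dt}\BE[Y_t^{2m}] \le -2m\theta\,\BE[Y_t^{2m}]+C\bigl(\delta^{2m}+\eps^{2m}+\eps^{2m}\delta^{m}\bigr)
\end{equation*}
with $\theta>0$ independent of $t,\eps,\delta$. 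A standard Gronwall-type argument with $Y_0=0$ then gives $\sup_{t\ge 0}\BE[Y_t^{2m}]\le C(\delta^{2m}+\eps^{2m}+\eps^{2m}\delta^m)$ with the constant $C$ free of $t,\eps,\delta$.

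For an odd $p\in\BN$, pick $m$ so that $2m\ge p$ and apply Jensen: $\BE[|Y_t|^p]\le (\BE[Y_t^{2m}])^{p/(2m)}$; using $(a+b+c)^{p/(2m)}\le C(a^{p/(2m)}+b^{p/(2m)}+c^{p/(2m)})$ reduces this to the claimed bound $C(\delta^p+\eps^p+\eps^p\delta^{p/2})$. The main obstacle I anticipate is the sampling contribution $\kappa(X_{\pi_\delta(t)}^{\eps,\delta})-\kappa(X_t^{\eps,\delta})$: bounding it in a way that does \emph{not} destroy the exponential-in-time contractivity is what forces the quantitative hypothesis $\lambda/2-L_\kappa>0$, because the Young's-inequality splitting loses a factor $\eta$ from the stabilizing $-\lambda$ rate. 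Everything else (bounds on $X_t^{\eps,\delta}$, boundedness of $\sigma,\kappa$, growth of $f$) is routine given the assumptions already in force, and it is precisely this interplay between contractivity of $f$ and smallness of $L_\kappa$ that makes the time-independent constant $C_{\ref{T:LLN-UIT}}$ attainable.
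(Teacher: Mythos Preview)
Your proposal is correct and follows essentially the same route as the paper: It\^{o}'s formula for even powers of $Y_t=X_t^{\eps,\delta}-x_t$, the contractive Lipschitz bound \eqref{E:Poly-Diss-Lip} for the $f$-term, the same splitting of $\kappa(X_{\pi_\delta(s)}^{\eps,\delta})-\kappa(x_s)$, Young's inequality, and the sampling-increment estimate (your auxiliary bound is exactly Lemma~\ref{L:Poly-LLN-Int}). The only cosmetic difference is that the paper builds the exponential weight into the It\^{o} formula by applying it to $e^{tp\lambda/2}|x|^p$, whereas you derive the differential inequality first and then invoke Gronwall---these are equivalent, and your version even makes it transparent that the condition actually needed is $\lambda>L_\kappa$, which is weaker than the stated $\lambda/2>L_\kappa$.
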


This result essentially gives the rate of convergence of the stochastic process $X_t^{\eps,\delta}$ to its deterministic counterpart $x_t$ uniformly in time as $\eps,\delta \searrow 0$. This theorem can be interpreted as a Law of Large Numbers ({\sc lln}) type result. We present the proof of Theorem \ref{T:LLN-UIT} in Section \ref{S:LLN-Proof}.

We next explore the fluctuation analysis of $X_t^{\eps,\delta}$ uniform-in-time about its mean $x_t$. Here, the fluctuation behavior is found to vary, depending on the relative rates at which the two small parameters $\eps,\delta$ tend to zero. To make this precise, we define two different regimes:
\begin{equation}\label{E:c}
\cc \triangleq \lim_{\eps \searrow 0}\delta_\eps/\eps \begin{cases}=0 & \text{Regime 1,}\\ \in (0,\infty) & \text{Regime 2,}\end{cases}
\end{equation}
where, we assume $\delta=\delta_{\eps}$ and $\lim_{\eps \searrow 0}\delta_\eps/\eps$ exists in $[0, \infty).$ For Regimes 1 and 2, we consider the rescaled fluctuation process
\begin{equation}\label{E:Fluc-Proc}
Z^{\eps,\delta}_t \triangleq \frac{X^{\eps,\delta}_t - x_t}{\eps}.
\end{equation}
Of course, the fluctuation process  $Z_t^{\eps,\delta}$ solves the stochastic integral equation \eqref{E:Fluctuation-R-1-2}. We now present our second main result which can be interpreted as a Central Limit Theorem ({\sc clt}) type result. This result focuses on the limiting behavior, uniform-in-time, of the process $Z_t^{\eps,\delta}$ as $\eps,\delta$ approach to zero.
%\begin{theorem}(Central Limit Theorem Type Result)
\begin{theorem}(Central Limit Theorem Type Result)\label{T:fluctuations-R-1-2}
Let $x_{t}$ and $X_{t}^{\varepsilon,\delta}$ be the solutions of \eqref{E:det-sys} and \eqref{E:W-S-SDE-With-pert}, respectively satisfying Assumptions  \ref{A:Poly-Lip}, \ref{A:Poly-growth}, \ref{A:Poly-Int} and the fluctuation process $Z^{\eps,\delta}_t$ be defined in equation \eqref{E:Fluc-Proc}. Suppose that we are in Regime $i \in \{1,2\}$, i.e., $\lim_{\eps \searrow 0}\delta_\eps/\eps = \cc \in [0,\infty)$. %Set $Z^{\eps,\delta}_t \triangleq \frac{X^{\eps,\delta}_t - x(t)}{\eps}$, and
Let $Z=\{Z_t: t \ge 0\}$ be the unique solution of
\begin{equation}\label{E:Limiting-SDE-Fluctuations}
 Z_t=\int_0^t [Df(x_s)+D\kappa(x_s)]Z_s \thinspace ds - \frac{\cc}{2} \int_0^t D \kappa(x_s)[f(x_s)+\kappa(x_s)] \thinspace ds + \int_0^t \sigma(x_s) dW_s.
\end{equation}
Then, for any integer $p \ge 1$, there exists a time-independent positive constant $C_{\ref{T:fluctuations-R-1-2}}$ such that for all sufficiently small $\eps,\delta>0$, we  have
%Then, there exists $\eps_0\in (0,1)$ such that for any $T>0$ and $0<\eps<\eps_0$, we have
\begin{multline}\label{E:FCLT}
\sup_{ t \ge 0}\BE\left[|Z^{\eps,\delta}_t - Z_t|^p\right] =
\frac{1}{\eps^p}\sup_{ t \ge 0}\BE\left[|X^{\eps,\delta}_t - x_t - \eps Z_t |^p \right] \le \\
C_{\ref{T:fluctuations-R-1-2}}\left[\frac{\delta^{2p}}{\eps^p}+\left|\frac{\delta}{\eps}-\cc \right|^p+\left(\frac{\delta^p}{\eps^p}+1 \right)(\delta^p+\eps^p+{\delta}^{\frac{p}{2}}\eps^p)^{\frac{1}{2}}+\delta^{p/2}+\eps^p\right].
%\left(\delta(\cc+1)+\frac{1}{2}\varkappa(\eps)T\right)\left(1+\sup_{0\le t \le T}|x(t)|\right) \\+ (\cc+1)(2\delta+\sqrt{n \delta}+(\eps\sqrt{n}+\delta)T)TK_{\ref{T:fluctuations-R-1-2}}e^{K_{\ref{T:fluctuations-R-1-2}}T}\left(1+\sup_{0\le t \le T}|x(t)|\right)^2,
\end{multline}
%where $\eps_0$ and $\kap(\eps) \searrow 0$ are as in equations \eqref{E:eps0} and \eqref{E:kappa-Levy}, respectively.
\end{theorem}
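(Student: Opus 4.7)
The plan is to analyze $D_t\triangleq Z_t^{\eps,\delta}-Z_t$ via a closed integral equation derived by second-order Taylor expansion, and then apply a variation-of-constants representation built on the fundamental solution $\Phi(t,s)\triangleq\exp\!\bigl(\int_s^t[Df(x_u)+D\kappa(x_u)]\,du\bigr)$ of the linearized dynamics, together with Assumption \ref{A:Poly-Int}, to bound $\BE|D_t|^p$ uniformly in $t$. Starting from the equation
\[
Z_t^{\eps,\delta}=\int_0^t\frac{f(X_s^{\eps,\delta})-f(x_s)}{\eps}\,ds+\int_0^t\frac{\kappa(X_{\pi_\delta(s)}^{\eps,\delta})-\kappa(x_s)}{\eps}\,ds+\int_0^t\sigma(X_s^{\eps,\delta})\,dW_s,
\]
I would Taylor-expand $f$ to second order around $x_s$, expand $\kappa(X_{\pi_\delta(s)}^{\eps,\delta})$ around $x_{\pi_\delta(s)}$, and separately expand $\kappa(x_{\pi_\delta(s)})-\kappa(x_s)$ around $x_s$. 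This isolates the leading linear-in-$D$ term $[Df(x_s)+D\kappa(x_s)]D_s$; the Taylor remainders are controlled in $L^p$ by the polynomial growth of $D^2 f$, the boundedness of $D^2\kappa$, and Lemma \ref{L:Poly-p-Moment} together with Theorem \ref{T:LLN-UIT}. Subtracting \eqref{E:Limiting-SDE-Fluctuations} then yields
\[
D_t=\int_0^t[Df(x_s)+D\kappa(x_s)]D_s\,ds+E_t+\int_0^t[\sigma(X_s^{\eps,\delta})-\sigma(x_s)]\,dW_s,
\]
where $E_t$ is an absolutely continuous drift correction carrying all remainders and the sampling-induced contribution.

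The non-trivial content of $E_t$ is the emergence of the drift $-\tfrac{\cc}{2}D\kappa(x_s)[f(x_s)+\kappa(x_s)]$. Writing $\kappa(x_{\pi_\delta(s)})-\kappa(x_s)=-D\kappa(x_s)(x_s-x_{\pi_\delta(s)})+\tfrac12 D^2\kappa(\xi_s)(x_s-x_{\pi_\delta(s)})^2$ and $x_s-x_{\pi_\delta(s)}=\int_{\pi_\delta(s)}^s[f(x_u)+\kappa(x_u)]\,du$, I would split the integral on $[0,t]$ into the sampling blocks $[k\delta,(k+1)\delta)$, pull the slowly-varying factor $D\kappa(x_s)[f(x_s)+\kappa(x_s)]$ outside each block at cost $O(\delta)$ per block, and apply $\int_{k\delta}^{(k+1)\delta}(s-k\delta)\,ds=\delta^2/2$ to obtain
\[
\tfrac{1}{\eps}\int_0^t[\kappa(x_{\pi_\delta(s)})-\kappa(x_s)]\,ds=-\tfrac{\delta}{2\eps}\int_0^t D\kappa(x_s)[f(x_s)+\kappa(x_s)]\,ds+\text{remainder},
\]
with the remainder being of order $\delta^{2p}/\eps^p+\delta^{p/2}$ in $L^p$ after integration against $\Phi(t,s)$ in the next step. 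Substituting $\cc$ for $\delta/\eps$ then contributes the $|\delta/\eps-\cc|^p$ term.

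Variation of constants gives $D_t=\int_0^t\Phi(t,s)\,dE_s+\int_0^t\Phi(t,s)[\sigma(X_s^{\eps,\delta})-\sigma(x_s)]\,dW_s$. Taking $p$-th moments, applying Jensen/Hölder to the drift integral with weight $\Phi(t,s)$ and Burkholder--Davis--Gundy followed by Hölder to the martingale integral with weight $\Phi(t,s)^2$, and invoking Assumption \ref{A:Poly-Int} (which supplies $\sup_t\int_0^t\Phi(t,s)^m\,ds\le C$ for $m\in\{1,2\}$), reduces the estimate to uniform-in-$s$ bounds on $\BE|dE_s/ds|^p$ and on $\BE|\sigma(X_s^{\eps,\delta})-\sigma(x_s)|^p\le L_\sigma^p\,\BE|X_s^{\eps,\delta}-x_s|^p$. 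The latter is immediate from Theorem \ref{T:LLN-UIT}; the former was collected in the previous step. The various error sources combine into the terms on the right-hand side of \eqref{E:FCLT}.

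The principal obstacle will be the sampling block analysis: a naive per-block estimate generates an error that accumulates linearly in $t$ via summation across $\sim t/\delta$ blocks, and only the exponential weighting $\Phi(t,s)$, whose $L^1$ and $L^2$ integrals in $s$ are kept bounded uniformly in $t$ by Assumption \ref{A:Poly-Int}, suppresses that growth. Executing the block decomposition so that every per-block remainder, after integration against $\Phi(t,s)$, produces a time-independent contribution matching one of the terms in \eqref{E:FCLT} is the technical heart of the proof, and is the fluctuation-level counterpart of the contractive dissipation that drives Theorem \ref{T:LLN-UIT}.
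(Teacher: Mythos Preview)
Your overall architecture---Taylor expansion, variation of constants against $\Phi(t,s)$, block/Riemann-sum analysis of the sampling contribution, and Assumption \ref{A:Poly-Int} to kill time growth---is exactly the paper's. The paper likewise reduces $|Z_t^{\eps,\delta}-Z_t|^p$ to three pieces $I_1,I_2,I_3$ after applying It\^o's formula with $\varphi(t,x)=e^{-\int_0^t[Df+D\kappa](x_u)\,du}x$, and bounds them via Propositions \ref{P:Central-Prop}--\ref{P:I3-Prop-Rem}; your identification of the stochastic piece and its treatment by BDG plus Lipschitz $\sigma$ matches Proposition \ref{P:I2-Prop} verbatim.

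The one substantive difference is the decomposition of the $\kappa$ term. The paper writes
\[
\kappa(X_{\pi_\delta(s)}^{\eps,\delta})-\kappa(x_s)=\bigl[\kappa(X_{\pi_\delta(s)}^{\eps,\delta})-\kappa(X_s^{\eps,\delta})\bigr]+\bigl[\kappa(X_s^{\eps,\delta})-\kappa(x_s)\bigr]
\]
and linearizes both brackets around $x_s$. This cleanly produces $D\kappa(x_s)Z_s^{\eps,\delta}$ (hence $D\kappa(x_s)D_s$ after subtraction) and the sampling term $D\kappa(x_s)(X_{\pi_\delta(s)}^{\eps,\delta}-X_s^{\eps,\delta})/\eps$; the latter is expanded via the SDE for $X^{\eps,\delta}$ into four pieces $\mathsf M_1,\ldots,\mathsf M_4$ (Lemma \ref{L:M-terms}), with $\mathsf M_1,\mathsf M_2$ furnishing the $\tfrac{\cc}{2}$ drift, $\mathsf M_3$ absorbed by the LLN, and $\mathsf M_4$ producing the $\delta^{p/2}$ term. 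Your route---expanding $\kappa(X_{\pi_\delta(s)}^{\eps,\delta})$ around $x_{\pi_\delta(s)}$ and extracting the drift from the deterministic $x_s-x_{\pi_\delta(s)}$---is a legitimate alternative, but your linear term is then $D\kappa(x_{\pi_\delta(s)})Z_{\pi_\delta(s)}^{\eps,\delta}$, not $D\kappa(x_s)Z_s^{\eps,\delta}$. The discrepancy $D\kappa(x_s)\bigl[Z_{\pi_\delta(s)}^{\eps,\delta}-Z_s^{\eps,\delta}\bigr]$ contains precisely the stochastic integral $\int_{\pi_\delta(s)}^s\sigma\,dW$ (the paper's $\mathsf M_4$, yielding $\delta^{p/2}$) and drift corrections of order $(\delta/\eps)$ times LLN errors; you must absorb these into $E_t$, and your sentence ``This isolates the leading linear-in-$D$ term'' skips this step. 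Once you account for it, the two decompositions are equivalent and produce the same error budget.
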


\begin{remark}(Interpretation)
%The result (Theorem \ref{T:fluctuations-R-1-2}) has a twofold interpretation. First, it demonstrates that the paths of the process $Z^{\eps,\delta}_t$ converge to the paths of $Z_t$, uniformly in time and the error is of the specified order.
The result (Theorem \ref{T:fluctuations-R-1-2}) can be interpreted as follows: The expression $\sup_{ t \ge 0}\BE\left[|X^{\eps,\delta}_t - x_t - \eps Z_t|^p\right]$ tends to zero faster than $\eps^p$ does; hence $\sup_{ t \ge 0}\BE\left[|X^{\eps,\delta}_t - x_t - \eps Z_t|^p\right]=o(\eps^p)$. The latter enables us to approximate in the limit the non-Markovian (due to a memory of length $\delta$) process $X_t^{\eps,\delta}$ by the Markov process $x_t+\eps Z_t.$
\end{remark}

\subsection{Assumptions for a generalization of our model}
In this section, we provide the conditions under which our results hold for a generalized version of equation \eqref{E:W-S-SDE-With-pert} in higher dimensions. We consider the {\sc sde} and {\sc ode}
\begin{equation}\label{E:NL-SDE-gen}
 dX_t^{\eps,\delta}=\left\{f(X_t^{\eps,\delta})+g(X_t^{\eps,\delta})\kappa(X_{\pi_\delta(t)}^{\eps,\delta})\right\}dt+\eps \sigma(X_t^{\eps,\delta})dW_t,
 \end{equation}
 \begin{equation}\label{E:NL-ODE-gen}
 \frac{dx_t}{dt}=f(x_t)+g(x_t)\kappa(x_t),
 \end{equation}
where, for $d,n \ge 1$, $f:\BR^n \to \BR^n, \thinspace g: \BR^n \to \BR^{n \times d}, \thinspace \kappa: \BR^n \to \BR^d$ and $\sigma: \BR^n \to \BR^{n \times n}$ are certain regular functions and $W$ is an $n$-dimensional Brownian motion.

The matrix-valued function $\Phi(t)\triangleq e^{\int_0^t [Df(x_s)+D(g\kappa)(x_{s})]\thinspace ds}$  satisfies the matrix differential equations
\begin{equation}\label{E:matrix-eq}
\begin{aligned}
\frac{d}{dt}\Phi(t)&=[Df(x_t)+D(g\kappa)(x_{t})]\Phi(t), \quad \Phi(0)=I.
% \quad \text{and} \\ \frac{d}{dt}\Psi(t)&=Df(x_t)\Psi(t), \quad \Psi(0)=I.
\end{aligned}
\end{equation}

In equation \eqref{E:matrix-eq}, $x_t$ solves {\sc ode} \eqref{E:NL-ODE-gen}, $I$ represents the identity matrix, and $D(g \kappa)(x_t) \triangleq g(x_t)D \kappa(x_t)+ \sum_{i=1}^d Dg_i(x_t) \cdot \kappa_i(x_t)$ for the column vectors $g_i \in \BR^n$ of matrix $g$ and the entries $\kappa_i \in \BR$ of matrix $\kappa$, $i=1, \cdots, d$.

%Let $|\cdot|_F$ represents the Frobenious norm and 

We now provide the sufficient conditions below under which one can get Theorems \ref{T:LLN-UIT} and \ref{T:fluctuations-R-1-2} for model \eqref{E:NL-SDE-gen}, and these conditions can be thought as corresponding to Assumptions \ref{A:Poly-Lip}, \ref{A:Poly-growth} and \ref{A:Poly-Int}. In the following assumptions, let $\langle \cdot , \cdot \rangle$ and $|\cdot|$ represent the standard inner product and Euclidean norm of vectors, respectively. For any matrix $A= [a_{ij}]\in \BR^{n \times n}$, $|A|_F \triangleq \sqrt{\sum_{i,j=1}^na_{ij}^2}$ represents the Frobenius norm. \\
\noindent

\textbf{Assumption (A1)$'$.} The mapping $f$ satisfies the local polynomial growth Lipschitz continuity condition and the functions $\kappa, \sigma$ and $g$ are assumed globally Lipschitz continuous, i.e., for all $x,y \in \BR^n,$ there exist positive numbers $q, \xi, L$ and  $\mu$ such that
\begin{align}
\left|f(x)-f(y)\right| &\le \left[\xi(|x|^q+|y|^q)+\mu\right]|x-y|,\nonumber\\
|\kappa(x)-\kappa(y)|+|\sigma(x)-\sigma(y)|_F+|g(x)-g(y)|_F &\le L |x-y|.\nonumber
\end{align}
Furthermore, we assume $f$ satisfies a contractive Lipschitz continuity. In particular, we assume that there exists  $0<\lambda<\infty$  such that for all $x,y\in\BR^{n}$, we have
\begin{align*}
\langle x-y, f(x)-f(y) \rangle & \le -\lambda |x-y|^2.
\end{align*}

With $L_{g\kappa }$ denoting the Lipschitz constant associated to the function $g\kappa $, we assume that $\frac{\lambda}{2}-L_{g\kappa }>0$.
As in one-dimensional case, a consequence of the contractive Lipschitz conditions is that there exist positive numbers $\alpha$ and $\beta$ such that
\begin{equation*}
\begin{aligned}
\langle x, f(x) \rangle & \le -\alpha |x|^2+ \beta \quad \text{for}\quad x\in\BR^{n}.
\end{aligned}
\end{equation*}

\noindent
\textbf{Assumption (A2)$'$.}
The functions $\kappa,\sigma$ and $g$ are uniformly bounded, i.e., there exists $\gamma<\infty$ such that
\begin{equation*}
\begin{aligned}
%\langle x, f(x) \rangle & \le -\alpha |x|_F^2+ \beta, \quad \text{for}\quad |x|_F >R, \quad and\\
|\kappa(x)|+ |\sigma\sigma^{\top}(x)|_F + |g(x)|_F &\le \gamma, \quad x \in \BR^n.
\end{aligned}
\end{equation*}

\noindent
\textbf{Assumption (A3)$'$.}
For  $m \in \{1,2\},$ there exists a time-independent positive constant $C$ such that
\begin{equation*}
\sup_{t\ge 0}\int_0^t \left[|\Phi(t)\Phi(s)^{-1}|_F\right]^m ds \le C,
\end{equation*}
where, $\Phi(t)\triangleq e^{\int_0^t [Df(x_s)+D(g\kappa)(x_{s})]\thinspace ds}$ is the solution of equation \eqref{E:matrix-eq}.

 Since the idea of the proof of Theorems \ref{T:LLN-UIT} and \ref{T:fluctuations-R-1-2} with Assumptions (A1)$'$, (A2)$'$, (A3)$'$  is same as with Assumptions \ref{A:Poly-Lip}, \ref{A:Poly-growth} and \ref{A:Poly-Int},  for the sake of presentation, we present the proof only in one-dimensional case with $g=1$ in \eqref{E:NL-SDE-gen}.%Since the idea of the proof of Theorems \ref{T:LLN-UIT} and \ref{T:fluctuations-R-1-2} with Assumptions (A1)$'$, (A2)$'$ and (B1)$'$, (B2)$'$, (B3)$'$ is same as with Assumptions \ref{A:Lipschitz-conti}, \ref{A:Boundedness}, \ref{A:Integral-Suff-Cond}, \ref{A:Poly-Lip}, \ref{A:Poly-growth} and \ref{A:Poly-Int}, so for the sake of brevity, we skip the details of the proofs and focus only on one-dimensional case with a particular choice of function $g=1$ in equation \eqref{E:NL-SDE-gen}.

\subsection{An application of Theorem \ref{T:fluctuations-R-1-2}} We provide an application of our main result (Theorem \ref{T:fluctuations-R-1-2}) in Corollary \ref{C:Theorem-App} below. The latter essentially states that the behavior of the stochastic process $X_t^{\eps,\delta}$ is close (uniform-in-time) to a Gaussian process $x_t+\eps Z_t$ with a specified  mean and variance (see equation \eqref{E:Mean-Var}).
\begin{corollary}\label{C:Theorem-App}
Let $X_{t}^{\varepsilon,\delta}$ be the solution of \eqref{E:W-S-SDE-With-pert} and $V_t^\eps \triangleq x_t+\eps Z_t$, where $x_t$ and $Z_t$ solve \eqref{E:det-sys} and \eqref{E:Limiting-SDE-Fluctuations}, respectively. Then, for any $\varphi \in C_b^1(\BR),$ and integer $p \ge 1,$ there exists a positive constant $C_{\ref{C:Theorem-App}}$ (time-independent) such that
\begin{equation*}
\sup_{t\ge 0}\BE\left[|\varphi(X_t^{\eps,\delta})-\varphi(V_t^{\eps}) |^p\right]  \le
 \eps^p C_{\ref{C:Theorem-App}}\left[\frac{\delta^{2p}}{\eps^p}+\left|\frac{\delta}{\eps}-\cc \right|^p+\left(\frac{\delta^p}{\eps^p}+1 \right)(\delta^p+\eps^p+{\delta}^{\frac{p}{2}}\eps^p)^{\frac{1}{2}}+\delta^{p/2}+\eps^p\right].
 %\left[\frac{\delta^{2p}}{\eps^p}+\left|\frac{\delta}{\eps}-\cc \right|^p+\frac{\delta^p}{\eps^p}(\delta^p+\eps^p+{\delta}^{\frac{p}{2}}\eps^p)+\delta^{p/2}+\eps^p+ \left(\delta^{2p}+\eps^{2p}+{\delta}^p\eps^{2p} \right)^{\frac{1}{2}} \right].
\end{equation*}
The stochastic process $V_t^\eps$ is a Gaussian process with mean $\mu_t$ and variance $\xi_t^2$ given by
\begin{equation}\label{E:Mean-Var}
\begin{aligned}
\mu_t&=x_t-\frac{\cc \eps}{2}\int_0^t D \kappa(x_s) e^{\int_s^t[Df(x_u)+D\kappa(x_u)]\thinspace du}[f(x_s)+\kappa(x_s)]\thinspace ds, \\
 \xi_t^2 &=\eps^2\int_0^t e^{2\int_s^t[Df(x_u)+D\kappa(x_u)]\thinspace du}\sigma^2(x_s)\thinspace ds.
\end{aligned}
\end{equation}
\end{corollary}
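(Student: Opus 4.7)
The proof separates naturally into two independent parts, corresponding to the two claims in the statement: the quantitative approximation bound, and the identification of $V_t^\eps$ as a Gaussian process.

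For the approximation bound, the plan is to exploit that $\varphi \in C_b^1(\BR)$ is globally Lipschitz with constant $\|D\varphi\|_\infty$, so that
\begin{equation*}
|\varphi(X_t^{\eps,\delta}) - \varphi(V_t^\eps)|^p \le \|D\varphi\|_\infty^{p}\, |X_t^{\eps,\delta} - x_t - \eps Z_t|^p.
\end{equation*}
Taking expectations, supremising over $t \ge 0$, and invoking Theorem \ref{T:fluctuations-R-1-2} directly yields the required estimate with $C_{\ref{C:Theorem-App}} = \|D\varphi\|_\infty^{p}\, C_{\ref{T:fluctuations-R-1-2}}$. This part involves no new analysis; it is a pure consequence of the uniform-in-time {\sc clt} already established.

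For the Gaussianity claim, the idea is to write down the explicit variation-of-parameters solution of the linear {\sc sde} \eqref{E:Limiting-SDE-Fluctuations}. Setting $\Phi(t) \triangleq e^{\int_0^t [Df(x_u)+D\kappa(x_u)]\thinspace du}$, one checks via It\^o's formula that $Z_t$ admits the closed form
\begin{equation*}
Z_t = -\frac{\cc}{2}\int_0^t \Phi(t)\Phi(s)^{-1} D\kappa(x_s)[f(x_s)+\kappa(x_s)]\thinspace ds + \int_0^t \Phi(t)\Phi(s)^{-1}\sigma(x_s)\thinspace dW_s,
\end{equation*}
where $\Phi(t)\Phi(s)^{-1} = e^{\int_s^t[Df(x_u)+D\kappa(x_u)]\thinspace du}$ in the present one-dimensional setting. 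Because $x_t$ is deterministic, both the drift integral and the integrand of the Wiener integral are deterministic functions of $s$ and $t$; hence the finite-dimensional distributions of $(Z_t)_{t\ge 0}$ are Gaussian, as is the shifted and scaled process $V_t^\eps = x_t + \eps Z_t$.

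It then remains to read off the first two moments. The stochastic integral has mean zero, so $\BE[V_t^\eps] = x_t + \eps\,\BE[Z_t]$ gives precisely the formula for $\mu_t$ in \eqref{E:Mean-Var}. For the variance, It\^o's isometry applied to the Wiener integral in $Z_t$ produces
\begin{equation*}
\mathrm{Var}(V_t^\eps) = \eps^2\,\mathrm{Var}(Z_t) = \eps^2\int_0^t e^{2\int_s^t[Df(x_u)+D\kappa(x_u)]\thinspace du}\sigma^2(x_s)\thinspace ds,
\end{equation*}
matching the expression for $\xi_t^2$. There is no real obstacle in this corollary: the first claim is a one-line consequence of Theorem \ref{T:fluctuations-R-1-2} and the Lipschitz property of $\varphi$, while the second is standard linear-{\sc sde} theory. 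The only mild care needed is in verifying that the Wiener integral defining $Z_t$ is well-defined uniformly in $t$, which follows from Assumption \ref{A:Poly-Int} with $m=2$ together with the uniform boundedness of $\sigma$ from Assumption \ref{A:Poly-growth}.
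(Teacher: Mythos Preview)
Your proposal is correct and takes essentially the same approach as the paper. The paper phrases the first step as an application of Taylor's theorem (mean value form) rather than the Lipschitz property of $\varphi$, but this is the same inequality, and both then invoke Theorem~\ref{T:fluctuations-R-1-2}; the Gaussianity argument via the variation-of-parameters solution of the linear {\sc sde} is likewise identical.
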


\begin{proof}
Using It\^{o}'s formula in equation \eqref{E:Limiting-SDE-Fluctuations} (i.e., a limiting {\sc sde} for the fluctuation process $Z_t^{\eps,\delta}$), we obtain
\begin{equation*}
Z_t=-\frac{\cc}{2}\int_0^t D \kappa(x_s) e^{\int_s^t[Df(x_u)+D\kappa(x_u)]\thinspace du}[f(x_s)+\kappa(x_s)]\thinspace ds + \int_0^t e^{\int_s^t[Df(x_u)+D\kappa(x_u)]\thinspace du}\sigma(x_s)\thinspace dW_s.
\end{equation*}
Now, let $V_t^\eps \triangleq x_t+\eps Z_t$, then, $V_t^\eps \sim \mathscr{N}(\mu_t,\xi_t^2),$ where, the expressions for the mean $\mu_t$ and variance $\xi_t^2$ are defined as follows:
\begin{equation*}
\mu_t=x_t-\frac{\cc \eps}{2}\int_0^t D \kappa(x_s) e^{\int_s^t[Df(x_u)+D\kappa(x_u)]\thinspace du}[f(x_s)+\kappa(x_s)]\thinspace ds, \thinspace  \xi_t^2=\eps^2\int_0^t e^{2\int_s^t[Df(x_u)+D\kappa(x_u)]\thinspace du}\sigma^2(x_s)\thinspace ds.
\end{equation*}
Next, for any $\varphi \in C_b^1(\BR),$ a simple algebra followed by Taylor's theorem yields
\begin{equation*}
\begin{aligned}
\varphi(X_t^{\eps,\delta})=\varphi\left(\frac{X_t^{\eps,\delta}-x_t}{\eps}\eps+x_t\right)=\varphi \left(\eps Z_t^{\eps,\delta}-\eps Z_t + \eps Z_t +x_t \right)
 = \varphi(\eps Z_t +x_t)+ \eps\varphi'(z)[Z_t^{\eps,\delta}-Z_t],
\end{aligned}
\end{equation*}
where $z \in \BR$ is a point lying on the line segment joining $X_t^{\eps,\delta}$ and $x_t+\eps Z_t$. Hence, for any integer $p\ge 1,$ we have
\begin{equation*}
\left|\varphi(X_t^{\eps,\delta})-\varphi(V_t^{\eps})\right|^p \le \eps^p|\varphi'(z)|^p\left|Z_t^{\eps,\delta}-Z_t \right|^p.
\end{equation*}
Finally, using the boundedness of $\varphi'$ followed by Theorem \ref{T:fluctuations-R-1-2}, one can obtain the required result.
\end{proof}

\subsection{Numerical Examples and Simulation}\label{S:Simulation}
This section is devoted to the numerical illustration of the application of our theoretical result (i.e., Corollary \ref{C:Theorem-App}) in the context of a few simple examples.
In Example 1, the functions $f$ and $\kappa$ satisfy the specified assumptions, however, Examples 2, 3 and 4 demonstrate that our main results can be expected to hold when $f$ satisfies the dissipative condition for large values of $x$, or the control function $\kappa$ is not necessarily a bounded function. In this paper, we provide conditions for the generic model under which we could prove the uniform-in-time results. Examples 2, 3 and 4 demonstrate that for concrete models with special structure, one can go beyond the generic assumptions of this paper.
%(\color{red} Add more explanation in support of unbounded $\kappa$. \color{black})
\begin{remark}
It is important to note that our main results are derived under certain assumptions including that the function $f$ satisfies contractive Lipschitz continuity (Equation \eqref{E:Poly-Diss-Lip}) and the control function $\kappa$ is bounded. The significance of these conditions is as follows. Due to the contractive Lipschitz continuity of $f$, in Theorem \ref{T:LLN-UIT}, we are able to demonstrate that the term $|X_s^{\eps,\delta}-x_s|^p$ tends to zero via handling a challenging term $(X^{\eps,\delta}_{t}-x_{t})[f(X^{\eps,\delta}_{t})-f(x_t)]$. We refer to Remark \ref{R:Dissipativity-condtion} for more details. Next,
the boundedness of $\kappa$ is crucial as it is used (for the first-time) in the proof of Lemma 3.1. It helps to control the product involving the sampling term $(X_t^{\eps,\delta})^{p-1}|\kappa(X_{\pi_\delta(t)}^{\eps,\delta}|,\thinspace t \in [0, \infty)$ which is difficult to handle without assuming the boundedness of $\kappa$ in {\sc uit} setting.
\end{remark}

\subsection*{Example 1}
We consider the non-linear system
\begin{equation}\label{E:example-nonlinear-2}
dX_t^{\eps,\delta}=[-(X_t^{\eps,\delta})^3 - X_t^{\eps,\delta}]dt - \frac{1}{1+e^{-X_{\pi_\delta(t)}^{\eps,\delta}}} dt + \eps dW_t,
\end{equation}
where it is easy to check that the functions $f(x)= -x^3-x,\thinspace \sigma(x)=1$ and $\kappa(x) = -\frac{1}{1+e^{-x}}$ satisfy Assumptions \ref{A:Poly-Lip}, \ref{A:Poly-growth} and \ref{A:Poly-Int}. For the Monte Carlo simulation of the quantity $\sup_{t\ge 0}\left|\BE\varphi(X_t^{\eps,\delta})-\BE\varphi(V_t^{\eps}) \right|^p$ over the $300$ sample paths of Brownian motion, we fix the function $\varphi(x)=x$ and the parameters $T=2^7, X_0^{\eps,\delta}=-0.07,1.5$ and $\delta=2\eps.$ Figure \ref{fig:UIT-Poly-21-22} and Table \ref{Table-UIT-Poly-21-22} show that the absolute error decreases as the values of $\eps$ get small.
\begin{figure}
\centering
\begin{subfigure}{.5\textwidth}
  \centering
  \includegraphics[width=.97\linewidth]{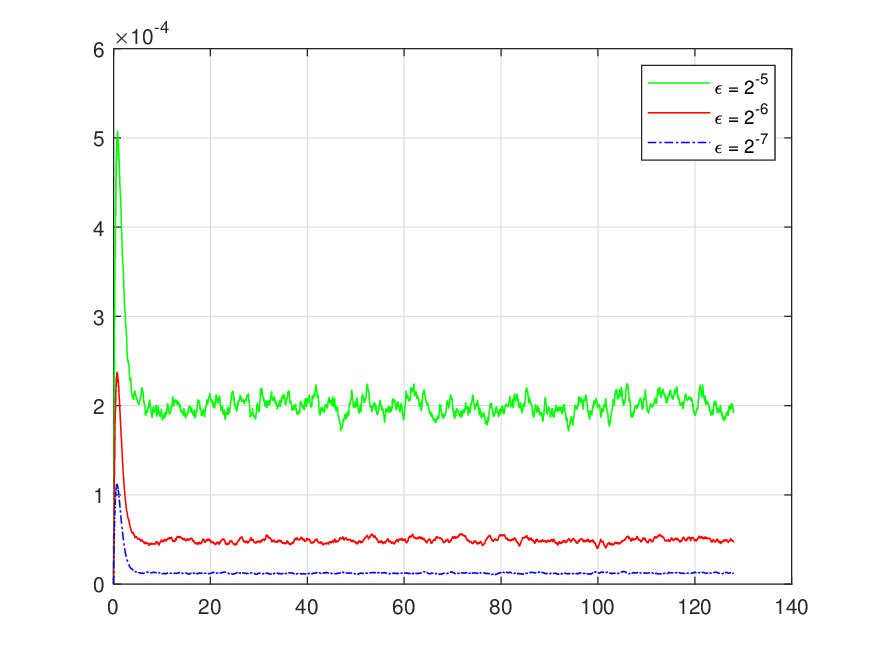}
  \caption{$X_0^{\eps,\delta}=-0.07$}
  \label{fig:sub1}
\end{subfigure}%
\begin{subfigure}{.5\textwidth}
  \centering
  \includegraphics[width=.97\linewidth]{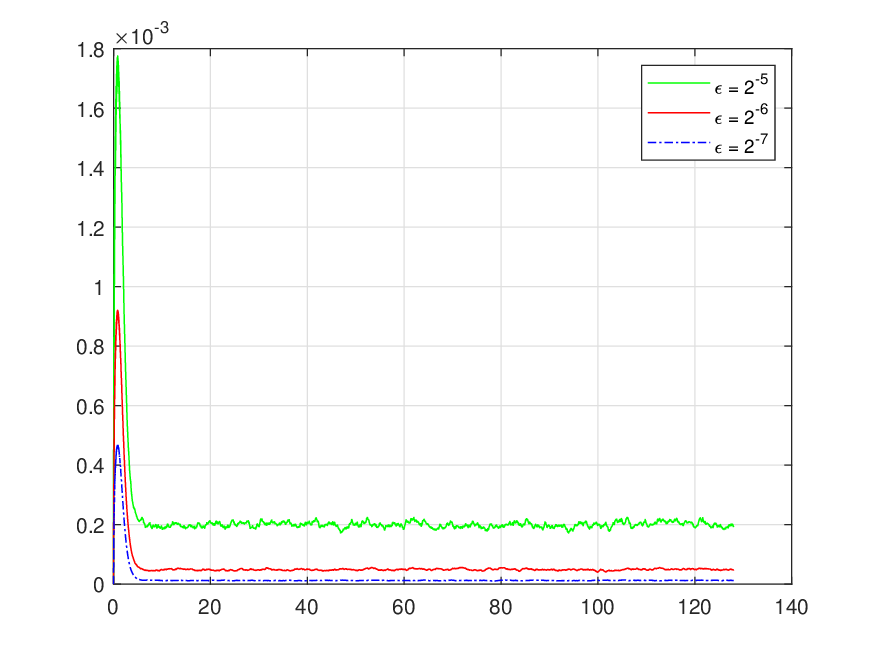}
  \caption{$X_0^{\eps,\delta}=1.5$}
  \label{fig:sub2}
\end{subfigure}
\caption{For the non-linear model \eqref{E:example-nonlinear-2}, varying effect of $\BE[X_T^{\eps,\delta}-x_T-\eps Z_T]$ over $300$ sample paths with different values of $\eps$ and two initial conditions $X_0^{\eps,\delta}=-0.07,1.5$. Here, the error $\BE[X_T^{\eps,\delta}-x_T-\eps Z_T]$ and time are represented on the vertical and horizontal axes, respectively.}
\label{fig:UIT-Poly-21-22}
\end{figure}

\begin{table}[!h]
\begin{center}
\caption{The absolute maximum and minimum values of $\text{Error} \triangleq \BE[X_T^{\eps,\delta}-x_T-\eps Z_T]$ over $300$ sample paths decrease as the values of $\eps$ decrease.}
\label{Table-UIT-Poly-21-22}
\begin{tabular}{ |p{2cm}|p{2cm}||p{3cm}|p{3cm}|  }
 \hline
 \multicolumn{4}{|c|}{Initial condition $X_0^{\eps,\delta}=-0.07$} \\
 \hline
 $\eps$ & $\delta=2\eps$ & $|$Maximum Error$|$ & $|$Minimum Error$|$\\
 \hline
 $2^{-5}$ & $2^{-4}$   & $5.0744 \times 10^{-4}$    & $8.9676 \times 10^{-5}$ \\
 $2^{-6}$ & $2^{-5}$    &   $2.3710 \times 10^{-4}$  & $2.5126 \times 10^{-5}$  \\
 $2^{-7}$  & $2^{-6}$      &   $1.1243 \times 10^{-4}$  & $6.1564 \times 10^{-6}$  \\
  \hline
  \multicolumn{4}{|c|}{Initial condition $X_0^{\eps,\delta}=1.5$} \\
 \hline
% $\eps$ & $\delta=2\eps$ & $|$Maximum Error$|$ & $|$Minimum Error$|$\\
 %\hline
 $2^{-5}$ & $2^{-4}$   & $0.0018$    & $1.7179 \times 10^{-4}$ \\
 $2^{-6}$ & $2^{-5}$    &   $9.2053 \times 10^{-4}$  & $3.9899 \times 10^{-5}$  \\
 $2^{-7}$  & $2^{-6}$      &   $4.6849 \times 10^{-4}$  & $1.0321 \times 10^{-5}$  \\
 \hline
\end{tabular}
\end{center}
\end{table}
%\begin{figure}
%\centering
%\begin{subfigure}{.5\textwidth}
%  \centering
%  \includegraphics[width=.97\linewidth]{}
%  \caption{$\eps=2^{-5}$}
%  \label{fig:sub1}
%\end{subfigure}%
%\begin{subfigure}{.5\textwidth}
%  \centering
%  \includegraphics[width=.97\linewidth]{}
%  \caption{$\eps=2^{-3}$}
%  \label{fig:sub2}
%\end{subfigure}
%\caption{Sample paths of the components $X_i\triangleq X_i^{\eps,\delta}(t),S_i\triangleq S_i^{\eps}(t)\triangleq x_i(t)+\eps Z_i(t)$ and $\text{error}_i \triangleq X_i-S_i$ for $i=1,2$ and $\eps=2^{-5}$ and $\eps=2^{-3}.$}
%\label{fig:test}
%\end{figure}

\subsection*{Example 2}
We consider the {\sc sde}
\begin{equation}\label{E:example-nonlinear-1}
dX_t^{\eps,\delta}=[-(X_t^{\eps,\delta})^3+X_t^{\eps,\delta}]dt - \frac{1}{1+e^{-X_{\pi_\delta(t)}^{\eps,\delta}}} dt + \eps dW_t.
\end{equation}
%Of course, the functions $f(x)=-x^3+x,\thinspace \sigma(x)=1$ and $\kappa(x)=-\frac{1}{1+e^{-x}}$ satisfy Assumptions \ref{A:Poly-Lip},\ref{A:Poly-growth},\ref{A:Poly-Int}.
In this case, we illustrate the varying effect of $\eps$ (noise size) on the quantity $\sup_{t\ge 0}\left|\BE\varphi(X_t^{\eps,\delta})-\BE\varphi(V_t^{\eps}) \right|^p$ for the particular function $\varphi(x)=x, \thinspace p=1$ using Monte Carlo simulation over $300$ sample paths of Brownian motion. The process $V_t^\eps$ is defined as $x_t+\eps Z_t.$ We fix the parameters $T=2^7, X_0^{\eps,\delta}=-0.07,1.5$ and $\delta=2\eps.$ For these fixed parameters, Figure \ref{fig:UIT-Poly-11-12} and Table \ref{Table-UIT-Poly-11-12} demonstrate that as the values of $\eps$ decrease, their corresponding errors (which is defined as the mean of $[X_T^{\eps,\delta}-x_T-\eps Z_T]$ over the specified number of sample paths) also decrease.
\begin{figure}
\centering
\begin{subfigure}{.5\textwidth}
  \centering
  \includegraphics[width=.97\linewidth]{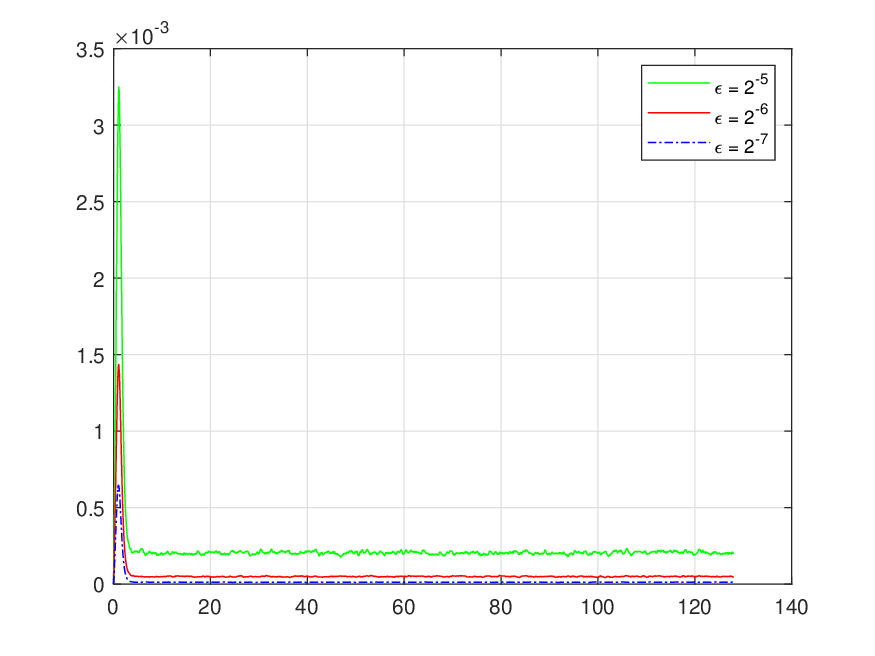}
  \caption{$X_0^{\eps,\delta}=-0.07$}
  \label{fig:sub1}
\end{subfigure}%
\begin{subfigure}{.5\textwidth}
  \centering
  \includegraphics[width=.97\linewidth]{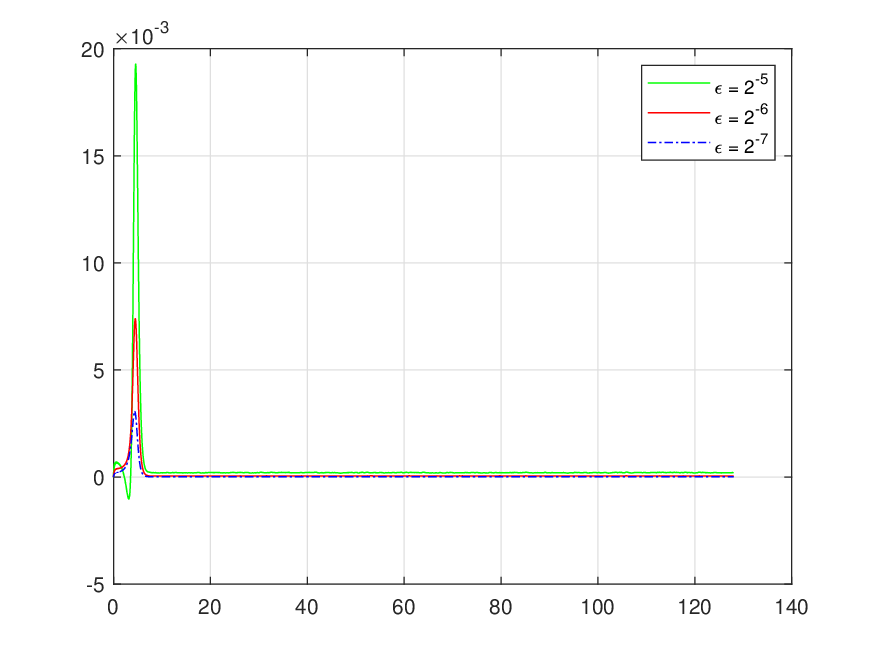}
  \caption{$X_0^{\eps,\delta}=1.5$}
  \label{fig:sub2}
\end{subfigure}
\caption{For the non-linear model \eqref{E:example-nonlinear-1}, varying effect of $\BE[X_T^{\eps,\delta}-x_T-\eps Z_T]$ over $300$ sample paths with different values of $\eps$ and two initial conditions $X_0^{\eps,\delta}=-0.07,1.5$. 
%Here, the error $\BE[X_T^{\eps,\delta}-x_T-\eps Z_T]$ and time are represented on the vertical and horizontal axes, respectively.
}
\label{fig:UIT-Poly-11-12}
\end{figure}

\begin{table}[!h]
\begin{center}
\caption{The absolute maximum and minimum values of $\text{Error} \triangleq \BE[X_T^{\eps,\delta}-x_T-\eps Z_T]$ over $300$ sample paths decrease as the values of $\eps$ decrease.}
\label{Table-UIT-Poly-11-12}
\begin{tabular}{ |p{2cm}|p{2cm}||p{3cm}|p{3cm}|  }
 \hline
 \multicolumn{4}{|c|}{Initial condition $X_0^{\eps,\delta}=-0.07$} \\
 \hline
 $\eps$ & $\delta=2\eps$ & $|$Maximum Error$|$ & $|$Minimum Error$|$\\
 \hline
 $2^{-5}$ & $2^{-4}$   & $0.0032$    & $1.346 \times 10^{-4}$ \\
 $2^{-6}$ & $2^{-5}$    &   $0.0014$  & $3.366 \times 10^{-5}$  \\
 $2^{-7}$  & $2^{-6}$      &   $6.496 \times 10^{-4}$  & $8.4150 \times 10^{-6}$  \\
  \hline
  \multicolumn{4}{|c|}{Initial condition $X_0^{\eps,\delta}=1.5$} \\
 \hline
% $\eps$ & $\delta=2\eps$ & $|$Maximum Error$|$ & $|$Minimum Error$|$\\
 %\hline
 $2^{-5}$ & $2^{-4}$   & $0.0193$    & $0.0010$ \\
 $2^{-6}$ & $2^{-5}$    &   $0.0074$  & $4.2751 \times 10^{-5}$  \\
 $2^{-7}$  & $2^{-6}$      &   $0.0031$  & $1.0880 \times 10^{-5}$  \\
 \hline
\end{tabular}
\end{center}
\end{table}
%$T=2^7; M=300; Xzero = 1.5$
%\begin{table}[!h]
%\begin{center}
%\caption{The absolute maximum and minimum values of $\text{Error} \triangleq \BE[X_T^{\eps,\delta}-x_T-\eps Z_T]$ over $300$ sample paths decrease as the values of $\eps$ decrease.}
%\label{Table-linear-Poly1}
%\begin{tabular}{ |p{2cm}|p{2cm}||p{3cm}|p{3cm}|  }
%% \hline
%% \multicolumn{4}{|c|}{Error evaluation} \\
% \hline
% $\eps$ & $\delta=2\eps$ & $|$Maximum Error$|$ & $|$Minimum Error$|$\\
% \hline
% $2^{-5}$ & $2^{-4}$   & $0.0193$    & $0.0010$ \\
% $2^{-6}$ & $2^{-4}$    &   $0.0074$  & $4.2751 \times 10^{-5}$  \\
% $2^{-7}$  & $2^{-6}$      &   $0.0031$  & $1.0880 \times 10^{-5}$  \\
%  \hline
%\end{tabular}
%\end{center}
%\end{table}
%\begin{figure}[h!]
%\begin{center}
%\includegraphics[height=6cm,width=9.6cm]{}
%\caption{For the linear model, varying effect of $\BE[X_T^{\eps,\delta}-x_T-\eps Z_T]$ over $300$ sample paths with different values of $\eps$. Here, the error $\BE[X_T^{\eps,\delta}-x_T-\eps Z_T]$ and time are represented on the vertical and horizontal axes, respectively.}\label{F:Error-linear}
%\end{center}
%\end{figure}

\subsection*{Example 3}
We consider the {\sc sde}
\begin{equation}\label{E:example-linear-1}
dX_t^{\eps,\delta}=-3X_t^{\eps,\delta}dt -0.3166 X_{\pi_\delta(t)}^{\eps,\delta}dt + \eps dW_t.
\end{equation}
Of course, the functions $f(x)=-3x, \thinspace \sigma(x)=1$ satisfy (\ref{E:Poly-Diss-Lip}) and Assumption \ref{A:Poly-growth} and the function $\kappa(x)=-0.3166x$ has linear growth (not necessarily bounded). In this case, we illustrate the varying effect of $\eps$ (noise size) on the quantity $\sup_{t\ge 0}\left|\BE\varphi(X_t^{\eps,\delta})-\BE\varphi(V_t^{\eps}) \right|^p$ for the particular function $\varphi(x)=x, \thinspace p=1$ using Monte Carlo simulation over $300$ sample paths of Brownian motion. The process $V_t^\eps$ is defined as $x_t+\eps Z_t.$ We fix the parameters $T=2^7, X_0^{\eps,\delta}=0.1$ and $\delta=2\eps.$ For these fixed parameters, Figure \ref{F:Error-linear} and Table \ref{Table-linear} demonstrate that as the values of $\eps$ decrease, their corresponding errors (which is defined as the mean of $[X_T^{\eps,\delta}-x_T-\eps Z_T]$ over the specified number of sample paths) also decrease.

\begin{figure}[h!]
\begin{center}
\includegraphics[height=6cm,width=9.6cm]{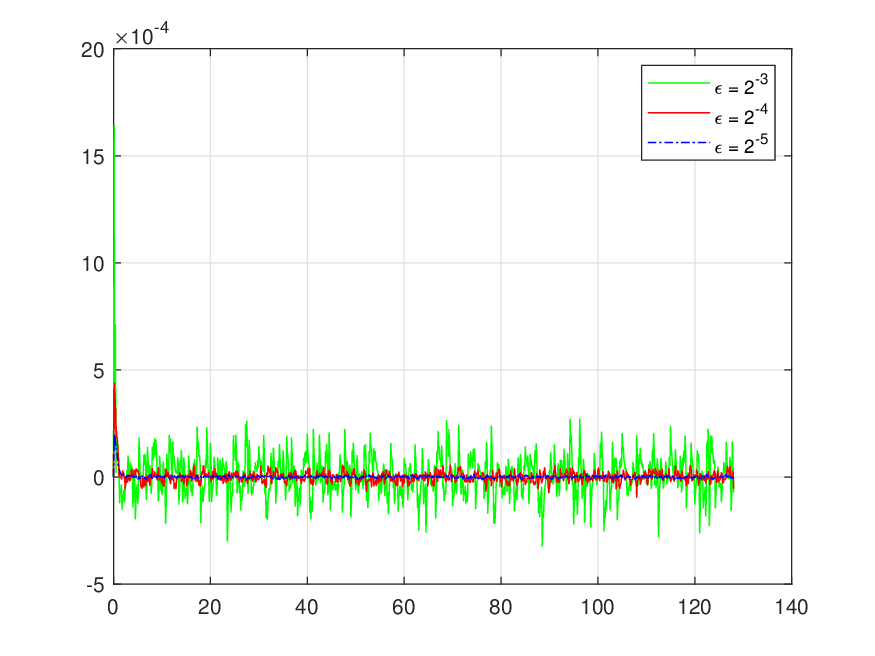}
\caption{For the linear model \eqref{E:example-linear-1}, varying effect of $\BE[X_T^{\eps,\delta}-x_T-\eps Z_T]$ over $300$ sample paths with different values of $\eps$. Here, the error $\BE[X_T^{\eps,\delta}-x_T-\eps Z_T]$ and time are represented on the vertical and horizontal axes, respectively.}\label{F:Error-linear}
\end{center}
\end{figure}

%%%%%%%%%%%%%%%%%%%%%%%%%%%%%%%
\begin{table}[!h]
\begin{center}
\caption{The absolute maximum and minimum values of $\text{Error} \triangleq \BE[X_T^{\eps,\delta}-x_T-\eps Z_T]$ over $300$ sample paths decrease as the values of $\eps$ decrease.}
\label{Table-linear}
\begin{tabular}{ |p{2cm}|p{2cm}||p{3cm}|p{3cm}|  }
% \hline
% \multicolumn{4}{|c|}{Error evaluation} \\
 \hline
 $\eps$ & $\delta=2\eps$ & $|$Maximum Error$|$ & $|$Minimum Error$|$\\
 \hline
 $2^{-3}$ & $2^{-2}$   & $0.00164$    & $3.2175 \times 10^{-4}$ \\
 $2^{-4}$ & $2^{-3}$    &   $4.3641 \times 10^{-4}$  & $9.4164 \times 10^{-5}$  \\
 $2^{-5}$  & $2^{-4}$      &   $1.9649 \times 10^{-4}$  & $1.8798 \times 10^{-5}$  \\
  \hline
\end{tabular}
\end{center}
\end{table}
%\caption{Error; one-dimensional case}\label{F:Error-linear}

\subsection*{Example 4}
We consider the non-linear system
\begin{equation}\label{E:example-linear-2}
dX_t^{\eps,\delta}=\left\{f(X_t^{\eps,\delta})+\kappa(X_{\pi_\delta(t)}^{\eps,\delta})\right\}dt+\eps dW_t, \quad X_0^{\eps,\delta}=0.1
\end{equation}
for the functions $f(x) = \frac{\sin x}{1+x^2}-3x,\thinspace \sigma(x)=1$ and $\kappa(x)= \frac{1}{1+e^{-x}}-5x.$ It is easy to check that the mappings $f$ and $\kappa$ satisfy Assumption \ref{A:Poly-Int} as $Df<0$ and $D \kappa<0.$ For the Monte Carlo simulation of the quantity $\sup_{t\ge 0}\left|\BE\varphi(X_t^{\eps,\delta})-\BE\varphi(V_t^{\eps}) \right|^p$ over the $300$ sample paths of Brownian motion, we fix the function $\varphi(x)=x$ and the parameters $T=2^7, X_0^{\eps,\delta}=0.1$ and $\delta=2\eps.$ Figure \ref{F:Error-nonlinear} and Table \ref{Table-nonlinear} show that the absolute error decreases as the values of $\eps$ get small.

\begin{figure}[h!]
\begin{center}
\includegraphics[height=6cm,width=9.6cm]{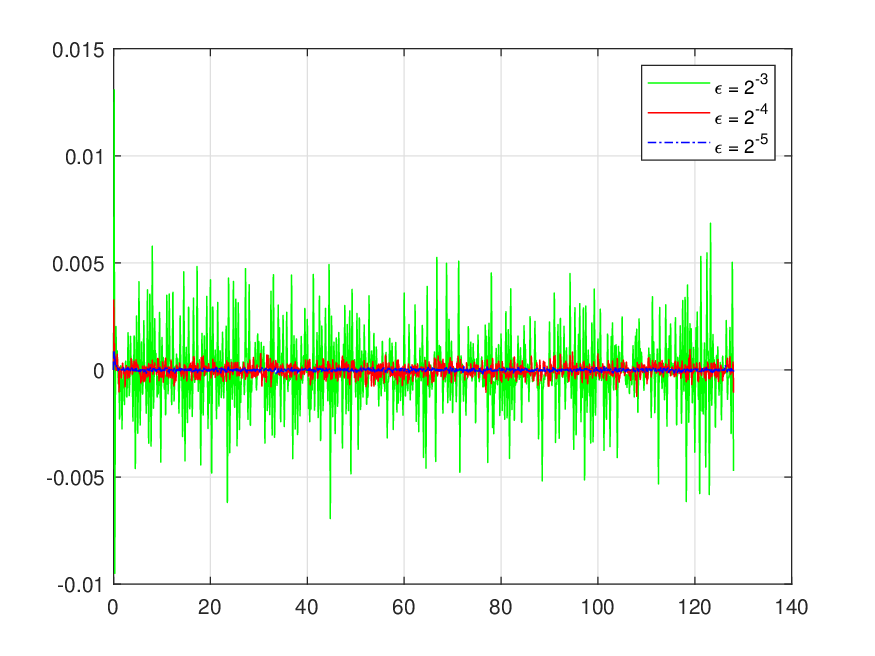}
\caption{For the non-linear model \eqref{E:example-linear-2}, varying effect of $\BE[X_T^{\eps,\delta}-x_T-\eps Z_T]$ over $300$ sample paths with different values of $\eps$. Here, the error $\BE[X_T^{\eps,\delta}-x_T-\eps Z_T]$ and time are represented on the vertical and horizontal axes, respectively.}\label{F:Error-nonlinear}
\end{center}
\end{figure}

\begin{table}[!h]
\begin{center}
\caption{The absolute maximum and minimum values of $\text{Error} \triangleq \BE[X_T^{\eps,\delta}-x_T-\eps Z_T]$ over $300$ sample paths decrease as the values of $\eps$ decrease.}
\label{Table-nonlinear}
\begin{tabular}{|p{2cm}|p{2cm}||p{3cm}|p{3cm}|}
 \hline
 %\multicolumn{4}{|c|}{Error evaluation} \\
 %\hline
 $\eps$ & $\delta=2\eps$ & $|$Maximum Error$|$ & $|$Minimum Error$|$\\
 \hline
 $2^{-3}$ & $2^{-2}$   & $0.0131$    & $0.0095$ \\
 $2^{-4}$ & $2^{-3}$    &   $0.0033$  & $0.0012$  \\
 $2^{-5}$  & $2^{-4}$      &   $9.2848 \times 10^{-4}$  & $2.0363 \times 10^{-4}$  \\

  \hline
\end{tabular}
\end{center}
\end{table}

%%%%%%%%%%%%%%%%%%%%%%%%%%%%%5
%%%%%%%%%%%%%%%%%%%%%%%%%%%%%%
%%%%%%%%%%%%%%%%%%%%%%%%%%%%%%%5
%Polynomial simple paths

%\begin{figure}[h!]
%\begin{center}
%\includegraphics[height=6cm,width=9.6cm]{Polynomial0.007}
%\caption{Error with the function $f(x)=x-x^3$ with the mentioned initial condition, $\eps=2^{-4}$}\label{F:Error}
%\end{center}
%\end{figure}
%
%\begin{figure}[h!]
%\begin{center}
%\includegraphics[height=6cm,width=9.6cm]{poly0.007}
%\caption{Error with the function $f(x)=x-x^3$ with the mentioned initial condition, $\eps=2^{-6}$}\label{F:Error}
%\end{center}
%\end{figure}
%\begin{figure}[h!]
%\begin{center}
%\includegraphics[height=6cm,width=9.6cm]{poly---0.007}
%\caption{Error with the function $f(x)=x-x^3$ with the mentioned initial condition, $\eps=2^{-4}$}\label{F:Error}
%\end{center}
%\end{figure}
%\begin{figure}[h!]
%\begin{center}
%\includegraphics[height=6cm,width=9.6cm]{Poly0.76}
%\caption{Error with the function $f(x)=x-x^3$ with the mentioned initial condition, $\eps=2^{-4}$}\label{F:Error}
%\end{center}
%\end{figure}
%\begin{figure}[h!]
%\begin{center}
%\includegraphics[height=6cm,width=9.6cm]{Poly2.2}
%\caption{Error with the function $f(x)=x-x^3$ with the mentioned initial condition, $\eps=2^{-4}$}\label{F:Error}
%\end{center}
%\end{figure}

%%%%%%%%%%%%%%%%%%%%%%%%%%%%%%%%%
%%%%%%%%%%%%%%%%%%%%%%%%%%%%%%%%%%%
%%%%%%%%%%%%%%%%%%%%%%%%%%%%%%%%%%

%\color{blue}
%\section{Uniform time analysis using Ito formula}
\section{{\sc lln} Type Result: Proof of Theorem \ref{T:LLN-UIT}}\label{S:LLN-Proof}

In this section, we  prove Theorem \ref{T:LLN-UIT} ({\sc lln} Type Result) under Assumptions \ref{A:Poly-Lip} and \ref{A:Poly-growth}. The main building blocks of the proof are Lemmas \ref{L:Poly-p-Moment} and \ref{L:Poly-LLN-Int} below. Here, Lemma \ref{L:Poly-p-Moment} gives the uniform-in-time bound for the quantities $\BE[|X_t^{\eps,\delta}|^p]$ and $x_t$. Lemma \ref{L:Poly-LLN-Int} deals with the quantity $\BE|X_{\pi_\delta(s)}^{\eps,\delta}-X_s^{\eps,\delta}|^p$. In both the results, the generic constant $C$ is always time-independent.

\begin{lemma}\label{L:Poly-p-Moment}
Let $X_t^{\eps,\delta}$ and $x_{t}$ be the solutions of \eqref{E:W-S-SDE-With-pert} and \eqref{E:det-sys} respectively satisfying Assumptions \ref{A:Poly-Lip} and \ref{A:Poly-growth}. Then, for any $p \ge 1$, there exists a positive constant $C_{\ref{L:Poly-p-Moment}}$ such that
\begin{equation*}
\sup_{t \ge 0}\BE\left[|X_t^{\eps,\delta}|^p\right] +\sup_{t \ge 0}|x_{t}|^{p} \le C_{\ref{L:Poly-p-Moment}}.
\end{equation*}
\end{lemma}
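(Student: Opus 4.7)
The plan is to derive a differential inequality of the form $\frac{d}{dt}\BE|X_t^{\eps,\delta}|^p \le -c \BE|X_t^{\eps,\delta}|^p + C$, uniformly in $\eps,\delta$, from which the uniform-in-time bound follows by integration. The bound on $|x_t|^p$ will be obtained by exactly the same argument with $\eps=0$ and no sampling effect.

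First, I would handle the deterministic claim. Differentiating $|x_t|^p$ along the ODE \eqref{E:det-sys} gives
\begin{equation*}
\frac{d}{dt}|x_t|^p = p|x_t|^{p-2} x_t \cdot [f(x_t)+\kappa(x_t)].
\end{equation*}
Using (\ref{E:Poly-diss-f}) to bound $x_t \cdot f(x_t) \le -\alpha |x_t|^2 + \beta$, the boundedness $|\kappa(x_t)|\le \gamma$ from Assumption \ref{A:Poly-growth}, and Young's inequality to absorb the mixed terms $|x_t|^{p-2}$ and $|x_t|^{p-1}$ into a small fraction of $|x_t|^p$ plus a constant, one obtains
$\frac{d}{dt}|x_t|^p \le -c |x_t|^p + C$ for some $c>0$, which yields $\sup_{t\ge 0}|x_t|^p \le |x_0|^p + C/c$.

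Second, for the stochastic bound, assume first that $p\ge 2$ is an even integer (the remaining cases follow by Jensen). Apply It\^o's formula to $|X_t^{\eps,\delta}|^p$ and take expectations. The martingale term vanishes, and we get
\begin{equation*}
\frac{d}{dt}\BE|X_t^{\eps,\delta}|^p = \BE\left[p|X_t^{\eps,\delta}|^{p-2} X_t^{\eps,\delta}\cdot f(X_t^{\eps,\delta})\right] + \BE\left[p|X_t^{\eps,\delta}|^{p-2} X_t^{\eps,\delta}\cdot \kappa(X_{\pi_\delta(t)}^{\eps,\delta})\right] + \frac{p(p-1)}{2}\eps^2\BE\left[|X_t^{\eps,\delta}|^{p-2}\sigma^2(X_t^{\eps,\delta})\right].
\end{equation*}
By (\ref{E:Poly-diss-f}), the first term is bounded by $-\alpha p\BE|X_t^{\eps,\delta}|^p + \beta p\BE|X_t^{\eps,\delta}|^{p-2}$. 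The sampling term is harmless here because $|\kappa(X_{\pi_\delta(t)}^{\eps,\delta})|\le \gamma$ uniformly regardless of the argument, so it is controlled by $p\gamma\BE|X_t^{\eps,\delta}|^{p-1}$. The diffusion term is controlled by $\frac{p(p-1)}{2}\eps^2\gamma\BE|X_t^{\eps,\delta}|^{p-2}$ using Assumption \ref{A:Poly-growth}. Applying Young's inequality of the form $a^{p-k}\le \eta a^p + C_\eta$ to the $|X_t^{\eps,\delta}|^{p-1}$ and $|X_t^{\eps,\delta}|^{p-2}$ terms with $\eta$ chosen small enough that the negative drift $-\alpha p\BE|X_t^{\eps,\delta}|^p$ still dominates, we obtain
\begin{equation*}
\frac{d}{dt}\BE|X_t^{\eps,\delta}|^p \le -c\BE|X_t^{\eps,\delta}|^p + C,
\end{equation*}
for some $c>0$ and $C<\infty$ independent of $t,\eps,\delta$. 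Gronwall's inequality then gives $\BE|X_t^{\eps,\delta}|^p \le |x_0|^p e^{-ct} + C/c$, which is uniformly bounded in $t$. For non-integer $p$ or odd $p$, Jensen's inequality reduces to the even integer case.

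The main obstacle is precisely the delicate use of Young's inequality to ensure that the lower-order polynomial terms in $|X_t^{\eps,\delta}|$ do not destroy the negative drift supplied by the dissipativity condition (\ref{E:Poly-diss-f}). A minor technical point is that $x\mapsto |x|^p$ is not $C^2$ at the origin for non-even $p$, which is why one restricts to even integers and uses Jensen to recover the remaining exponents; alternatively, one can apply It\^o to $(1+|X_t^{\eps,\delta}|^2)^{p/2}$ and carry out the same estimates, producing identical conclusions up to harmless constants.
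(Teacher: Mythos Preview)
Your proof is correct and follows essentially the same approach as the paper: reduce to even $p$, apply It\^o's formula, use the dissipativity bound \eqref{E:Poly-diss-f} for the $f$-term, the uniform bound $|\kappa|\le\gamma$ for the sampling term, boundedness of $\sigma$ for the It\^o correction, and Young's inequality to absorb the lower-order powers into the negative drift. The only cosmetic difference is that the paper builds the exponential weight into the test function (applying It\^o to $e^{\frac{tp\alpha}{2}}|x|^p$) rather than deriving a differential inequality and invoking Gronwall, but these are equivalent.
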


\begin{lemma}\label{L:Poly-LLN-Int}
Suppose that $X_t^{\eps,\delta}$ is the solution of \eqref{E:W-S-SDE-With-pert} satisfying Assumptions \ref{A:Poly-Lip} and \ref{A:Poly-growth}. Then, for any $\eps,\delta>0$ and even $p$, there exists a positive constant $C_{\ref{L:Poly-LLN-Int}}$ such that
\begin{equation*}
\sup_{t \ge 0}\BE\left[|X_{\pi_\delta(t)}^{\eps,\delta}-X_t^{\eps,\delta}|^p \right] \le C_{\ref{L:Poly-LLN-Int}}[\delta^p+ \eps^p\delta^{\frac{p}{2}}].
\end{equation*}
\end{lemma}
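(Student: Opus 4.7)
The starting point is the obvious integral representation obtained by subtracting the SDE \eqref{E:W-S-SDE-With-pert} evaluated at $\pi_\delta(t)$ from the one at $t$, namely
\begin{equation*}
X_t^{\eps,\delta} - X_{\pi_\delta(t)}^{\eps,\delta} = \int_{\pi_\delta(t)}^t \!\!\left\{f(X_u^{\eps,\delta}) + \kappa(X_{\pi_\delta(u)}^{\eps,\delta})\right\} du + \eps \int_{\pi_\delta(t)}^t \sigma(X_u^{\eps,\delta})\,dW_u.
\end{equation*}
Since $t - \pi_\delta(t) \le \delta$, the plan is to raise to the $p$-th power, take expectation, and control the drift and the stochastic integral separately; each will produce one of the two terms in the claimed bound.

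For the drift, Jensen's (or H\"older's) inequality on the interval $[\pi_\delta(t),t]$ of length at most $\delta$ gives
\begin{equation*}
\BE\left|\int_{\pi_\delta(t)}^t \!\!\{f(X_u^{\eps,\delta}) + \kappa(X_{\pi_\delta(u)}^{\eps,\delta})\}\,du\right|^p \le C\delta^{p-1} \int_{\pi_\delta(t)}^t \BE\!\left[|f(X_u^{\eps,\delta})|^p + |\kappa(X_{\pi_\delta(u)}^{\eps,\delta})|^p\right] du.
\end{equation*}
The $\kappa$-term is trivially bounded by $\gamma^p$ via Assumption \ref{A:Poly-growth}. For $f$, applying the polynomial Lipschitz bound of Assumption \ref{A:Poly-Lip} with $y=0$ yields $|f(x)|\le |f(0)|+(\xi|x|^{q}+\mu)|x|$, so $|f(x)|^p \le C(1+|x|^{(q+1)p})$; invoking Lemma \ref{L:Poly-p-Moment} at the exponent $(q+1)p$ then produces a time-independent bound on $\BE|f(X_u^{\eps,\delta})|^p$. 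Altogether the drift contribution is dominated by $C\delta^p$, uniformly in $t$.

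For the diffusive piece, the Burkholder--Davis--Gundy inequality followed by Jensen and the boundedness of $\sigma$ (Assumption \ref{A:Poly-growth}) yields
\begin{equation*}
\BE\left|\eps \int_{\pi_\delta(t)}^t \sigma(X_u^{\eps,\delta})\,dW_u\right|^p \le C\eps^p\, \BE\!\left(\int_{\pi_\delta(t)}^t \sigma^2(X_u^{\eps,\delta})\,du\right)^{p/2} \le C\eps^p \delta^{p/2}.
\end{equation*}
Combining both estimates gives the desired $C(\delta^p + \eps^p \delta^{p/2})$ bound, uniformly in $t$.

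The only non-routine ingredient is the time-independent bound on $\BE|f(X_u^{\eps,\delta})|^p$, which would blow up in $t$ in a naive Gronwall-based analysis of \eqref{E:W-S-SDE-With-pert}; this is exactly where the contractivity of Assumption \ref{A:Poly-Lip} enters, through Lemma \ref{L:Poly-p-Moment}. The evenness of $p$ stated in the lemma plays no real role in this argument (if it helps absorb absolute values elsewhere it is harmless here); for odd $p$ one may simply bound by the next even moment via Jensen.
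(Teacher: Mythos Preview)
Your proof is correct and follows essentially the same route as the paper's own argument: split the difference into drift and martingale pieces via the integral representation, bound the drift by the polynomial growth of $f$, the boundedness of $\kappa$, and the uniform moment bound of Lemma~\ref{L:Poly-p-Moment}, and bound the stochastic integral by martingale moment (BDG) inequalities together with the boundedness of $\sigma$. The only cosmetic difference is that the paper expands the $p$-th power of the drift integral as a $p$-fold product over $[\pi_\delta(t),t]^p$ rather than using Jensen/H\"older as you do; the resulting estimates coincide.
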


The proofs of these lemmas are provided in Section \ref{S:SS2-poly}. We now prove Theorem \ref{T:LLN-UIT} under Assumptions \ref{A:Poly-Lip} and \ref{A:Poly-growth}.
\begin{proof}[Proof of Theorem \ref{T:LLN-UIT}]
We start by noting that it suffices to prove the statement for even powers $p$. For $p=1$, the required bound follows from H\"{o}lder's inequality. So, let us take $p$ to be an even integer. Using the integral representation for the process $X_t^{\eps,\delta}$ and $x_t$, we get
\begin{equation*}
X_t^{\eps,\delta}-x_t = \int_0^t [{f(X_s^{\eps,\delta})-f(x_s)}] ds +\int_0^t {[\kappa(X_{\pi_\delta(s)}^{\eps,\delta})-\kappa(x_s)]} ds + \eps\int_0^t \sigma(X_s^{\eps,\delta})dW_s.
\end{equation*}
Applying It\^{o}'s formula to the function $\varphi(t,x)=e^{\frac{tp \lambda}{2}}|x|^p,$  we have
\begin{equation*}
\begin{aligned}
e^{\frac{tp \lambda}{2}}|X_t^{\eps,\delta}-x_t|^p
%&= \int_0^t \frac{ p \lambda}{2} e^{\frac{s p \lambda}{2}}(X_s^{\eps,\delta}-x_s)^p \thinspace ds + p \int_0^t e^{\frac{s p \lambda}{2}}(X_s^{\eps,\delta}-x_s)^{p-1}\eps  \sigma(X_s^{\eps,\delta})\thinspace dW_s\\
%& \qquad \qquad + \int_0^t p e^{\frac{s p \lambda}{2}}(X_s^{\eps,\delta}-x_s)^{p-1}\left[{f(X_s^{\eps,\delta})-f(x_s)}+\kappa(X_{\pi_\delta(s)}^{\eps,\delta})-\kappa(x_s)\right] ds \\
%& \qquad \qquad  + \frac{p(p-1)}{2}\int_0^t e^{\frac{s p \lambda}{2}}(X_s^{\eps,\delta}-x_s)^{p-2}\eps^2 \sigma^2(X_s^{\eps,\delta})\thinspace ds\\
&= \int_0^t \frac{ p \lambda}{2} e^{\frac{s p \lambda}{2}}(X_s^{\eps,\delta}-x_s)^p \thinspace ds + p \int_0^t e^{\frac{s p \lambda}{2}}(X_s^{\eps,\delta}-x_s)^{p-1}\eps  \sigma(X_s^{\eps,\delta})\thinspace dW_s\\
& \qquad \qquad \qquad \qquad  + \int_0^t p e^{\frac{s p \lambda}{2}}(X_s^{\eps,\delta}-x_s)^{p-1}\left[{f(X_s^{\eps,\delta})-f(x_s)}\right] ds \\
%& \qquad \qquad \quad  + \int_0^t p e^{\frac{s p \lambda}{2}}(X_s^{\eps,\delta}-x_s)^{p-1}\ind_{(|X_s^{\eps,\delta}|> R, |x_s|> R)}\left[{f(X_s^{\eps,\delta})-f(x_s)}\right] ds \\
%& \qquad \qquad \qquad  + \int_0^t p e^{\frac{s p \lambda}{2}}(X_s^{\eps,\delta}-x_s)^{p-1}\ind_{(|X_s^{\eps,\delta}|> R, |x_s|\le R)}\left[{f(X_s^{\eps,\delta})-f(x_s)}\right] ds \\
%& \qquad \qquad \qquad \qquad  + \int_0^t p e^{\frac{s p \lambda}{2}}(X_s^{\eps,\delta}-x_s)^{p-1}\ind_{(|X_s^{\eps,\delta}|\le R, |x_s|> R)}\left[{f(X_s^{\eps,\delta})-f(x_s)}\right] ds \\
& \qquad \qquad \qquad \qquad \qquad \quad + \int_0^t p e^{\frac{s p \lambda}{2}}(X_s^{\eps,\delta}-x_s)^{p-1}\left[\kappa(X_{\pi_\delta(s)}^{\eps,\delta})-\kappa(x_s)\right] ds \\
& \qquad \qquad \qquad \qquad \qquad \qquad \qquad \quad + \frac{p(p-1)}{2}\int_0^t e^{\frac{s p \lambda}{2}}(X_s^{\eps,\delta}-x_s)^{p-2}\eps^2 \sigma^2(X_s^{\eps,\delta})\thinspace ds.
\end{aligned}
\end{equation*}
Next, using Assumption \ref{A:Poly-Lip} (in particular, Equation \eqref{E:Poly-Diss-Lip}), we obtain
\begin{equation*}
\begin{aligned}
e^{\frac{tp \lambda}{2}}|X_t^{\eps,\delta}-x_t|^p &\le  \int_0^t \frac{ p \lambda}{2} e^{\frac{s p \lambda}{2}}(X_s^{\eps,\delta}-x_s)^p \thinspace ds + p \int_0^t e^{\frac{s p \lambda}{2}}(X_s^{\eps,\delta}-x_s)^{p-1}\eps  \sigma(X_s^{\eps,\delta})\thinspace dW_s\\
& \qquad  -\lambda \int_0^t p e^{\frac{s p \lambda}{2}}(X_s^{\eps,\delta}-x_s)^{p}\thinspace ds +\int_0^t p e^{\frac{s p \lambda}{2}}(X_s^{\eps,\delta}-x_s)^{p-1}\left[\kappa(X_{\pi_\delta(s)}^{\eps,\delta})-\kappa(x_s)\right] ds \\
%& \qquad \qquad + \int_0^t p e^{\frac{s p \lambda}{2}}(X_s^{\eps,\delta}-x_s)^{p-1}\ind_{(|X_s^{\eps,\delta}|> R, |x_s|> R)}\left[{f(X_s^{\eps,\delta})-f(x_s)}\right] ds \\
%& \qquad \qquad \qquad  + \int_0^t p e^{\frac{s p \lambda}{2}}(X_s^{\eps,\delta}-x_s)^{p-1}\ind_{(|X_s^{\eps,\delta}|> R, |x_s|\le R)}\left[{f(X_s^{\eps,\delta})-f(x_s)}\right] ds\\
& \qquad \qquad \qquad \qquad \qquad \qquad \qquad \qquad + \frac{p(p-1)}{2}\int_0^t e^{\frac{s p \lambda}{2}}(X_s^{\eps,\delta}-x_s)^{p-2}\eps^2 \sigma^2(X_s^{\eps,\delta})\thinspace ds.
\end{aligned}
\end{equation*}
%\color{red}
%Shivam: To get a power $\eps^p$ rather $\eps^2$ in the last term: Using Young's inequality for the last term one can get a sufficiently small $\xi>0$ such that
%\begin{equation*}
%(X_s^{\eps,\delta}-x_s)^{p-2}\eps^2 \le \frac{\xi (p-2)}{p}(X_s^{\eps,\delta}-x_s)^p + \frac{p}{2}\frac{1}{\xi^{\frac{p-2}{p}}}\eps^p.
%\end{equation*}
%
%\color{black}
We further take expectation (the stochastic integral is zero due to Lemma \ref{L:Poly-p-Moment}) and use Young's inequality to the term $(X_s^{\eps,\delta}-x_s)^{p-2}\eps^2$ (with the H\"{o}lder conjugates $\frac{p}{p-2},\thinspace \frac{p}{2}$) to get a sufficiently small $\xi>0$ and boundedness of $\sigma$ to obtain
\begin{equation*}
\begin{aligned}
\BE \left[e^{\frac{tp \lambda}{2}}|X_t^{\eps,\delta}-x_t|^p \right] & \le \left(\frac{ p \lambda}{2}-\lambda p+\xi \frac{p(p-1)}{2}\frac{(p-2)}{p}\|\sigma^{2}\|_{\infty} \right) \int_0^t e^{\frac{s p \lambda}{2}}\BE (X_s^{\eps,\delta}-x_s)^p \thinspace ds\\
& + \int_0^t p e^{\frac{s p \lambda}{2}}\BE\left[(X_s^{\eps,\delta}-x_s)^{p-1} [\kappa(X_{\pi_\delta(s)}^{\eps,\delta})-\kappa(x_s)]\right]ds + \eps^p \frac{p(p-1)}{p} \frac{\|\sigma^{2}\|_{\infty}}{\xi^{\frac{p-2}{p}}} \int_0^t e^{\frac{s p \lambda}{2}} ds.
\end{aligned}
\end{equation*}
%Hence,
%\begin{equation*}
%\begin{aligned}
%\BE \left[e^{\frac{tp \lambda}{2}}|X_t^{\eps,\delta}-x_t|^p \right]& \le C\int_0^t  e^{\frac{s p \lambda}{2}}\BE (X_s^{\eps,\delta}-x_s)^p \thinspace ds \\
%& \qquad \qquad + \int_0^t p e^{\frac{s p \lambda}{2}}\BE\left[(X_s^{\eps,\delta}-x_s)^{p-1} [\kappa(X_{\pi_\delta(s)}^{\eps,\delta})-\kappa(x_s)]\right]ds\\
%& \qquad \qquad \qquad  + \frac{p(p-1)}{2}\beta \eps^2  \int_0^t e^{\frac{s p \lambda}{2}}\BE(X_s^{\eps,\delta}-x_s)^{p}\thinspace ds  + \frac{p(p-1)}{2} \beta \eps^2 \int_0^t e^{\frac{s p \lambda}{2}}\thinspace ds.
%\end{aligned}
%\end{equation*}
%\color{red}
%Using the above red lines, the above equation can be modified to
%\begin{equation*}
%\begin{aligned}
%\BE \left[e^{\frac{tp \lambda}{2}}|X_t^{\eps,\delta}-x_t|^p \right] & \le \left(\frac{ p \lambda}{2}-\lambda p+\xi \frac{p(p-1)}{2}\frac{p-2}{p}\|\sigma^{2}\|_{\infty} \right) \int_0^t e^{\frac{s p \lambda}{2}}\BE (X_s^{\eps,\delta}-x_s)^p \ind_{(|X_s^{\eps,\delta}|>  R)} \thinspace ds\\
%& \qquad \qquad + \int_0^t p e^{\frac{s p \lambda}{2}}\BE\left[(X_s^{\eps,\delta}-x_s)^{p-1} [\kappa(X_{\pi_\delta(s)}^{\eps,\delta})-\kappa(x_s)]\right]ds\\
%& \qquad \qquad \qquad \quad  + \eps^p \frac{p(p-1)}{2}\frac{2}{p}  \|\sigma^{2}\|_{\infty}\frac{1}{\xi^{\frac{p-2}{p}}}\int_0^t e^{\frac{s p \lambda}{2}}\thinspace ds.
%\end{aligned}
%\end{equation*}
%\color{black}
Now, in the above expression, first writing $\kappa(X_{\pi_\delta(s)}^{\eps,\delta})-\kappa(x_s)$ as $\kappa(X_{\pi_\delta(s)}^{\eps,\delta})-\kappa(X_s^{\eps,\delta})+\kappa(X_s^{\eps,\delta})-\kappa(x_s)$, then using $(X_s^{\eps,\delta}-x_s)^{p-1} [\kappa(X_{\pi_\delta(s)}^{\eps,\delta})-\kappa(x_s)] \le |X_s^{\eps,\delta}-x_s|^{p-1} |\kappa(X_{\pi_\delta(s)}^{\eps,\delta})-\kappa(x_s)|$ followed by Lipschitz continuity of $\kappa$, we obtain
\begin{equation*}
\begin{aligned}
\BE \left[e^{\frac{tp \lambda}{2}}|X_t^{\eps,\delta}-x_t|^p \right] & \le   p\left(-\left(\frac{  \lambda}{2}-L_{\kappa}\right) + \xi \frac{(p-1)(p-2)}{2p}\|\sigma^{2}\|_{\infty}\right)\int_0^t e^{\frac{s p \lambda}{2}}\BE (X_s^{\eps,\delta}-x_s)^p \thinspace ds\\
&  + L_{\kappa}\int_0^t p e^{\frac{s p \lambda}{2}}\BE\left[|X_s^{\eps,\delta}-x_s|^{p-1} |X_{\pi_\delta(s)}^{\eps,\delta}-X_s^{\eps,\delta}|\right]ds + \frac{\eps^p (p-1)  \|\sigma^{2}\|_{\infty}}{\xi^{\frac{p-2}{p}}} \int_0^t e^{\frac{s p \lambda}{2}}\thinspace ds.
%& \le C\int_0^t  e^{\frac{s p \lambda}{2}}\BE(X_s^{\eps,\delta}-x_s)^p \thinspace ds + C\int_0^t p e^{\frac{s p \lambda}{2}}\BE\left[(X_s^{\eps,\delta}-x_s)^{p-1} [X_{\pi_\delta(s)}^{\eps,\delta}-X_s^{\eps,\delta}]\right]ds\\
%& \qquad\qquad + \frac{p(p-1)}{2} \beta \eps^2 \int_0^t e^{\frac{s p \lambda}{2}}\thinspace ds.
\end{aligned}
\end{equation*}
Using the generalized Young's inequality $ab\leq \frac{\eta}{r_1}a^{r_1}+\frac{1}{\eta^{r_2/r_1}r_2}b^{r_2}$ with the H\"{o}lder conjugates $r_1=\frac{p}{p-1},\thinspace r_2=p$, any $\eta>0$ and Lemma \ref{L:Poly-LLN-Int}, we obtain for $\eta>0$ and $\xi>0$ to be chosen
%there exist two positive constants $\eta_1$ and $\eta_2$ such that
%\begin{equation*}
%(X_s^{\eps,\delta}-x_s)^{p-1} [X_{\pi_\delta(s)}^{\eps,\delta}-X_s^{\eps,\delta}] \le \eta_1(X_s^{\eps,\delta}-x_s)^{p}+  \eta_2[X_{\pi_\delta(s)}^{\eps,\delta}-X_s^{\eps,\delta}]^p.
%\end{equation*}
%Now for the latter term,  Hence
\begin{align*}
\BE \left[e^{\frac{tp \lambda}{2}}|X_t^{\eps,\delta}-x_t|^p \right] & \le   -p\left(\frac{\lambda}{2}-\left(1+\eta\frac{p-1}{p}\right)L_{\kappa} -\xi \frac{(p-1)(p-2)}{2p}\|\sigma^{2}\|_{\infty}\right) \int_0^t e^{\frac{s p \lambda}{2}}\BE [|X_s^{\eps,\delta}-x_s|^p] \thinspace ds\\
& \qquad \qquad \qquad \qquad \qquad \qquad \qquad \qquad \qquad \qquad \qquad + C [\eps^p+\delta^p+ \eps^p\delta^{\frac{p}{2}}] \int_0^t e^{\frac{s p \lambda}{2}}\thinspace ds,
\end{align*}
for some constant $C<\infty$ that depends on $p,\eta,\xi,L_{\kappa},\|\sigma^{2}\|_{\infty}$. Given that we have assumed $\frac{\lambda}{2}-L_{\kappa}>0$, we can choose $\eta,\xi>0$ both sufficiently small such that  one has
\begin{align*}
\frac{  \lambda}{2}-\left(1+\eta\frac{p-1}{p}\right)L_{\kappa} -\xi \frac{(p-1)(p-2)}{2p}\|\sigma^{2}\|_{\infty}&>0.
\end{align*}
%\color{red}
%for some constant $C<\infty$ that depends on $p,\eta,L_{\kappa},\|\sigma^{2}\|_{\infty}$. Given that we have assumed $\frac{\lambda}{2}-L_{\kappa}>0$, we can choose sufficiently small $\eta>0$ and $\xi>0$ such that for all $\epsilon>0$ small enough one has
%\begin{align*}
%\frac{  \lambda}{2}-\left(1+\eta\frac{p-1}{p}\right)L_{\kappa} -\xi \frac{(p-1)(p-2)}{2p}\|\sigma^{2}\|_{\infty}&>0.
%\end{align*}
%\color{black}
This then yields
\begin{align*}
\BE \left[e^{\frac{tp \lambda}{2}}|X_t^{\eps,\delta}-x_t|^p \right] & \le    C [\eps^p+\delta^p+ \eps^p\delta^{\frac{p}{2}}] \int_0^t e^{\frac{s p \lambda}{2}}\thinspace ds,
\end{align*}
yielding the uniform-in-time bound
\begin{align*}
\BE \left[|X_t^{\eps,\delta}-x_t|^p \right] & \le    C [\eps^p+\delta^p+ \eps^p\delta^{\frac{p}{2}}] e^{-\frac{tp \lambda}{2}} \int_0^t e^{\frac{s p \lambda}{2}}\thinspace ds\leq  C [\eps^p+\delta^p+ \eps^p\delta^{\frac{p}{2}}],
\end{align*}
%\color{red}
%\begin{align*}
%\BE \left[|X_t^{\eps,\delta}-x_t|^p \right] & \le    C [\eps^p+\delta^p+ \eps^p\delta^{\frac{p}{2}}] e^{-\frac{tp \lambda}{2}} \int_0^t e^{\frac{s p \lambda}{2}}\thinspace ds\nonumber\\
%&\leq  C [\eps^p+\delta^p+ \eps^p\delta^{\frac{p}{2}}],
%\end{align*}
%\color{black}
with a potentially different constant $C<\infty$ that is independent of time $t\in\BR_{+}$. This completes the proof of the theorem.
\end{proof}

\subsection{Proof of Lemmas \ref{L:Poly-p-Moment} and \ref{L:Poly-LLN-Int}}\label{S:SS2-poly}

\begin{proof}[Proof of Lemma \ref{L:Poly-p-Moment}]
We will only prove the bound for $\sup_{t \ge 0}\BE[|X_t^{\eps,\delta}|^p]$ as the bound for $\sup_{t\ge 0}|x_{t}|$ (and consequently for $\sup_{t\ge 0}|x_{t}|^{p}$) follows by the exact same argument and is simpler.

We note that it is just sufficient to  prove the statement for even powers $p$.  We start by using It\^{o}'s formula to the function $\varphi(t,x)=e^{\frac{tp \alpha}{2}}|x|^p,$ and noting $e^{\frac{tp \alpha}{2}}|x|^p=e^{\frac{tp \alpha}{2}}x^p$ to yield
\begin{equation*}
\begin{aligned}
e^{\frac{tp \alpha}{2}}|X_t^{\eps,\delta}|^p &= |X_0^{\eps,\delta}|^p+ \int_0^t \frac{ p \alpha}{2} e^{\frac{s p \alpha}{2}}(X_s^{\eps,\delta})^p \thinspace ds + \int_0^t p e^{\frac{s p \alpha}{2}}(X_s^{\eps,\delta})^{p-1}\left[f(X_s^{\eps,\delta})+\kappa(X_{\pi_\delta(s)}^{\eps,\delta})\right] ds \\
& \qquad \qquad \quad  +\frac{p(p-1)}{2}\int_0^t e^{\frac{s p \alpha}{2}}(X_s^{\eps,\delta})^{p-2}\eps^2 \sigma^2(X_s^{\eps,\delta})\thinspace ds + p \int_0^t e^{\frac{s p \alpha}{2}}(X_s^{\eps,\delta})^{p-1}\eps  \sigma(X_s^{\eps,\delta})\thinspace dW_s.
\end{aligned}
\end{equation*}
Therefore,
\begin{equation*}
\begin{aligned}
e^{\frac{tp \alpha}{2}}|X_t^{\eps,\delta}|^p & \le  |X_0^{\eps,\delta}|^p+\int_0^t pe^{\frac{s p \alpha}{2}} \left[\frac{\alpha}{2} |X_s^{\eps,\delta}|^2 + X_s^{\eps,\delta} \cdot f(X_s^{\eps,\delta}) \right] |X_s^{\eps,\delta}|^{p-2} ds \\
& \qquad \qquad \qquad + \int_0^t p e^{\frac{s p \alpha}{2}}(X_s^{\eps,\delta})^{p-1}|\kappa(X_{\pi_\delta(s)}^{\eps,\delta}|\thinspace ds
+  \frac{p(p-1)}{2}\int_0^t e^{\frac{s p \alpha}{2}}|X_s^{\eps,\delta}|^{p-2}\eps^2 |\sigma(X_s^{\eps,\delta})|^2\thinspace ds \\
& \qquad \qquad \qquad \qquad \qquad \qquad \qquad \qquad \qquad \qquad \qquad \quad + p \int_0^t e^{\frac{s p \alpha}{2}}(X_s^{\eps,\delta})^{p-1}\eps  \sigma(X_s^{\eps,\delta})\thinspace dW_s.
\end{aligned}
\end{equation*}
Next, using (\ref{E:Poly-diss-f}) and inequality $(X_t^{\eps,\delta})^{p-1}\le |X_t^{\eps,\delta}|^{p-1} $ followed by taking expectation, we get for some time-independent constant $C<\infty$
\begin{multline*}
e^{\frac{tp \alpha}{2}} \BE [|X_t^{\eps,\delta}|^p] \le \BE |X_0^{\eps,\delta}|^p +\int_0^t -\frac{p \alpha}{2}  \BE \left[e^{\frac{s p \alpha}{2}}|X_s^{\eps,\delta}|^p \right] ds + \int_0^t p \|\kappa\|_{\infty} \BE \left[e^{\frac{s p \alpha}{2}}|X_s^{\eps,\delta}|^{p-1} \right] ds \\
 + \left(p\beta+\eps^{2}\frac{p(p-1)\|\sigma^{2}\|_{\infty}}{2}\right)\int_0^t \BE \left[e^{\frac{s p \alpha}{2}}|X_s^{\eps,\delta}|^{p-2}\right] ds.
\end{multline*}
We further employ the generalized Young's inequality to the terms $(p\|\kappa\|_{\infty})|X_s^{\eps,\delta}|^{p-1}$ (with the H\"{o}lder conjugates $\frac{p}{p-1},\thinspace p$) and $\left(p\beta+\eps^{2}\frac{p(p-1)\|\sigma^{2}\|_{\infty}}{2}\right)|X_s^{\eps,\delta}|^{p-2}$ (with the H\"{o}lder conjugates $\frac{p}{p-2},\thinspace \frac{p}{2}$) obtaining for some positive constants $\eta_1, \thinspace \eta_2$ to be chosen that
\begin{align*}
e^{\frac{tp \alpha}{2}} \BE [|X_t^{\eps,\delta}|^p]& \le  -\left(\frac{p\alpha}{2}-\eta_1 (p-1)\|\kappa\|_{\infty}-\eta_2 \frac{p-2}{p}\left(p\beta+\eps^{2}\frac{p(p-1)\|\sigma^{2}\|_{\infty}}{2}\right) \right) \int_0^t   \BE \left[e^{\frac{s p \alpha}{2}}|X_s^{\eps,\delta}|^{p} \right] ds  \nonumber\\
&\quad \qquad \qquad \qquad \qquad \qquad \qquad \qquad \qquad \qquad \qquad \qquad \qquad \quad   +\BE |X_0^{\eps,\delta}|^p  + C \int_0^t e^{\frac{s p \alpha}{2}}\thinspace ds .
\end{align*}
Choosing now $\eta_1,\eta_2>0$ sufficiently small so that
\begin{align*}
\left(\frac{p\alpha}{2}-\eta_1 (p-1)\|\kappa\|_{\infty}-\eta_2 \frac{p-2}{p}\left(p\beta+\eps^{2}\frac{p(p-1)\|\sigma^{2}\|_{\infty}}{2}\right) \right)&>0
\end{align*}
yields the bound
%\begin{align*}
 $\BE [|X_t^{\eps,\delta}|^p] \le e^{-\frac{tp \alpha}{2}}|x_{0}|^p + C e^{-\frac{tp \alpha}{2}} \int_0^t e^{\frac{s p \alpha}{2}}\thinspace ds <C$ 
%\end{align*}
for some time-independent constant $C<\infty$ and for all $\eps,\delta>0$. This completes the proof of lemma.
\end{proof}

\begin{proof}[Proof of Lemma \ref{L:Poly-LLN-Int}]
From the integral representation of $X_t^{\eps,\delta},$ we get
$$\left|{X_{\pi_\delta(s)}^{\varepsilon, \delta}-X_s^{\varepsilon, \delta}}\right|^p \le C \left|\int_{\pi_\delta(s)}^s\left\{f(X_u^{\eps,\delta})+\kappa(X_{\pi_\delta(u)}^{\eps,\delta})\right\}du \right|^p +  C \left|\int_{\pi_\delta(s)}^s \eps \sigma(X_u^{\eps,\delta})dW_u \right|^p.$$
For some $q \in \BN$, the boundedness of $\kappa$, Assumption \ref{A:Poly-Lip} and the martingale moment inequalities \cite[Proposition 3.26]{KS91} yield
\begin{equation*}
\begin{aligned}
\BE\left[|{X_{\pi_\delta(s)}^{\varepsilon, \delta}-X_s^{\varepsilon, \delta}}|^p \right] & \le C \int_{[{\pi_\delta(s)},s]^p}\sum_{i=1}^p\left\{1+\BE |X_{r_i}^{\eps,\delta}|^{q}\right\} dr_1\cdots dr_p  + C \eps^p \BE \left\{\int_{[{\pi_\delta(s)},s]} \sigma^2(X_{u}^{\eps,\delta})\thinspace d{u}\right\}^{\frac{p}{2}}ds.
%&\le C[\delta^p+ \eps^p\delta^{\frac{p}{2}}],
\end{aligned}
\end{equation*}
Finally, using boundedness of $\sigma$ from Assumption \ref{A:Poly-growth}, Lemma \ref{L:Poly-p-Moment} and the fact $s-\pi_\delta(s)< \delta$, we obtain the required bound.
\end{proof}

\section{{\sc clt} Type Result: Proof of Theorem \ref{T:fluctuations-R-1-2}}\label{S:CLT-Proof}

We now prove Theorem \ref{T:fluctuations-R-1-2} %with Assumptions \ref{A:Lipschitz-conti}, \ref{A:Boundedness}, \ref{A:Integral-Suff-Cond} in Section \ref{S:CLT-S-1} below.
 with Assumptions \ref{A:Poly-Lip}, \ref{A:Poly-growth}, \ref{A:Poly-Int}.
 %, we add a few comments in Section \ref{S:CLT-S-2}.
%\subsection{Proof of Theorem \ref{T:fluctuations-R-1-2} with Assumptions \ref{A:Lipschitz-conti}, \ref{A:Boundedness} and \ref{A:Integral-Suff-Cond}}\label{S:CLT-S-1}
%In this section, we prove our second main result, i.e, Theorem \ref{T:fluctuations-R-1-2} with Assumptions \ref{A:Lipschitz-conti}, \ref{A:Boundedness} and \ref{A:Integral-Suff-Cond}.
 An organization of this section is as follows. In Section \ref{S:CLT-Prop}, we prove Theorem \ref{T:fluctuations-R-1-2} through its main building blocks Propositions \ref{P:Central-Prop}, \ref{P:I2-Prop} and  \ref{P:I3-Prop-Rem}. The proofs of Propositions \ref{P:Central-Prop} through
\ref{P:I3-Prop-Rem} are provided in Section \ref{S:CLT-Proof-Prop}. Proposition \ref{P:Central-Prop} plays a key role in the proof of Theorem
\ref{T:fluctuations-R-1-2} and is proved through a series of helpful lemmas. The proofs of these lemmas are deferred to Section \ref{S:CLT-Lemmas-Proof}.

\subsection{Proof of Theorem \ref{T:fluctuations-R-1-2}}\label{S:CLT-Prop}

% Here, we will be looking to get a time-independent estimate for the quantity
%$$\sup_{t\ge 0}\BE\left|Z_t^{\eps,\delta}-Z_t\right|.$$
%In this section, we prove our first main result, that is, Theorem \ref{T:LLN-UIT}. This theorem is proved through a series of auxiliary Lemmas \ref{L:LLN-Second-moment-bound-component-1}, \ref{L:LLN-Second-moment-bound-component-2}, \ref{L:LLN-Second-moment-bound-component-3}, \ref{L:LLN-Second-moment-bound-component-4} stated below. In these lemmas, we show that the terms of our interest (i.e., $J_1^{\eps,\delta}(t),J_2^{\eps,\delta}(t),J_3^{\eps,\delta}(t)$ defined below) are small in an appropriate (path-wise) sense; however, the bound for $J_4^{\eps,\delta}(t)$ has an additional term. The proofs of these results are provided in Section \ref{S:SS2}.

Before proving Theorem \ref{T:fluctuations-R-1-2}, let us get more insights into the fluctuation process $Z^{\eps,\delta}_t $ and its limiting process $Z_t$ solving \eqref{E:Limiting-SDE-Fluctuations}. Recalling the integral representation for the process $X_t^{\eps,\delta}$ and $x_t$ from equations \eqref{E:W-S-SDE-With-pert} and \eqref{E:det-sys}, respectively, followed by Taylor's theorem, we have
%\triangleq \frac{X^{\eps,\delta}_t - x_t}{\eps}

\begin{multline}\label{E:Fluctuation-R-1-2}
\begin{aligned}
Z_t^{\eps,\delta} &= \int_0^t \frac{f(X_s^{\eps,\delta})-f(x_s)}{\eps} ds +\int_0^t \frac{[\kappa(X_{\pi_\delta(s)}^{\eps,\delta})-\kappa(X_s^{\eps,\delta})]}{\eps} ds
 + \int_0^t \frac{[\kappa(X_s^{\eps,\delta})-\kappa(x_s)]}{\eps}ds \\
 & \qquad \qquad \qquad \qquad \qquad \qquad \qquad \qquad \qquad \quad \quad \qquad \qquad \qquad \qquad \qquad \qquad  + \int_0^t \sigma(X_s^{\eps,\delta})dW_s\\
 & = \int_0^t [Df(x_s)+D\kappa (x_s)]Z_s^{\eps,\delta}\thinspace ds - \int_0^t D\kappa (x_s)\frac{X_s^{\varepsilon, \delta}-X_{\pi_\delta(s)}^{\varepsilon, \delta}}{\eps}\thinspace ds
 + \int_0^t \sigma(X_s^{\eps,\delta})dW_s+ \int_0^t {\sf R}_s^{\eps,\delta}\thinspace ds,
 \end{aligned}
\end{multline}
where,
\begin{equation}\label{E:Remainder-terms}
\begin{aligned}
{\sf R}_s^{\eps,\delta} \triangleq \sum_{i=1}^3{\sf R}_i^{\eps,\delta}(s) &\triangleq \left[\frac{f(X_s^{\eps,\delta})-f(x_s)}{\eps}- Df(x_s)Z_s^{\eps,\delta} \right]
+\left[\frac{\kappa(X_s^{\eps,\delta})-\kappa(x_s)}{\eps}-D\kappa(x_s)Z_s^{\eps,\delta}\right]\\
& \qquad \qquad \qquad \qquad \qquad \quad+ \left[\frac{\kappa(X_{\pi_\delta(s)}^{\eps,\delta})-\kappa(X_s^{\eps,\delta})}{\eps}-D\kappa(x_s)\frac{X_{\pi_\delta(s)}^{\varepsilon, \delta}-X_s^{\varepsilon, \delta}}{\eps} \right].
\end{aligned}
\end{equation}
%+{\sf R}_2^{\eps,\delta}(s)+{\sf R}_3^{\eps,\delta}(s)
One can now identify the limiting {\sc sde} (that is, equation \eqref{E:Limiting-SDE-Fluctuations}) for the process $Z_t$ from equation \eqref{E:Fluctuation-R-1-2} as follows. This identification is essentially obtained by replacing $Z_t^{\eps,\delta}$ by $Z_t$, $\int_0^t D\kappa (x_s)\frac{X_s^{\varepsilon, \delta}-X_{\pi_\delta(s)}^{\varepsilon, \delta}}{\eps}\thinspace ds$ by $\frac{\cc}{2} \int_0^t D \kappa(x_s)[f(x_s)+\kappa(x_s)] \thinspace ds$, $X_t^{\eps,\delta}$ by $x_t$ in the stochastic integral $\int_0^t \sigma(X_s^{\eps,\delta})dW_s$, and finally showing the remainder term $\int_0^t {\sf R}_s^{\eps,\delta}\thinspace ds$ is small. These approximations are made precise in Propositions \ref{P:Central-Prop}, \ref{P:I2-Prop} and \ref{P:I3-Prop-Rem} stated below.

%From equation \eqref{E:Fluctuation-R-1-2}, we now  start to identify the {\sde}(see \eqref{E:Limiting-SDE-Fluctuations}) satisfied by the limiting process $Z_t$.

In Regimes $1$ and $2$, we see that rescaled process $Z_t^{\eps,\delta}\triangleq \frac{X_t^{\eps,\delta}-x_t}{\eps}$ and the effective process $Z_t$ solve equations \eqref{E:Fluctuation-R-1-2} and \eqref{E:Limiting-SDE-Fluctuations}, respectively. To show the convergence of $Z_t^{\eps,\delta}$ to $Z_t$, as $\eps,\delta \searrow 0,$ one first need to demonstrate the following convergence in an appropriate sense.
\begin{multline}\label{E:ell}
\lim_{\substack{\eps,\delta \searrow 0\\ \delta/\eps \to \cc}} \int_0^t e^{\int_s^t[Df(x_u)+D\kappa (x_u)]du}D\kappa(x_s)\frac{X_s^{\varepsilon, \delta}-X_{\pi_\delta(s)}^{\varepsilon, \delta}}{\eps}\thinspace ds \\ =\frac{\cc}{2} \int_0^t e^{\int_s^t[Df(x_u)+D\kappa (x_u)]du}D\kappa(x_s)[f(x_s)+\kappa(x_s)] \thinspace ds.
\end{multline}
In the fluctuation study, equation \eqref{E:ell} enables us to identify the extra drift term capturing both the fast sampling and small noise effects. In Proposition \ref{P:Central-Prop} below, we show the convergence of  $\int_0^t e^{\int_s^t[Df(x_u)+D\kappa (x_u)]du}D\kappa(x_s)\frac{X_s^{\varepsilon, \delta}-X_{\pi_\delta(s)}^{\varepsilon, \delta}}{\eps}\thinspace ds$ to the term $\frac{\cc}{2} \int_0^t e^{\int_s^t[Df(x_u)+D\kappa (x_u)]du}D\kappa(x_s)[f(x_s)+\kappa(x_s)] \thinspace ds$, as $\eps,\delta \searrow 0.$ In Propositions \ref{P:Central-Prop} through \ref{P:I3-Prop-Rem} below, the constant $C$ is always time-independent.

\begin{proposition}\label{P:Central-Prop}
Let $X_{t}^{\eps,\delta}$ be the solution of {\sc sde} \eqref{E:W-S-SDE-With-pert} and
$$I_1^{\eps,\delta}(t)\triangleq \left|\int_0^t e^{\int_s^t[Df(x_u)+D\kappa (x_u)]du}\left[\frac{\cc}{2}  D \kappa(x_s)[f(x_s)+\kappa(x_s)]- D\kappa (x_s)\frac{X_s^{\varepsilon, \delta}-X_{\pi_\delta(s)}^{\varepsilon, \delta}}{\eps}\right]ds\right|^p,$$
where, $p \ge 1$ is an integer. Then, for all sufficiently small $\eps,\delta>0,$ there exists a positive constant $C_{\ref{P:Central-Prop}}$ such that
$$\sup_{ t \ge 0}\BE \left[I_1^{\eps,\delta}(t)\right] \le C_{\ref{P:Central-Prop}}\left[\frac{\delta^{2p}}{\eps^p}+\left|\frac{\delta}{\eps}-\cc \right|^p+\left(\frac{\delta^{2p}+\delta^p}{\eps^p}\right)(\delta^p+\eps^p+{\delta}^{\frac{p}{2}}\eps^p)+\delta^{p/2} \right].$$
\end{proposition}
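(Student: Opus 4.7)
The plan is to use the SDE to express
\begin{equation*}
\frac{X_s^{\eps,\delta}-X_{\pi_\delta(s)}^{\eps,\delta}}{\eps}
=\frac{1}{\eps}\int_{\pi_\delta(s)}^{s}\!\bigl[f(X_u^{\eps,\delta})+\kappa(X_{\pi_\delta(u)}^{\eps,\delta})\bigr]\,du
+\int_{\pi_\delta(s)}^{s}\!\sigma(X_u^{\eps,\delta})\,dW_u,
\end{equation*}
and then compare each piece to its natural limiting object. Writing $\Psi(s,t)\triangleq e^{\int_s^{t}[Df(x_u)+D\kappa(x_u)]\,du}D\kappa(x_s)$, which is controlled in an integrable way uniformly in $t$ by Assumption~\ref{A:Poly-Int}, the integrand of $I_{1}^{\eps,\delta}(t)$ decomposes into a ``drift-sampling'' part, a ``drift-swap'' part (swapping $X^{\eps,\delta}$ for $x$) and a ``noise'' part. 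Raising to the $p$-th power, taking expectation and using the triangle inequality reduces the proposition to bounding each of these three pieces uniformly in $t$.

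The drift-sampling contribution is
\begin{equation*}
\int_0^{t}\Psi(s,t)\bigl[f(x_s)+\kappa(x_s)\bigr]\left[\frac{\cc}{2}-\frac{s-\pi_\delta(s)}{\eps}\right]ds,
\end{equation*}
and this is where the main work lies. I split the bracket as $\bigl(\frac{\cc}{2}-\frac{\delta}{2\eps}\bigr)+\frac{1}{\eps}\bigl(\frac{\delta}{2}-(s-\pi_\delta(s))\bigr)$. The first summand factors out and, together with the uniform bound on $\int_0^{t}|\Psi(s,t)|\,ds$, yields the $\bigl|\frac{\delta}{\eps}-\cc\bigr|^{p}$ term. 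The second summand involves the sawtooth $\phi_\delta(s)\triangleq\frac{\delta}{2}-(s-\pi_\delta(s))$, which is $\delta$-periodic with mean zero on each sampling interval. Integrating by parts with antiderivative $\Phi_\delta(s)=\int_0^{s}\phi_\delta(u)\,du$, which is uniformly bounded by $C\delta^{2}$ (and vanishes at the grid points), transfers a derivative onto $\Psi(s,t)[f(x_s)+\kappa(x_s)]$; bounding the boundary and remaining integral by Assumption~\ref{A:Poly-Int} and the boundedness of the second derivatives of $f,\kappa$ supplied by Assumption~\ref{A:Poly-Int} delivers the $\delta^{2p}/\eps^{p}$ term. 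This integration-by-parts trick is the key idea and I expect it to be the main technical obstacle, precisely because the naive pointwise bound $|\phi_\delta|\le\delta/2$ would only give $\delta^{p}/\eps^{p}$ and destroy the final estimate.

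The drift-swap contribution, in which I replace $f(X_u^{\eps,\delta})$, $\kappa(X_{\pi_\delta(u)}^{\eps,\delta})$ by $f(x_s)$, $\kappa(x_s)$ inside $\frac{1}{\eps}\int_{\pi_\delta(s)}^{s}\,du$, produces errors of the form $\frac{1}{\eps}\int_{\pi_\delta(s)}^{s}[\cdot]\,du$ controlled by the Lipschitz constants of $f,\kappa$ (on the relevant compact set, via Assumption~\ref{A:Poly-Lip} and the uniform moment bounds of Lemma~\ref{L:Poly-p-Moment}) times $|X_u^{\eps,\delta}-x_u|$ or $|X_{\pi_\delta(u)}^{\eps,\delta}-x_u|$. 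The outer integration against $\Psi(s,t)$ then yields a prefactor of order $\delta/\eps$, so $p$-th moments and Theorem~\ref{T:LLN-UIT} together with Lemma~\ref{L:Poly-LLN-Int} produce the $\bigl(\frac{\delta^{2p}+\delta^{p}}{\eps^{p}}\bigr)(\delta^{p}+\eps^{p}+\delta^{p/2}\eps^{p})$ contribution (the $\delta^{2p}/\eps^{p}$ factor arising from the $\pi_\delta$-sampling inside $\kappa$, which forces a two-step comparison through $X_s^{\eps,\delta}$).

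Finally the noise contribution $\int_0^{t}\Psi(s,t)\int_{\pi_\delta(s)}^{s}\sigma(X_u^{\eps,\delta})\,dW_u\,ds$ is the easiest: the simplest route is to bring the absolute value inside, use Jensen/H\"older and the Burkholder--Davis--Gundy inequality to get $\BE|\int_{\pi_\delta(s)}^{s}\sigma(X_u^{\eps,\delta})\,dW_u|^{p}\le C\delta^{p/2}$ from boundedness of $\sigma$, and then integrate $|\Psi(s,t)|$ using Assumption~\ref{A:Poly-Int}; this gives the $\delta^{p/2}$ term. (A sharper Fubini-based estimate is possible but unnecessary.) Combining these four bounds produces the announced estimate, which is uniform in $t\in[0,\infty)$ because every time integral is controlled through Assumption~\ref{A:Poly-Int}.
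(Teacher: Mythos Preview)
Your proposal is correct and follows essentially the same strategy as the paper, with a somewhat different organization. The paper decomposes $(1/\eps)D\kappa(x_s)[X_s^{\eps,\delta}-X_{\pi_\delta(s)}^{\eps,\delta}]$ into four pieces $\mathsf{M}_1,\ldots,\mathsf{M}_4$ (Lemma~\ref{L:M-terms}), freezing the drift at $x_{\pi_\delta(s)}$ rather than at $x_s$, and then treats each via Lemmas~\ref{L:M1-estimate}--\ref{L:M4-estimate}; your three-part split (drift-sampling, drift-swap, noise) is a regrouping of the same ingredients. The sawtooth argument is identical in spirit: the paper (Lemma~\ref{L:M-1-S-1}) first reduces to piecewise-constant coefficients $D\kappa(x_{i\delta})f(x_{i\delta})$, integrates by parts on each $[i\delta,(i+1)\delta]$, and then compares the resulting Riemann sum with the integral (Lemmas~\ref{L:M-1-R-term}, \ref{L:R-1-Rem}, \ref{L:R-2-3-Rem}), whereas your single global integration by parts against the antiderivative $\Phi_\delta$ of the mean-zero sawtooth achieves the same $\delta^2$ gain in one stroke and is somewhat more direct. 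Your drift-swap and noise pieces correspond exactly to the paper's $\mathsf{M}_2,\mathsf{M}_3$ and $\mathsf{M}_4$ estimates, handled there by Theorem~\ref{T:LLN-UIT}, Lemma~\ref{L:Sampling-Difference}, and the martingale moment inequalities respectively.
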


In Propositions \ref{P:I2-Prop} and \ref{P:I3-Prop-Rem} stated below, we show that the terms $I_2^{\eps,\delta}(t)$ and $I_3^{\eps,\delta}(t)$ are small in an appropriate sense as $\eps,\delta$ vanish.
\begin{proposition}\label{P:I2-Prop}
Let $X_{t}^{\eps,\delta}$ be the solution of {\sc sde} \eqref{E:W-S-SDE-With-pert} and
$$I_2^{\eps,\delta}(t) \triangleq \left|\int_0^t e^{\int_s^t[Df(x_u)+D\kappa (x_u)]du}\left[\sigma(X_s^{\eps,\delta})-\sigma(x_s)\right]dW_s\right|^p,$$
where, $p \ge 1$ is an integer. Then, for all sufficiently small $\eps,\delta>0,$ there exists a positive constant $C_{\ref{P:I2-Prop}}$ such that
 $$\sup_{ t \ge 0}\BE \left[I_2^{\eps,\delta}(t)\right] \le C_{\ref{P:I2-Prop}} \left(\delta^{2p}+\eps^{2p}+{\delta}^p\eps^{2p} \right)^{\frac{1}{2}}.$$
\end{proposition}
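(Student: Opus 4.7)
The plan is to combine the Burkholder--Davis--Gundy (BDG) inequality with the Lipschitz continuity of $\sigma$, the uniform-in-time bound from Assumption \ref{A:Poly-Int} with $m=2$, and the uniform-in-time Law of Large Numbers already established in Theorem \ref{T:LLN-UIT}. The key observation that makes everything uniform in $t$ is that Assumption \ref{A:Poly-Int} with $m=2$ supplies, for the weight $w(s,t)\triangleq e^{2\int_s^t [Df(x_u)+D\kappa(x_u)]du}$, a time-independent bound $W(t)\triangleq \int_0^t w(s,t)\,ds\le C$.

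First, since $e^{\int_s^t[\cdots]du}=e^{\int_0^t[\cdots]du}\cdot e^{-\int_0^s[\cdots]du}$, I would pull the deterministic $t$-dependent factor outside the stochastic integral so that the remaining integrand is $\filt_s$-adapted. Applying BDG (the same martingale moment inequality used elsewhere in the paper, \cite[Proposition 3.26]{KS91}) and then returning the exponential inside yields
$$\BE\!\left[I_2^{\eps,\delta}(t)\right] \le C_p\,\BE\!\left[\left(\int_0^t w(s,t)\,\bigl|\sigma(X_s^{\eps,\delta})-\sigma(x_s)\bigr|^2 ds\right)^{p/2}\right].$$
Using the Lipschitz bound $|\sigma(X_s^{\eps,\delta})-\sigma(x_s)|\le L_\sigma|X_s^{\eps,\delta}-x_s|$ from Assumption \ref{A:Poly-Lip} reduces the task to bounding
$$\BE\!\left[\left(\int_0^t w(s,t)\,|X_s^{\eps,\delta}-x_s|^2\,ds\right)^{p/2}\right].$$

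Next, I would split on the value of $p$. For $p\ge 2$, I would regard $w(s,t)\,ds/W(t)$ as a probability measure on $[0,t]$ and apply Jensen's inequality to the convex map $x\mapsto x^{p/2}$, giving
$$\left(\int_0^t w(s,t)\,|X_s^{\eps,\delta}-x_s|^2\,ds\right)^{p/2} \le W(t)^{p/2-1}\int_0^t w(s,t)\,|X_s^{\eps,\delta}-x_s|^p\,ds.$$
Taking expectation, invoking Fubini together with Theorem \ref{T:LLN-UIT}, and using $\sup_{t\ge 0}W(t)\le C$, the right-hand side is bounded by $C(\delta^p+\eps^p+\delta^{p/2}\eps^p)$ uniformly in $t$. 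For $p=1$, the same conclusion follows from Cauchy--Schwarz applied to the $p=2$ estimate. Since $\delta^p+\eps^p+\delta^{p/2}\eps^p$ and $(\delta^{2p}+\eps^{2p}+\delta^p\eps^{2p})^{1/2}$ are comparable up to a multiplicative constant (by elementary $\sqrt{a+b+c}\asymp \sqrt a+\sqrt b+\sqrt c$), the stated bound follows.

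There is no serious conceptual obstacle here: the only mildly subtle point is recognizing that Assumption \ref{A:Poly-Int} with $m=2$ provides precisely the time-uniform integrability of the exponential weight required to apply Jensen without picking up any growth in $t$, so that the uniform-in-time {\sc lln} bound of Theorem \ref{T:LLN-UIT} transfers directly to $I_2^{\eps,\delta}(t)$.
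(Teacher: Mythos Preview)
Your proof is correct and uses the same ingredients (BDG, Lipschitz continuity of $\sigma$, Assumption~\ref{A:Poly-Int} with $m=2$, and Theorem~\ref{T:LLN-UIT}), but the order in which you apply them differs from the paper. After BDG, the paper applies Jensen for the \emph{concave} map $y\mapsto\sqrt{y}$ to pass from $\BE[(\cdot)^{p/2}]$ to $[\BE(\cdot)^{p}]^{1/2}$, then expands the $p$-th power as a multi-integral over $[0,t]^p$, uses the product-to-sum inequality $\prod_i a_i\le C\sum_i a_i^p$, and finally invokes Theorem~\ref{T:LLN-UIT} with exponent $2p$; this yields the stated bound directly and works uniformly for all integers $p\ge 1$. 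You instead normalize $w(s,t)\,ds$ to a probability measure and apply Jensen for the \emph{convex} map $x\mapsto x^{p/2}$ to linearize before taking expectation, then use Theorem~\ref{T:LLN-UIT} with exponent $p$. Your route avoids the multi-integral expansion and is arguably cleaner, at the cost of needing a separate treatment of $p=1$ (where $x^{1/2}$ is concave), which you handle correctly via Cauchy--Schwarz. Both arguments produce the same bound up to the harmless equivalence $(\delta^{2p}+\eps^{2p}+\delta^p\eps^{2p})^{1/2}\asymp \delta^p+\eps^p+\delta^{p/2}\eps^p$.
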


\begin{proposition}\label{P:I3-Prop-Rem}
Let $X_{t}^{\eps,\delta}$ be the solution of {\sc sde} \eqref{E:W-S-SDE-With-pert}, ${\sf R}_{t}^{\eps,\delta}$ be defined as in equation \eqref{E:Remainder-terms}, and
\begin{equation}\label{E:I3}
I_3^{\eps,\delta}(t)\triangleq \left|\int_0^t e^{\int_s^t[Df(x_u)+D\kappa (x_u)]du} {\sf R}_s^{\eps,\delta}\thinspace ds\right|^p,
\end{equation}
where, $p \ge 1$ is an integer. Then, for all sufficiently small $\eps,\delta>0,$ there exists a positive constant $C_{\ref{P:I3-Prop-Rem}}$ such that
 $$\sup_{ t \ge 0}\BE \left[I_3^{\eps,\delta}(t)\right] \le \frac{C_{\ref{P:I3-Prop-Rem}}}{\eps^p}(\eps^{2p}+\delta^{2p}+\eps^{2p} \delta^p).$$
\end{proposition}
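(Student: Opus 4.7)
The plan is to decompose $\mathsf{R}_s^{\eps,\delta} = \mathsf{R}_1^{\eps,\delta}(s) + \mathsf{R}_2^{\eps,\delta}(s) + \mathsf{R}_3^{\eps,\delta}(s)$ exactly as in \eqref{E:Remainder-terms}, establish the uniform-in-time pointwise bound
$\sup_{s\ge 0} \BE\bigl[|\mathsf{R}_s^{\eps,\delta}|^p\bigr] \le \frac{C}{\eps^p}(\eps^{2p}+\delta^{2p}+\eps^{2p}\delta^p)$,
and then push this bound through the integration against the kernel $e^{\int_s^t[Df(x_u)+D\kappa(x_u)]du}$ via Assumption \ref{A:Poly-Int}. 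Concretely, I would start from H\"older's inequality in the form $\bigl(\int_0^t a(s)b(s)\,ds\bigr)^p \le \bigl(\int_0^t a(s)\,ds\bigr)^{p-1}\int_0^t a(s)b(s)^p\,ds$ with $a(s)=e^{\int_s^t[Df(x_u)+D\kappa(x_u)]du}$ and $b(s)=|\mathsf{R}_s^{\eps,\delta}|$; taking expectations and invoking Assumption \ref{A:Poly-Int} once on the $(p-1)$-factor and a second time on the remaining integral reduces matters to the pointwise bound above.

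For $\mathsf{R}_1^{\eps,\delta}(s)$ I would apply second-order Taylor's theorem to $f$, obtaining $\mathsf{R}_1^{\eps,\delta}(s) = \tfrac{1}{2\eps}D^2 f(\xi_s^{\eps,\delta})(X_s^{\eps,\delta}-x_s)^2$ for some $\xi_s^{\eps,\delta}$ between $x_s$ and $X_s^{\eps,\delta}$. Since $D^2 f$ grows polynomially (Assumption \ref{A:Poly-Int}), H\"older's inequality separates $D^2 f(\xi)$ from $(X_s^{\eps,\delta}-x_s)^2$: the polynomial moments of $D^2 f(\xi_s^{\eps,\delta})$ are controlled uniformly in $s$ by the moment bound $\sup_{t\ge 0}\BE|X_t^{\eps,\delta}|^q \le C$ of Lemma \ref{L:Poly-p-Moment} together with $\sup_{t\ge 0}|x_t|\le C$, while Theorem \ref{T:LLN-UIT} applied at the exponent $2p$ yields $\BE|X_s^{\eps,\delta}-x_s|^{2p} \le C(\delta^{2p}+\eps^{2p}+\delta^p\eps^{2p})$. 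This gives exactly $\BE|\mathsf{R}_1^{\eps,\delta}(s)|^p \le \frac{C}{\eps^p}(\delta^{2p}+\eps^{2p}+\delta^p\eps^{2p})$. The same argument for $\mathsf{R}_2^{\eps,\delta}(s)$ is easier because $D^2\kappa$ is uniformly bounded.

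For the more delicate term $\mathsf{R}_3^{\eps,\delta}(s)$ I would Taylor-expand $\kappa(X_{\pi_\delta(s)}^{\eps,\delta})$ around $X_s^{\eps,\delta}$, write
\begin{equation*}
\mathsf{R}_3^{\eps,\delta}(s) = \tfrac{1}{\eps}\bigl[D\kappa(X_s^{\eps,\delta}) - D\kappa(x_s)\bigr]\bigl(X_{\pi_\delta(s)}^{\eps,\delta}-X_s^{\eps,\delta}\bigr) + \tfrac{1}{2\eps}D^2\kappa(\eta_s^{\eps,\delta})\bigl(X_{\pi_\delta(s)}^{\eps,\delta}-X_s^{\eps,\delta}\bigr)^2,
\end{equation*}
and use the Lipschitz continuity of $D\kappa$ (a consequence of $D^2\kappa$ being bounded) on the first piece and the boundedness of $D^2\kappa$ on the second. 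Cauchy--Schwarz combined with Theorem \ref{T:LLN-UIT} (controlling $\BE|X_s^{\eps,\delta}-x_s|^{2p}$) and Lemma \ref{L:Poly-LLN-Int} (controlling $\BE|X_{\pi_\delta(s)}^{\eps,\delta}-X_s^{\eps,\delta}|^{2p}\le C(\delta^{2p}+\eps^{2p}\delta^p)$), followed by AM--GM $\sqrt{ab}\le \tfrac12(a+b)$ to absorb the resulting cross-products, reduces the cross term to $O(\eps^{-p}(\delta^{2p}+\eps^{2p}+\eps^{2p}\delta^p))$; the pure second-order piece is smaller and fits into the same envelope.

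The main obstacle I anticipate is the $\mathsf{R}_1^{\eps,\delta}$ estimate, where polynomial growth of $D^2 f$ forces us to couple the uniform $q$-th moment bound from Lemma \ref{L:Poly-p-Moment} with the sharp uniform-in-time LLN rate of Theorem \ref{T:LLN-UIT} through a careful H\"older split, making sure no factor of $\eps^{-1}$ or time-dependence leaks into the constant. Once that step is in place, everything else is a bookkeeping exercise combining LLN, the sampling-gap bound of Lemma \ref{L:Poly-LLN-Int}, and the two applications of Assumption \ref{A:Poly-Int} needed to remove the exponential kernel.
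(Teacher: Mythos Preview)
Your proposal is correct and follows essentially the same strategy as the paper: Taylor-expand each $\mathsf{R}_i^{\eps,\delta}$ to second order, control the resulting quadratic error via the uniform-in-time {\sc lln} (Theorem \ref{T:LLN-UIT}), the moment bound (Lemma \ref{L:Poly-p-Moment}), and the sampling-gap estimate (Lemma \ref{L:Poly-LLN-Int}), and then remove the exponential kernel via Assumption \ref{A:Poly-Int}. The only cosmetic differences are that the paper handles the $p$-th power of the integral by the product expansion $\prod_i |\mathsf{R}(s_i)| \le C\sum_i |\mathsf{R}(s_i)|^p$ over $[0,t]^p$ rather than your Jensen/H\"older step, and for $\mathsf{R}_3^{\eps,\delta}$ it Taylor-expands $\kappa$ around $X_{\pi_\delta(s)}^{\eps,\delta}$ instead of $X_s^{\eps,\delta}$ (packaging the cross-term estimate as a separate Lemma \ref{L:R-3-interMedTerm}); both variants lead to the same bound.
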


We are now ready to prove Theorem \ref{T:fluctuations-R-1-2}.
\begin{proof}[Proof of Theorem \ref{T:fluctuations-R-1-2}]
Using the integral representations for the stochastic processes $Z_t^{\eps,\delta}$ and $Z_t$ given by equations \eqref{E:Fluctuation-R-1-2} and \eqref{E:Limiting-SDE-Fluctuations}, respectively, we have
\begin{equation*}
\begin{aligned}
Z_t^{\eps,\delta}-Z_t&= \int_0^t [Df(x_s)+D\kappa (x_s)](Z_s^{\eps,\delta}-Z_s)\thinspace ds + \int_0^t \left[\sigma(X_s^{\eps,\delta})-\sigma(x_s)\right]dW_s+ \int_0^t {\sf R}_s^{\eps,\delta}\thinspace ds\\
&\qquad \qquad \qquad \qquad \qquad \quad + \left[\frac{\cc}{2} \int_0^t D \kappa(x_s)[f(x_s)+\kappa(x_s)] \thinspace ds- \int_0^t D\kappa (x_s)\frac{X_s^{\varepsilon, \delta}-X_{\pi_\delta(s)}^{\varepsilon, \delta}}{\eps}\thinspace ds \right].
\end{aligned}
\end{equation*}
We now employ It{\^o}'s formula to the function $\varphi(t,x)=e^{-\int_0^t[Df(x_u)+D\kappa (x_u)]du}x$ to get
\begin{equation*}
\begin{aligned}
Z_t^{\eps,\delta}-Z_t&= \int_0^t e^{\int_s^t[Df(x_u)+D\kappa (x_u)]du}\left[\frac{\cc}{2}  D \kappa(x_s)[f(x_s)+\kappa(x_s)]- D\kappa (x_s)\frac{X_s^{\varepsilon, \delta}-X_{\pi_\delta(s)}^{\varepsilon, \delta}}{\eps}\right]ds \\
& \qquad \qquad + \int_0^t e^{\int_s^t[Df(x_u)+D\kappa (x_u)]du}\left[\sigma(X_s^{\eps,\delta})-\sigma(x_s)\right]dW_s+ \int_0^t e^{\int_s^t[Df(x_u)+D\kappa (x_u)]du} {\sf R}_s^{\eps,\delta}\thinspace ds.
\end{aligned}
\end{equation*}
Hence, for any integer $p\ge 1,$
\begin{equation}\label{E:Is}
\begin{aligned}
|Z_t^{\eps,\delta}-Z_t|^p&\le C \left|\int_0^t e^{\int_s^t[Df(x_u)+D\kappa (x_u)]du}\left[\frac{\cc}{2}D \kappa(x_s)[f(x_s)+\kappa(x_s)]- D\kappa (x_s)\frac{X_s^{\varepsilon, \delta}-X_{\pi_\delta(s)}^{\varepsilon, \delta}}{\eps}\right]ds\right|^p \\
&+C \left|\int_0^t e^{\int_s^t[Df(x_u)+D\kappa (x_u)]du}\left[\sigma(X_s^{\eps,\delta})-\sigma(x_s)\right]dW_s\right|^p+ C\left|\int_0^t e^{\int_s^t[Df(x_u)+D\kappa (x_u)]du} {\sf R}_s^{\eps,\delta}\thinspace ds\right|^p\\
& \triangleq C \left [I_1^{\eps,\delta}(t)+I_2^{\eps,\delta}(t)+I_3^{\eps,\delta}(t)\right].
\end{aligned}
\end{equation}
Finally, putting Propositions \ref{P:Central-Prop}, \ref{P:I2-Prop} and \ref{P:I3-Prop-Rem} together in equation \eqref{E:Is}, we get
\begin{equation*}\label{E:CLT-Fatou}
\sup_{ t \ge 0}\BE\left[|Z_t^{\eps,\delta}-Z_t|^p \right] \le \\ C\left[\frac{\delta^{2p}}{\eps^p}+\left|\frac{\delta}{\eps}-\cc \right|^p+\left(\frac{\delta^p}{\eps^p}+1 \right)(\delta^p+\eps^p+{\delta}^{\frac{p}{2}}\eps^p)^{\frac{1}{2}}+\delta^{p/2}+\eps^p\right].
\end{equation*}
%\left[\frac{\delta^{2p}}{\eps^p}+\left|\frac{\delta}{\eps}-\cc \right|^p+\frac{\delta^p}{\eps^p}(\delta^p+\eps^p+{\delta}^{\frac{p}{2}}\eps^p)+\delta^{p/2}+\eps^p+ \left(\delta^{2p}+\eps^{2p}+{\delta}^p\eps^{2p} \right)^{\frac{1}{2}} \right]
%Finally, the proof is completed by getting the bound for the term $\BE\left[\sup_{t\ge 0}|Z_t^{\eps,\delta}-Z_t|^p \right]$ which is done by employing Fatou's lemma and the fact that the constant $C$ in equation \eqref{E:CLT-Fatou} is independent of time. Indeed,
%\begin{multline*}
%\BE\left[Y_{\infty}^{\eps,\delta,p}\triangleq \sup_{t\ge 0}|Z_t^{\eps,\delta}-Z_t|^p\right] \le \liminf_{n \to \infty} \BE\left[Y_{t_n}^{\eps,\delta,p} \triangleq \sup_{0 \le s \le t_n}|Z_t^{\eps,\delta}-Z_t|^p\right]\le\\
% C\left[\frac{\delta^{2p}}{\eps^p}+\left|\frac{\delta}{\eps}-\cc \right|^p+\left(\frac{\delta^{2p}+\delta^p}{\eps^p}\right)(\delta^p+\eps^p+{\delta}^{\frac{p}{2}}\eps^p)+\delta^{p/2}+\eps^p+ \left(\delta^{2p}+\eps^{2p}+{\delta}^p\eps^{2p} \right)^{\frac{1}{2}} \right],
%\end{multline*}
%for any sequence $t_n \to \infty$ as $n\to \infty.$
\end{proof}

\subsection{Proof of Propositions \ref{P:Central-Prop} through \ref{P:I3-Prop-Rem}}\label{S:CLT-Proof-Prop}
In this section, we provide the proofs of Propositions \ref{P:Central-Prop} through \ref{P:I3-Prop-Rem}. To prove Proposition \ref{P:Central-Prop}, we first state (without proof) a series of auxiliary Lemmas \ref{L:M-terms} to \ref{L:M4-estimate} below. The proofs of the aforementioned lemmas are postponed to Section \ref{S:CLT-Lemmas-Proof}.

Moving in the direction of the proof of Proposition \ref{P:Central-Prop}, Lemma \ref{L:M-terms} gives the decomposition of the term $(1/\eps) D\kappa(x_s) \left[{X_s^{\varepsilon, \delta}-X_{\pi_\delta(s)}^{\varepsilon, \delta}}\right]$ which appears in the dynamics of fluctuations given by \eqref{E:Fluctuation-R-1-2}.
\begin{lemma}\label{L:M-terms}
Let $X_{t}^{\eps,\delta}$ be the solution of {\sc sde} \eqref{E:W-S-SDE-With-pert}. Then,
 $$(1/\eps) D\kappa(x_s) \left[{X_s^{\varepsilon, \delta}-X_{\pi_\delta(s)}^{\varepsilon, \delta}}\right]= \sum_{i=1}^{4} {\sfM}_i^{\eps,\delta}(s),\quad \text{where}$$
\begin{equation}\label{E:M1234}
\begin{aligned}
{\sf M}_1^{\eps,\delta}(s) & \triangleq (1/\eps) D\kappa (x_s) f(x_{\pi_\delta(s)})[{s-\pi_\delta(s)}], \qquad
{\sf M}_2^{\eps,\delta}(s) \triangleq (1/\eps) D\kappa (x_s)\kappa(X_{\pi_\delta(r)}^{\eps,\delta})[{s-\pi_\delta(s)}], \\
{\sf M}_3^{\eps,\delta}(s) & \triangleq \frac{1}{\eps} D\kappa (x_s)\int_{\pi_\delta(s)}^{s}\{{f(X_r^{\eps,\delta})-f(X_{\pi_\delta(r)}^{\eps,\delta})}\} dr  \\
 & \qquad \qquad \qquad \qquad \qquad \qquad \quad \qquad \quad+ \frac{1}{\eps} D\kappa (x_s) [{f(X_{\pi_\delta(s)}^{\varepsilon, \delta})-f(x_{\pi_\delta(s)})}][s-\pi_\delta(s)],\\
{\sf M}_4^{\eps,\delta}(s) & \triangleq  D\kappa (x_s) \int_{\pi_\delta(s)}^{s}\sigma(X_u^{\eps,\delta})dW_u.
\end{aligned}
\end{equation}
\end{lemma}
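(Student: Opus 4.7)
The plan is to establish the identity as a purely algebraic decomposition following directly from the integral form of the SDE \eqref{E:W-S-SDE-With-pert}. Since $\pi_\delta(r)=\pi_\delta(s)$ for every $r\in[\pi_\delta(s),s)$, integrating \eqref{E:W-S-SDE-With-pert} over $[\pi_\delta(s),s]$ gives
\begin{equation*}
X_s^{\eps,\delta}-X_{\pi_\delta(s)}^{\eps,\delta}
= \int_{\pi_\delta(s)}^{s} f(X_r^{\eps,\delta})\,dr + \kappa(X_{\pi_\delta(s)}^{\eps,\delta})[s-\pi_\delta(s)]
+ \eps \int_{\pi_\delta(s)}^{s} \sigma(X_u^{\eps,\delta})\,dW_u.
\end{equation*}
Multiplying through by $(1/\eps)D\kappa(x_s)$, the Brownian integral term is exactly $\sfM_4^{\eps,\delta}(s)$ (the factor $\eps$ cancels the $1/\eps$), and the second summand is precisely $\sfM_2^{\eps,\delta}(s)$.

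For the drift integral $\int_{\pi_\delta(s)}^{s} f(X_r^{\eps,\delta})\,dr$, the plan is to add and subtract two pivot values so as to isolate the deterministic reference. I would write
\begin{equation*}
f(X_r^{\eps,\delta}) = \bigl[f(X_r^{\eps,\delta})-f(X_{\pi_\delta(r)}^{\eps,\delta})\bigr] + \bigl[f(X_{\pi_\delta(s)}^{\eps,\delta})-f(x_{\pi_\delta(s)})\bigr] + f(x_{\pi_\delta(s)}),
\end{equation*}
using again that $\pi_\delta(r)=\pi_\delta(s)$ on the interval of integration. Integrating in $r$ over $[\pi_\delta(s),s]$, the final constant term contributes $f(x_{\pi_\delta(s)})[s-\pi_\delta(s)]$, which after multiplication by $(1/\eps)D\kappa(x_s)$ yields $\sfM_1^{\eps,\delta}(s)$. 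The first two pieces, under the same scaling, reproduce respectively the two summands defining $\sfM_3^{\eps,\delta}(s)$ in \eqref{E:M1234}.

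Since the decomposition is a direct consequence of the SDE and the constancy of $\pi_\delta(r)$ on each sampling interval, there is no real obstacle here; the content of the lemma is to fix a useful splitting that isolates (i) the leading deterministic $O(\delta/\eps)$ piece $\sfM_1+\sfM_2$ that will produce the limiting drift $\tfrac{\cc}{2}D\kappa(x_s)[f(x_s)+\kappa(x_s)]$ in Proposition \ref{P:Central-Prop}, (ii) the higher-order drift remainder $\sfM_3$ controlled via Lipschitz and LLN estimates, and (iii) the stochastic fluctuation $\sfM_4$ handled by martingale moment inequalities. The real work will come later when bounding each $\sfM_i^{\eps,\delta}$, not in proving this identity.
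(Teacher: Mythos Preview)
Your proposal is correct and follows essentially the same approach as the paper: both integrate the SDE over $[\pi_\delta(s),s]$, use the constancy $\pi_\delta(r)=\pi_\delta(s)$ on that interval, and add-and-subtract $f(X_{\pi_\delta(r)}^{\eps,\delta})$ and $f(x_{\pi_\delta(s)})$ to obtain the four terms. The paper does the add-and-subtract in two steps rather than one, but the algebraic content is identical.
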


Now, Lemma \ref{L:M1-estimate} stated below deals with the deterministic term ${\sf M}_1^{\eps,\delta}(t)$ and shows that ${\sf M}_1^{\eps,\delta}(t)$ is close, uniform-in-time, to the first part of the effective drift term (i.e, $\frac{\cc}{2}D\kappa(x_t)f(x_t)$, see \eqref{E:ell}).
\begin{lemma}\label{L:M1-estimate}
Let ${\sf M}_1^{\eps,\delta}(t)$ be defined as in equation \eqref{E:M1234} and $p\ge 1$ be an integer. Then, there exists a positive constant $C_{\ref{L:M1-estimate}}$ such that
\begin{equation*}
\sup_{t \ge 0}\left|\int_0^t e^{\int_s^t[Df(x_u)+D\kappa (x_u)]du}\left[{\sf M}_1^{\eps,\delta}(s)-\frac{\cc}{2}D\kappa(x_s)f(x_s) \right] ds \right|^p\le C_{\ref{L:M1-estimate}} \left|\frac{\delta}{\eps}-\cc \right|^p + \frac{\delta^{2p}}{\eps^p} C_{\ref{L:M1-estimate}}.
\end{equation*}
\end{lemma}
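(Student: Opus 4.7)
The plan is to exploit the zero-mean oscillation of $s-\pi_\delta(s)-\delta/2$ over each sampling period, which is precisely the mechanism producing the factor $\cc/2$ in the limit. Setting $\Phi(s,t):=e^{\int_s^t[Df(x_u)+D\kappa(x_u)]du}$, I would first peel off the $\cc$ versus $\delta/\eps$ discrepancy by writing $\tfrac{\cc}{2}f(x_s)=\tfrac{\delta}{2\eps}f(x_s)-\bigl(\tfrac{\delta}{2\eps}-\tfrac{\cc}{2}\bigr)f(x_s)$; the $(\tfrac{\delta}{2\eps}-\tfrac{\cc}{2})$ piece integrates to at most $C|\delta/\eps-\cc|\int_0^t\Phi(s,t)\,ds=O(|\delta/\eps-\cc|)$ by Assumption~\ref{A:Poly-Int}, since $|D\kappa\cdot f|$ is uniformly bounded on the bounded range of $x_\cdot$ (Lemma~\ref{L:Poly-p-Moment}).

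For the remaining term, I partition $[0,t]$ into full periods $I_k=[k\delta,(k+1)\delta]$ plus a trailing interval of length less than $\delta$. On each $I_k$ the relevant integrand splits as
$$\Phi(s,t)D\kappa(x_s)f(x_s)\cdot\frac{s-k\delta-\delta/2}{\eps}\;+\;\Phi(s,t)D\kappa(x_s)\frac{s-k\delta}{\eps}\bigl[f(x_{k\delta})-f(x_s)\bigr].$$
The second summand is bounded pointwise by $C\Phi(s,t)(s-k\delta)^2/\eps$ since $s\mapsto f(x_s)$ is Lipschitz on the bounded range of $x$; it contributes $O(\delta^2/\eps)\int_{I_k}\Phi(s,t)\,ds$. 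For the first summand I use the identity $\int_{I_k}(s-k\delta-\delta/2)\,ds=0$ to subtract off the value of $g(s):=\Phi(s,t)D\kappa(x_s)f(x_s)$ at the midpoint $k\delta+\delta/2$. Using $\partial_s\Phi(s,t)=-[Df+D\kappa](x_s)\Phi(s,t)$ together with uniform bounds on $Df,D\kappa,D^2\kappa,f,\kappa$ (and the polynomially-growing $D^2f$, which is thus bounded on the bounded range of $x_\cdot$), one obtains $|g'(s)|\le C\Phi(s,t)$. A Taylor expansion then bounds the first summand by $C\delta^3/\eps\cdot\sup_{I_k}\Phi(\cdot,t)$; converting $\sup_{I_k}\Phi$ into $C\delta^{-1}\int_{I_k}\Phi(s,t)\,ds$ via the ratio bound $\Phi(u,t)/\Phi(u',t)\le e^{C\delta}\le C$ for $u,u'\in I_k$ yields a further $O(\delta^2/\eps)\int_{I_k}\Phi(s,t)\,ds$.

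Summing over $k$ and using $\int_0^t\Phi(s,t)\,ds\le C$ (Assumption~\ref{A:Poly-Int}) gives $O(\delta^2/\eps)$. The trailing partial interval is handled directly: after the $\delta/(2\eps)$ rewriting, its integrand is $O(\delta/\eps)$ on a set of measure less than $\delta$, so contributes $O(\delta^2/\eps)$ without any cancellation. The absolute value inside the $\sup_{t\ge 0}$ is therefore bounded by $C\delta^2/\eps+C|\delta/\eps-\cc|$ uniformly in $t$, and $(a+b)^p\le C_p(a^p+b^p)$ delivers the claimed $p$-th power estimate. The main technical obstacle is the absence of pointwise control on $\Phi(s,t)$: every factor of $\delta$ obtained from a Taylor remainder must be coupled with the time-average $\int_{I_k}\Phi(s,t)\,ds$ rather than with $\sup_{I_k}\Phi$ alone, which is made possible by the uniform $C^1$-in-$s$ estimates on $\Phi$, $D\kappa(x_s)$ and $f(x_s)$ driven by the uniform bound $\sup_t|x_t|<\infty$ from Lemma~\ref{L:Poly-p-Moment}.
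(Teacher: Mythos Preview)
Your argument is correct and yields the stated bound. It differs from the paper's proof in the way the oscillatory cancellation is extracted. The paper first freezes $D\kappa(x_s)$ at $s=\pi_\delta(s)$ (absorbing the error via Lemma~\ref{L:Sec-der-Kappa}), then on each interval $[i\delta,(i+1)\delta]$ integrates $(s-i\delta)\Phi(s,t)$ by parts to produce the factor $\delta^2/2$, and finally compares the resulting sum $\sum_i \delta\, D\kappa(x_{i\delta})f(x_{i\delta})\Phi((i+1)\delta,t)$ to the integral $\int_0^t D\kappa(x_s)f(x_s)\Phi(s,t)\,ds$ via a Riemann-sum estimate (Lemmas~\ref{L:M-1-R-term}, \ref{L:R-1-Rem}, \ref{L:R-2-3-Rem}). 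You instead leave the coefficients smooth, replace $f(x_{\pi_\delta(s)})$ by $f(x_s)$ directly (your second summand), and then exploit $\int_{I_k}(s-k\delta-\delta/2)\,ds=0$ together with a first-order Taylor expansion of $g(s)=\Phi(s,t)D\kappa(x_s)f(x_s)$ about the midpoint. Your route is a bit more direct, avoiding the separate Riemann-sum lemma, at the cost of needing the $C^1$ control of $g$ and the ratio bound $\sup_{I_k}\Phi\le C\delta^{-1}\int_{I_k}\Phi$; the paper's route packages the same ingredients slightly differently but uses identical structural inputs (Assumption~\ref{A:Poly-Int} and the uniform bound on $x_\cdot$ from Lemma~\ref{L:Poly-p-Moment}).
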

Next, Lemma \ref{L:M2-estimate} handles the asymptotic analysis, uniform-in-time, of the process ${\sf M}_2^{\eps,\delta}(t)$ as $\eps,\delta$ tend to zero and shows that ${\sf M}_2^{\eps,\delta}(t)$ is close to the second part of the effective drift term (i.e, $\frac{\cc}{2}D\kappa(x_t)\kappa(x_t)$, see equation \eqref{E:ell}).

\begin{lemma}\label{L:M2-estimate}
Let ${\sf M}_2^{\eps,\delta}(t)$ be defined as in equation \eqref{E:M1234} and $p\ge 1$ be an integer. Then, for all sufficiently small $\eps,\delta>0,$ there exists a positive constant $C_{\ref{L:M2-estimate}}$ such that
\begin{multline*}
\sup_{t \ge 0}\BE\left|\int_0^t e^{\int_s^t[Df(x_u)+D\kappa (x_u)]du}\left[{\sf M}_2^{\eps,\delta}(s)-\frac{\cc}{2}D\kappa(x_s)\kappa(x_s) \right] ds \right|^p\\
\le C_{\ref{L:M2-estimate}}\left[\frac{\delta^{2p}}{\eps^p}+\left|\frac{\delta}{\eps}-\cc \right|^p+\left(\frac{\delta^{2p}+\delta^p}{\eps^p}\right)(\delta^p+\eps^p+{\delta}^{\frac{p}{2}}\eps^p) \right].
\end{multline*}
\end{lemma}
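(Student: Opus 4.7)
The strategy is to isolate three distinct sources of error inside
$$\sfM_2^{\eps,\delta}(s)-\tfrac{\cc}{2}D\kappa(x_s)\kappa(x_s)=\tfrac{1}{\eps}D\kappa(x_s)\kappa(X^{\eps,\delta}_{\pi_\delta(s)})(s-\pi_\delta(s))-\tfrac{\cc}{2}D\kappa(x_s)\kappa(x_s),$$
by adding and subtracting $\tfrac{1}{\eps}D\kappa(x_s)\kappa(x_s)(s-\pi_\delta(s))$ and $\tfrac{\delta}{2\eps}D\kappa(x_s)\kappa(x_s)$. This produces three summands $\Theta_1(s)+\Theta_2(s)+\Theta_3(s)$, where $\Theta_1(s)=\tfrac{s-\pi_\delta(s)}{\eps}D\kappa(x_s)[\kappa(X^{\eps,\delta}_{\pi_\delta(s)})-\kappa(x_s)]$ captures the {\sc lln}-replacement of $X^{\eps,\delta}_{\pi_\delta(s)}$ by $x_s$, $\Theta_2(s)=D\kappa(x_s)\kappa(x_s)\tfrac{(s-\pi_\delta(s))-\delta/2}{\eps}$ captures the sampling-interval averaging, and $\Theta_3(s)=D\kappa(x_s)\kappa(x_s)\bigl[\tfrac{\delta}{2\eps}-\tfrac{\cc}{2}\bigr]$ is purely deterministic. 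After multiplying by the weight $e^{\int_s^t[Df+D\kappa](x_u)\,du}$ and integrating over $[0,t]$, I would control each piece separately at the $p$-th moment level.

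The $\Theta_3$-piece is immediate: boundedness of $\kappa,D\kappa$ combined with Assumption \ref{A:Poly-Int} gives $\bigl|\int_0^t e^{\int_s^t[\cdots]du}\Theta_3(s)\,ds\bigr|^p\le C|\tfrac{\delta}{\eps}-\cc|^p$, producing the middle term of the claimed bound. For $\Theta_1$ I would apply the triangle inequality $|\kappa(X^{\eps,\delta}_{\pi_\delta(s)})-\kappa(x_s)|\le L_\kappa\bigl(|X^{\eps,\delta}_{\pi_\delta(s)}-x_{\pi_\delta(s)}|+|x_{\pi_\delta(s)}-x_s|\bigr)$: the first summand is controlled in $p$-th moment by Theorem \ref{T:LLN-UIT}, yielding $C(\delta^p+\eps^p+\delta^{p/2}\eps^p)$, while the second is bounded deterministically by $C\delta$ via Lipschitzness of $x_\cdot$ in time (since $\dot x=f(x)+\kappa(x)$ is uniformly bounded on the range of $x_\cdot$ by Lemma \ref{L:Poly-p-Moment} and Assumption \ref{A:Poly-growth}). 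Combined with the prefactor $(s-\pi_\delta(s))/\eps\le\delta/\eps$, the same iterated-integral / AM-GM device that appears in the proof of Theorem \ref{T:LLN-UIT}, together with Assumption \ref{A:Poly-Int}, delivers the moment bound accounting for the first and third terms of the stated inequality.

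The $\Theta_2$-piece is the main obstacle and requires a symmetric-interval averaging argument with an exponential weight. Setting $g(s)\triangleq e^{\int_s^t[Df+D\kappa](x_u)\,du}D\kappa(x_s)\kappa(x_s)$ and partitioning $[0,t]$ into sampling intervals $[k\delta,(k+1)\delta]$, the key observation is the algebraic cancellation $\int_{k\delta}^{(k+1)\delta}(s-k\delta-\delta/2)\,ds=0$, which lets me rewrite each full interval's contribution as
$$\tfrac{1}{\eps}\int_{k\delta}^{(k+1)\delta}\bigl[g(s)-g(k\delta+\delta/2)\bigr](s-k\delta-\delta/2)\,ds.$$
Lemma \ref{L:Poly-p-Moment} confines $x_\cdot$ to a bounded set on which $f,\kappa,Df,D\kappa,D^2\kappa$ are all uniformly bounded (using Assumption \ref{A:Poly-Int} for the second-derivative control), yielding $|g'(s)|\le C\,e^{\int_s^t[Df+D\kappa](x_u)\,du}$. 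A first-order Taylor bound then controls each per-interval contribution by $C\delta^3\eps^{-1}\sup_{s\in[k\delta,(k+1)\delta]}e^{\int_s^t[\cdots]du}$. The resulting sum over $k$ is a Riemann approximation of $\int_0^t e^{\int_s^t[\cdots]du}\,ds$, which Assumption \ref{A:Poly-Int} bounds by a time-independent constant; the residual partial interval $[\lfloor t/\delta\rfloor\delta,\,t]$ contributes $O(\delta^2/\eps)$ by a direct estimate. Raising the resulting pathwise bound $C\delta^2/\eps$ to the $p$-th power delivers the $C\delta^{2p}/\eps^p$ summand. The reason Assumption \ref{A:Poly-Int} is indispensable here is that the naive $L^\infty$ bound $|(s-\pi_\delta(s))-\delta/2|\eps^{-1}\le\delta/\eps$ would yield an $O(\delta t/\eps)$ estimate that blows up with $t$; only the symmetric cancellation together with the decaying exponential weight keeps the sum over infinitely many sampling intervals uniformly bounded in time.
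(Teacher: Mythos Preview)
Your proof is correct, but the route differs from the paper's in two respects worth noting.

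\textbf{Decomposition.} The paper uses a five-term splitting rather than your three: it first replaces $D\kappa(x_s)$ by $D\kappa(x_{\pi_\delta(s)})$ (at cost $O(\delta)$ via Lemma~\ref{L:Sec-der-Kappa}), then separates the $\kappa(X^{\eps,\delta}_{\pi_\delta(s)})-\kappa(x_{\pi_\delta(s)})$ piece from the purely deterministic $\kappa(x_{\pi_\delta(s)})(s-\pi_\delta(s))-\tfrac{\delta}{2}\kappa(x_s)$ piece. Your $\Theta_1$ absorbs the $|x_{\pi_\delta(s)}-x_s|$ error that the paper splits off as a separate term; this is why you obtain only the $\tfrac{\delta^p}{\eps^p}$ coefficient in front of the {\sc lln} error, whereas the paper records $\tfrac{\delta^{2p}+\delta^p}{\eps^p}$ (the extra $\delta^{2p}$ summand is redundant, so your bound is in fact slightly sharper but still within the stated estimate).

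\textbf{The sampling-averaging term.} For your $\Theta_2$, the paper invokes Lemma~\ref{L:M-2-S-2}, whose proof (identical to Lemma~\ref{L:M-1-S-1}) proceeds via integration by parts on $\int_{i\delta}^{(i+1)\delta}(s-i\delta)e^{\int_s^t[\cdots]du}\,ds$ and then compares the resulting Riemann sum $\sum_i\delta e^{\int_{(i+1)\delta}^t[\cdots]du}$ with $\int_0^t e^{\int_s^t[\cdots]du}\,ds$ through the chain of Lemmas~\ref{L:R-1-Rem}--\ref{L:M-1-R-term}. Your midpoint-cancellation argument $\int_{k\delta}^{(k+1)\delta}(s-k\delta-\tfrac{\delta}{2})\,ds=0$ followed by a first-order Taylor bound on $g$ is more direct and avoids this entire chain of lemmas. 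The paper's approach has the advantage of modularity (the same auxiliary lemmas serve both $\sfM_1$ and $\sfM_2$); yours is more elementary. One point to make explicit in your write-up: converting $\sum_k\delta\sup_{[k\delta,(k+1)\delta]}e^{\int_s^t[\cdots]du}$ into $C\int_0^t e^{\int_s^t[\cdots]du}\,ds$ requires the observation that the exponential weight varies by at most a factor $e^{C\delta}$ over each sampling interval, since $|Df+D\kappa|(x_\cdot)$ is bounded on the compact range of $x_\cdot$; this is exactly the argument in the paper's proof of Lemma~\ref{L:R-1-Rem} around equation~\eqref{E:R_1-CLT-Intermediate}.
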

Finally, Lemmas \ref{L:M3-estimate} and \ref{L:M4-estimate} deal with the processes ${\sf M}_3^{\eps,\delta}(t)$ and ${\sf M}_4^{\eps,\delta}(t)$, respectively and show that the terms of interest are small in uniform time setting.

\begin{lemma}\label{L:M3-estimate}
Let ${\sf M}_3^{\eps,\delta}(t)$ be defined as in equation \eqref{E:M1234} and $p\ge 1$ be an integer. Then, for all sufficiently small $\eps,\delta>0,$ there exists a positive constant $C_{\ref{L:M3-estimate}}$ such that
\begin{equation*}
\sup_{t \ge 0}\BE\left| \int_0^t e^{\int_s^t[Df(x_u)+D\kappa (x_u)]du}{\sf M}_3^{\eps,\delta}(s) \thinspace ds \right|^p \le C_{\ref{L:M3-estimate}}\frac{\delta^p}{\eps^p}(\delta^p+\eps^p+{\delta}^{\frac{p}{2}}\eps^p)+C_{\ref{L:M3-estimate}}\frac{\delta^{2p}}{\eps^p}.
\end{equation*}
\end{lemma}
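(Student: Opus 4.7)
The plan is to decompose ${\sf M}_3^{\eps,\delta}(s)=T_1^{\eps,\delta}(s)+T_2^{\eps,\delta}(s)$ according to the two summands in \eqref{E:M1234} and control the $L^p$-norm of $\int_0^t e^{\int_s^t[Df(x_u)+D\kappa(x_u)]du}T_i^{\eps,\delta}(s)\,ds$ separately. The common tool will be the weighted H\"{o}lder inequality
\begin{equation*}
\left(\int_0^t h(s,t)\, g(s)\, ds\right)^p \le \left(\int_0^t h(s,t)\, ds\right)^{p-1}\int_0^t h(s,t)\,|g(s)|^p\,ds,
\end{equation*}
applied with the weight $h(s,t)=e^{\int_s^t[Df(x_u)+D\kappa(x_u)]\,du}$, so that Assumption \ref{A:Poly-Int} (case $m=1$) gives the uniform-in-time bound $\int_0^t h(s,t)\,ds\le C$ and reduces the problem to controlling $\BE|g(s)|^p$ uniformly in $s$, with the outer integration in $s$ again absorbed via Assumption \ref{A:Poly-Int}.

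For the first summand $T_1^{\eps,\delta}(s)=\eps^{-1}D\kappa(x_s)\int_{\pi_\delta(s)}^s[f(X_r^{\eps,\delta})-f(X_{\pi_\delta(r)}^{\eps,\delta})]\,dr$, I would apply H\"{o}lder once more on the inner integral of length at most $\delta$ to obtain a bound of the form $C\delta^{p-1}\eps^{-p}\int_{\pi_\delta(s)}^s|f(X_r^{\eps,\delta})-f(X_{\pi_\delta(r)}^{\eps,\delta})|^p\,dr$ inside. The local polynomial Lipschitz estimate from Assumption \ref{A:Poly-Lip} combined with Cauchy-Schwarz separates a polynomial-growth factor (uniformly bounded by Lemma \ref{L:Poly-p-Moment}) from the $2p$-th moment $\BE|X_r^{\eps,\delta}-X_{\pi_\delta(r)}^{\eps,\delta}|^{2p}\le C(\delta^{2p}+\eps^{2p}\delta^p)$ from Lemma \ref{L:Poly-LLN-Int}. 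Taking the square root yields $\BE|f(X_r^{\eps,\delta})-f(X_{\pi_\delta(r)}^{\eps,\delta})|^p\le C(\delta^p+\eps^p\delta^{p/2})$, and the $r$- and $s$-integrations produce the contribution $C(\delta^p/\eps^p)(\delta^p+\eps^p\delta^{p/2})$, which is absorbed into $C\delta^{2p}/\eps^p$ (up to a subleading $\delta^{3p/2}$ term).

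For the second summand $T_2^{\eps,\delta}(s)=\eps^{-1}D\kappa(x_s)[f(X_{\pi_\delta(s)}^{\eps,\delta})-f(x_{\pi_\delta(s)})](s-\pi_\delta(s))$, the factor $s-\pi_\delta(s)\le\delta$ immediately extracts a $\delta/\eps$ prefactor. The same local-Lipschitz/Cauchy-Schwarz argument now reduces the matter to estimating $\BE|X_{\pi_\delta(s)}^{\eps,\delta}-x_{\pi_\delta(s)}|^{2p}$, which by the uniform-in-time {\sc lln} of Theorem \ref{T:LLN-UIT} is bounded by $C(\delta^{2p}+\eps^{2p}+\delta^p\eps^{2p})$, whose square root is $C(\delta^p+\eps^p+\delta^{p/2}\eps^p)$. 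This delivers the second contribution $C(\delta^p/\eps^p)(\delta^p+\eps^p+\delta^{p/2}\eps^p)$ in the claim.

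The main obstacle is the singular prefactor $1/\eps$: each summand must be shown to be small \emph{uniformly in time} at a rate beating $\eps^p$. For $T_1^{\eps,\delta}$ this is achieved by extracting two factors of $\delta$ (one from the inner integral, one from the H\"{o}lder step), which together overpower $1/\eps$ as long as $\delta/\eps$ remains bounded (true in both Regime 1 and Regime 2). For $T_2^{\eps,\delta}$ only a single $\delta$ is available, and the compensating smallness must come from the uniform-in-time {\sc lln}; here Assumption \ref{A:Poly-Int} is essential, as it allows the non-constant pointwise estimate to be integrated against the exponential weight $h(s,t)$ without spoiling time-uniformity. Combining the two pieces via $|a+b|^p\le C(|a|^p+|b|^p)$ yields the stated bound.
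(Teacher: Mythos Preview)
Your argument is correct and follows essentially the same approach as the paper: split ${\sf M}_3^{\eps,\delta}$ into its two summands, pull out the $\delta/\eps$ factors, invoke the boundedness of $D\kappa$, and then control the $f$-increments via the local polynomial Lipschitz condition of Assumption~\ref{A:Poly-Lip} combined with H\"{o}lder/Cauchy--Schwarz, the moment bound of Lemma~\ref{L:Poly-p-Moment}, and Assumption~\ref{A:Poly-Int} for the time-uniform integration against the exponential weight.

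The one small difference lies in how you handle $T_1^{\eps,\delta}$: you bound $\BE|f(X_r^{\eps,\delta})-f(X_{\pi_\delta(r)}^{\eps,\delta})|^p$ directly via Lemma~\ref{L:Poly-LLN-Int}, whereas the paper first inserts the deterministic trajectory, writing $f(X_r^{\eps,\delta})-f(X_{\pi_\delta(r)}^{\eps,\delta})=[f(X_r^{\eps,\delta})-f(x_r)]-[f(X_{\pi_\delta(r)}^{\eps,\delta})-f(x_{\pi_\delta(r)})]+[f(x_r)-f(x_{\pi_\delta(r)})]$ and then applies Theorem~\ref{T:LLN-UIT} to the first two pieces and Lemma~\ref{L:Sampling-Difference} to the third. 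Your route is slightly more direct and gives the same order $\delta^{2p}/\eps^p+\delta^{3p/2}$ for this summand; the paper's route has the mild advantage of reusing the {\sc lln} bound (which must be proved anyway) rather than a separate stochastic increment lemma. For $T_2^{\eps,\delta}$ both arguments coincide.
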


\begin{lemma}\label{L:M4-estimate}
Let ${\sf M}_4^{\eps,\delta}(t)$ be defined as in equation \eqref{E:M1234} and $p\ge 1$ be an integer. Then, there exists a positive constant $C_{\ref{L:M4-estimate}}$ such that
\begin{equation*}
\sup_{t \ge 0}\BE\left| \int_0^t e^{\int_s^t[Df(x_u)+D\kappa (x_u)]du}{\sf M}_4^{\eps,\delta}(s) \thinspace ds \right|^p \le C_{\ref{L:M4-estimate}}\delta^{\frac{p}{2}}.
\end{equation*}
\end{lemma}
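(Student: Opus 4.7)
My plan is to mimic the iterated-integral decomposition used earlier in the excerpt (e.g., in the sketch leading up to Lemmas \ref{L:LLN-Second-moment-bound-component-2} and \ref{L:LLN-Second-moment-bound-component-3}). The key structural observation is that ${\sf M}_4^{\eps,\delta}(s)$ is the product of the uniformly bounded deterministic factor $D\kappa(x_s)$ (bounded by Assumption \ref{A:Poly-Int}) and a genuine martingale increment $\int_{\pi_\delta(s)}^s \sigma(X_u^{\eps,\delta})\,dW_u$ over a window of length at most $\delta$. Crucially, no small parameter $\eps$ appears in this increment, so Burkholder--Davis--Gundy combined with the boundedness of $\sigma$ (Assumption \ref{A:Poly-growth}) will directly yield a bound of order $\delta^{p/2}$ on the $p$-th moment of that increment, without invoking any moment bound on $X^{\eps,\delta}_{t}$.

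Concretely, I would first write
\[
\left|\int_0^t e^{\int_s^t[Df(x_u)+D\kappa(x_u)]du}{\sf M}_4^{\eps,\delta}(s)\,ds\right|^p = \int_{[0,t]^p} \prod_{i=1}^p e^{\int_{s_i}^t[Df(x_u)+D\kappa(x_u)]du}\,{\sf M}_4^{\eps,\delta}(s_i)\,ds_1\cdots ds_p,
\]
use $|{\sf M}_4^{\eps,\delta}(s_i)| \le \|D\kappa\|_\infty \bigl|\int_{\pi_\delta(s_i)}^{s_i}\sigma(X_u^{\eps,\delta})\,dW_u\bigr|$, and apply the elementary inequality $a_1\cdots a_p \le C(a_1^p+\cdots+a_p^p)$ (stated in the Notation section) to the product of the $p$ stochastic increments. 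After passing to expectation inside the iterated integral via Fubini and Tonelli, the martingale moment inequality gives, for each index $i$,
\[
\BE\left|\int_{\pi_\delta(s_i)}^{s_i}\sigma(X_u^{\eps,\delta})\,dW_u\right|^p \le C\,\BE\left(\int_{\pi_\delta(s_i)}^{s_i}\sigma^2(X_u^{\eps,\delta})\,du\right)^{p/2} \le C\,\|\sigma\|_\infty^p\,\delta^{p/2}.
\]

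The resulting expression decomposes into $p$ terms, one for each choice of the index $j$ that carries the stochastic factor. Each such term is a product of one integral $\int_0^t e^{\int_{s_j}^t[Df(x_u)+D\kappa(x_u)]du}\,ds_j$ multiplied by the $\delta^{p/2}$ gained from the martingale bound, together with $p-1$ further integrals of the same pure-kernel form. All of these are uniformly bounded in $t$ by Assumption \ref{A:Poly-Int} with $m=1$. Combining everything gives $\sup_{t\ge 0}\BE|\cdots|^p \le C\,\delta^{p/2}$, as claimed.

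I do not anticipate any substantive obstacle for this lemma: unlike Lemmas \ref{L:M1-estimate}--\ref{L:M3-estimate}, there is no cancellation with an effective drift, no ratio $\delta/\eps$, and no dependence on $\sup_{t}\BE|X_t^{\eps,\delta}|^p$ or on the {\sc lln} of Theorem \ref{T:LLN-UIT}. The only point requiring care is the bookkeeping after AM-GM, so that exactly one of the $p$ factors is used to extract the $\delta^{p/2}$ via BDG, while the remaining $p-1$ spatial integrals are collapsed to time-independent constants using Assumption \ref{A:Poly-Int}.
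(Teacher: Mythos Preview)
Your proposal is correct and follows essentially the same route as the paper: bound the outer integral by $\int_0^t e^{\int_s^t[\cdots]du}\,|D\kappa(x_s)|\,\bigl|\int_{\pi_\delta(s)}^s\sigma\,dW_u\bigr|\,ds$, expand the $p$-th power as a product over $[0,t]^p$, apply the AM--GM inequality $\prod a_i \le C\sum a_i^p$ to the stochastic factors, take expectations, use the martingale moment inequality together with $\|\sigma\|_\infty<\infty$ and $s-\pi_\delta(s)\le\delta$ to extract $\delta^{p/2}$, and collapse the remaining kernel integrals via Assumption~\ref{A:Poly-Int}. One cosmetic remark: your displayed ``$=$'' should be an inequality (after first pulling the absolute value inside), and the boundedness of $D\kappa$ comes from the Lipschitz condition in Assumption~\ref{A:Poly-Lip} rather than Assumption~\ref{A:Poly-Int}.
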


We now prove Proposition \ref{P:Central-Prop}.
\begin{proof}[Proof of Proposition \ref{P:Central-Prop}]
For any integer $p\ge 1,$ using Lemma \ref{L:M-terms} for the decomposition of the process  $D\kappa (x_s)\frac{X_s^{\varepsilon, \delta}-X_{\pi_\delta(s)}^{\varepsilon, \delta}}{\eps}$, we have
\begin{equation*}
\begin{aligned}
\sup_{t \ge 0}\BE \left[I_1^{\eps,\delta}(t)\right] & \le C\sup_{t\ge 0}\left|\int_0^t e^{\int_s^t[Df(x_u)+D\kappa (x_u)]du}\left[{\sf M}_1^{\eps,\delta}(s)-\frac{\cc}{2}D\kappa(x_s)f(x_s) \right] ds \right|^p\\
& \qquad \quad  + C \sup_{t \ge 0} \BE\left|\int_0^t e^{\int_s^t[Df(x_u)+D\kappa (x_u)]du}\left[{\sf M}_2^{\eps,\delta}(s)-\frac{\cc}{2}D\kappa(x_s)\kappa(x_s) \right] ds \right|^p\\
& \qquad \qquad \qquad \qquad \qquad \qquad \qquad + C \sup_{t \ge 0}\BE \left| \int_0^t e^{\int_s^t[Df(x_u)+D\kappa (x_u)]du}{\sf M}_3^{\eps,\delta}(s) \thinspace ds \right|^p\\
& \qquad \qquad \qquad \qquad \qquad \qquad \qquad \qquad+ C \sup_{t \ge 0}\BE \left| \int_0^t e^{\int_s^t[Df(x_u)+D\kappa (x_u)]du}{\sf M}_4^{\eps,\delta}(s) \thinspace ds \right|^p.
\end{aligned}
\end{equation*}
Now, employing Lemmas \ref{L:M1-estimate}, \ref{L:M2-estimate}, \ref{L:M3-estimate} and \ref{L:M4-estimate} for the terms on the right hand side of the above equation, we get the required bound.
\end{proof}

Next, we present the proof of Proposition \ref{P:I2-Prop}.

\begin{proof}[Proof of Proposition \ref{P:I2-Prop}]
Using the Burkholder-Davis-Gundy inequalities followed by Jensen's inequality\footnote{For a random variable $Y$ and a concave function $\psi$ with $\BE|Y|, \BE|\psi(Y)|<\infty$, we have $\BE[\psi(Y)]\le \psi(\BE[Y])$.} for concave functions, we have
\begin{equation*}
\begin{aligned}
\BE \left[I_2^{\eps,\delta}(t)\right] &\le C  \BE \left[\left(\int_0^t e^{2\int_s^t[Df(x_u)+D\kappa (x_u)]du}|\sigma(X_s^{\eps,\delta})-\sigma(x_s)|^2   ds\right)^{\frac{p}{2}}\right]\\
& \qquad \qquad \qquad \qquad \qquad \qquad  \le C  \left[\BE\left(\int_0^t e^{2\int_s^t[Df(x_u)+D\kappa (x_u)]du}|\sigma(X_s^{\eps,\delta})-\sigma(x_s)|^2   ds\right)^{p}\right]^{\frac{1}{2}}.
\end{aligned}
\end{equation*}
Now, employing the Lipschitz continuity of $\sigma,$ and Assumption \ref{A:Poly-Int},
% $\sup_{u\ge 0}[Df(x_u)+D\kappa (x_u)]\le -\lambda$ for some $\lambda> 0,$
we obtain from the above equation
\begin{equation*}
\begin{aligned}
\BE \left[I_2^{\eps,\delta}(t)\right]&\le C  \left[\BE \left(\int_0^t e^{2\int_s^t[Df(x_u)+D\kappa (x_u)]du}|X_s^{\eps,\delta}-x_s|^2ds\right)^p\right]^{\frac{1}{2}}\\
& \qquad = C \left[\BE \int_{[0,t]^p}\left( \prod_{i=1}^p e^{2\int_s^t[Df(x_u)+D\kappa (x_u)]du}|X_{s_i}^{\eps,\delta}-x_{s_i}|^2\right)ds_1\cdots ds_p \right]^{\frac{1}{2}}\\
& \qquad \qquad \le C \left[\BE \int_{[0,t]^p}\left( \prod_{i=1}^p e^{2\int_s^t[Df(x_u)+D\kappa (x_u)]du}\right)\left(\sum_{i=1}^p|X_{s_i}^{\eps,\delta}-x_{s_i}|^{2p}\right)ds_1\cdots ds_p \right]^{\frac{1}{2}}\\
& \qquad \qquad \qquad \quad \le C \left[ \int_{[0,t]^p}\left( \prod_{i=1}^p e^{2\int_s^t[Df(x_u)+D\kappa (x_u)]du}\right)\left(\sum_{i=1}^p \BE |X_{s_i}^{\eps,\delta}-x_{s_i}|^{2p}\right)ds_1\cdots ds_p \right]^{\frac{1}{2}}.
\end{aligned}
\end{equation*}
After using a simple algebra and Theorem \ref{T:LLN-UIT}, we obtain the desired result.
\end{proof}

Finally, we prove Proposition \ref{P:I3-Prop-Rem}.
%we state two helpful results Proposition \ref{P:CLT-R-12} and \ref{P:CLT-R-3} below. The proofs of these results are presented in Section \ref{S:Error-Prop-proof}.

\begin{proof}[Proof of Proposition \ref{P:I3-Prop-Rem}]
For any integer $p\ge 1,$ recalling the definition of the process $I_3^{\eps,\delta}(t)$ from equation \eqref{E:I3}, we have
\begin{equation*}
\sup_{t \ge 0}\BE \left[I_3^{\eps,\delta}(t)\right] \le \sup_{t \ge 0}\BE \left[{\sf R}_1^{\eps,\delta}(t)\right] +\sup_{t \ge 0}\BE \left[{\sf R}_2^{\eps,\delta}(t)\right] +\sup_{t \ge 0}\BE \left[{\sf R}_3^{\eps,\delta}(t)\right],
\end{equation*}
where ${\sf R}_i^{\eps,\delta}(s),\thinspace i=1,2,3$ are defined in equation \eqref{E:Remainder-terms}. Using Proposition \ref{P:CLT-R-12} for the terms \\ $\sup_{t \ge 0}\BE \left[{\sf R}_1^{\eps,\delta}(t)\right]$ and $\sup_{t \ge 0}\BE \left[{\sf R}_2^{\eps,\delta}(t)\right]$ and Proposition \ref{P:CLT-R-3} for the term $\sup_{t \ge 0}\BE \left[{\sf R}_3^{\eps,\delta}(t)\right]$, we obtain the required bound.
\end{proof}

\subsection{Proof of Lemmas \ref{L:M-terms} through \ref{L:M4-estimate}}\label{S:CLT-Lemmas-Proof}
\begin{proof}[Proof of Lemma \ref{L:M-terms}]
For any $s \ge 0,$ exploiting the integral representation for $X_s^{\eps,\delta}$ from \eqref{E:W-S-SDE-With-pert} followed by a simple algebra, we get
\begin{equation*}
\begin{aligned}
\frac{X_s^{\eps,\delta}-X^{\eps,\delta}_{\pi_\delta(s)}}{\eps}&=\int_{\pi_\delta(s)}^{s}\frac{f(X_r^{\eps,\delta})+\kappa(X_{\pi_\delta(r)}^{\eps,\delta})}{\eps}\thinspace dr + \int_{\pi_\delta(s)}^{s}\sigma(X_u^{\eps,\delta})dW_u\\
&= \int_{\pi_\delta(s)}^{s} \frac{f(X_r^{\eps,\delta})-f(X_{\pi_\delta(r)}^{\eps,\delta})}{\eps} \thinspace dr + \int_{\pi_\delta(s)}^{s} \frac{f(X_{\pi_\delta(r)}^{\eps,\delta})+\kappa(X_{\pi_\delta(r)}^{\eps,\delta})}{\eps}\thinspace dr \\
& \qquad \qquad  \qquad \qquad \qquad \qquad \qquad \qquad \qquad \qquad \qquad \qquad \quad+ \int_{\pi_\delta(s)}^{s}\sigma(X_u^{\eps,\delta})dW_u.
\end{aligned}
\end{equation*}
Writing $f(X_{\pi_\delta(r)}^{\eps,\delta})+\kappa(X_{\pi_\delta(r)}^{\eps,\delta})$ in the middle term of the above equation as $f(X_{\pi_\delta(r)}^{\eps,\delta})-f(x_{\pi_\delta(s)})+\kappa(X_{\pi_\delta(r)}^{\eps,\delta})+f(x_{\pi_\delta(s)})$, we see that
\begin{multline*}
D\kappa(x_s)\frac{X_s^{\varepsilon, \delta}-X_{\pi_\delta(s)}^{\varepsilon, \delta}}{\eps} = D\kappa(x_s) f(x_{\pi_\delta(s)})\frac{s-\pi_\delta(s)}{\eps} + D\kappa(x_s)\int_{\pi_\delta(s)}^{s}\frac{\kappa(X_{\pi_\delta(r)}^{\eps,\delta})}{\eps}\thinspace dr \\
\qquad  + D\kappa(x_s)\int_{\pi_\delta(s)}^{s}\frac{f(X_r^{\eps,\delta})-f(X_{\pi_\delta(r)}^{\eps,\delta})}{\eps} \thinspace dr
 + D\kappa(x_s) \frac{f(X_{\pi_\delta(s)}^{\varepsilon, \delta})-f(x_{\pi_\delta(s)})}{\eps}(s-\pi_\delta(s))\\
 + D\kappa(x_s)\int_{\pi_\delta(s)}^{s}\sigma(X_u^{\eps,\delta})dW_u.
\end{multline*}
Defining right hand side of the above equation as $\sum_{i=1}^{4} {\sfM}_i^{\eps,\delta}(t)$, we finish the proof of the lemma.
\end{proof}

\begin{proof}[Proof of Lemma \ref{L:M1-estimate}]
We recall the definition of ${\sf M}_1^{\eps,\delta}(s)$ from equation \eqref{E:M1234}.
% let us first consider the term
%\begin{equation*}
%\left|\int_0^t e^{\int_s^t[Df(x_u)+D\kappa (x_u)]du}\left[{\sf M}_1^{\eps,\delta}-\frac{\cc}{2}D\kappa(x_s)f(x_s) \right] ds \right|.
%\end{equation*}
For any $s \ge 0,$ writing ${\sf M}_1^{\eps,\delta}(s)$ as ${\sf M}_1^{\eps,\delta}(s)+(1/\eps) D\kappa (x_{\pi_\delta(s)}) f(x_{\pi_\delta(s)})[{s-\pi_\delta(s)}]-(1/\eps) D\kappa (x_{\pi_\delta(s)}) f(x_{\pi_\delta(s)})[{s-\pi_\delta(s)}]$, we get

\begin{equation*}
\begin{aligned}
{\sf M}_1^{\eps,\delta}(s)-\frac{\cc}{2}D\kappa(x_s)f(x_s)& =(1/\eps) \{D\kappa (x_s)-D\kappa (x_{\pi_\delta(s)})\} f(x_{\pi_\delta(s)})[{s-\pi_\delta(s)}]\\
&\qquad \qquad\qquad\qquad+(1/\eps) D\kappa (x_{\pi_\delta(s)}) f(x_{\pi_\delta(s)})[{s-\pi_\delta(s)}]-\frac{\cc}{2}D\kappa(x_s)f(x_s)\\
& =(1/\eps) \{D\kappa (x_s)-D\kappa (x_{\pi_\delta(s)})\} f(x_{\pi_\delta(s)})[{s-\pi_\delta(s)}]\\
&\qquad \qquad\qquad\qquad+ (1/\eps) D\kappa (x_{\pi_\delta(s)}) f(x_{\pi_\delta(s)})[{s-\pi_\delta(s)}]-\frac{\delta}{2 \eps}D\kappa(x_s)f(x_s)\\
&\qquad \qquad\qquad\qquad \qquad \qquad \qquad \qquad \qquad \qquad+ \frac{1}{2}\left(\frac{\delta}{\eps}-\cc \right)D\kappa(x_s)f(x_s).
\end{aligned}
\end{equation*}
Now, multiplying both sides of the above equation by $e^{\int_s^t[Df(x_u)+D\kappa (x_u)]du},$ we have
% using the assumption $\sup_{u\ge 0}[Df(x_u)+D\kappa (x_u)]\le -\lambda$ for some $\lambda> 0,$
\begin{equation*}
\begin{aligned}
e^{\int_s^t[Df(x_u)+D\kappa (x_u)]du}&\left[{\sf M}_1^{\eps,\delta}(s) -\frac{\cc}{2}D\kappa(x_s)f(x_s) \right] \\
&  = (1/\eps) \{D\kappa (x_s)-D\kappa (x_{\pi_\delta(s)})\} f(x_{\pi_\delta(s)})[{s-\pi_\delta(s)}]e^{\int_s^t[Df(x_u)+D\kappa (x_u)]du}\\
& \quad + \left[(1/\eps) D\kappa (x_{\pi_\delta(s)}) f(x_{\pi_\delta(s)})[{s-\pi_\delta(s)}]-\frac{\delta}{2 \eps}D\kappa(x_s)f(x_s)\right]e^{\int_s^t[Df(x_u)+D\kappa (x_u)]du}\\
&\qquad \qquad    \qquad \qquad \qquad  \qquad \quad+ \frac{1}{2}\left(\frac{\delta}{\eps}-\cc \right)D\kappa(x_s)f(x_s)e^{\int_s^t[Df(x_u)+D\kappa (x_u)]du}.
\end{aligned}
\end{equation*}
Next, for any integer $p\ge 1,$ we use Lemma \ref{L:Sec-der-Kappa} to handle the term $D\kappa (x_s)-D\kappa (x_{\pi_\delta(s)})$, Assumption \ref{A:Poly-Int} and the hypothesis that $f$ is of polynomial growth and $\sup_{t\ge 0}|x_t| < \infty$ (Lemma \ref{L:Poly-p-Moment}) to get
\begin{equation*}
\begin{aligned}
 & \left| \int_0^t e^{\int_s^t[Df(x_u)+D\kappa (x_u)]du}\left[{\sf M}_1^{\eps,\delta}(s) -\frac{\cc}{2}D\kappa(x_s)f(x_s) \right] ds \right|^p \\
&\le \frac{\delta^{2p}}{\eps^p}C\left(\int_0^t e^{\int_s^t[Df(x_u)+D\kappa (x_u)]du}\thinspace ds\right)^p + \frac{1}{2^p}C\left|\frac{\delta}{\eps}-\cc \right|^p\left(\int_0^t e^{\int_s^t[Df(x_u)+D\kappa (x_u)]du}\thinspace ds\right)^p\\
&\qquad \qquad \quad + C\left| \int_0^t \left[(1/\eps) D\kappa (x_{\pi_\delta(s)}) f(x_{\pi_\delta(s)})[{s-\pi_\delta(s)}]-\frac{\delta}{2 \eps}D\kappa(x_s)f(x_s)\right]e^{\int_s^t[Df(x_u)+D\kappa (x_u)]du}\thinspace ds\right|^p\\
& \le C \left(\frac{\delta^{2p}}{\eps^p}+\left|\frac{\delta}{\eps}-\cc \right|^p \right)\\
&\qquad \qquad \quad + C\left| \int_0^t \left[(1/\eps) D\kappa (x_{\pi_\delta(s)}) f(x_{\pi_\delta(s)})[{s-\pi_\delta(s)}]-\frac{\delta}{2 \eps}D\kappa(x_s)f(x_s)\right]e^{\int_s^t[Df(x_u)+D\kappa (x_u)]du}\thinspace ds\right|^p.
\end{aligned}
\end{equation*}
The proof can be concluded by employing Lemma \ref{L:M-1-S-1} for the last term on the right side.
\end{proof}

\begin{proof}[Proof of Lemma \ref{L:M2-estimate}]
Recalling the definition of the process ${\sf M}_2^{\eps,\delta}(s)$ from equation \eqref{E:M1234} and using a simple algebra, we obtain
\begin{equation*}
\begin{aligned}
{\sf M}_2^{\eps,\delta}(s)-\frac{\cc}{2}D\kappa(x_s)\kappa(x_s)&  =(1/\eps) \{D\kappa (x_s)-D\kappa (x_{\pi_\delta(s)})\} \kappa(x_{\pi_\delta(s)})[{s-\pi_\delta(s)}]\\
& \qquad+ (1/\eps) \{D\kappa (x_s)-D\kappa (x_{\pi_\delta(s)})\} \left[\kappa(X_{\pi_\delta(s)}^{\eps,\delta})-\kappa (x_{\pi_\delta(s)})\right][{s-\pi_\delta(s)}]\\
& \qquad \qquad + (1/\eps) D\kappa (x_{\pi_\delta(s)}) \left[\kappa(X_{\pi_\delta(s)}^{\eps,\delta})-\kappa (x_{\pi_\delta(s)})\right][{s-\pi_\delta(s)}]\\
& \qquad \qquad \qquad+ (1/\eps) D\kappa (x_{\pi_\delta(s)}) \kappa(x_{\pi_\delta(s)})[{s-\pi_\delta(s)}]-\frac{\delta}{2 \eps}D\kappa(x_s)\kappa(x_s)\\
& \qquad \qquad \qquad \qquad \qquad \qquad \qquad \qquad \qquad \qquad + \frac{1}{2}\left(\frac{\delta}{\eps}-\cc \right)D\kappa(x_s)\kappa(x_s).
\end{aligned}
\end{equation*}
Now, multiplying both sides of the above equation by $e^{\int_s^t[Df(x_u)+D\kappa (x_u)]du},$ we have
%Now, using above equation and the assumption $\sup_{u\ge 0}[Df(x_u)+D\kappa (x_u)]\le -\lambda$ for some $\lambda> 0,$ we have
\begin{equation*}
\begin{aligned}
e^{\int_s^t[Df(x_u)+D\kappa (x_u)]du}&\left[ {\sf M}_2^{\eps,\delta}(s)-\frac{\cc}{2}D\kappa(x_s)\kappa(x_s) \right]\\
&= (1/\eps) \{D\kappa (x_s)-D\kappa (x_{\pi_\delta(s)})\} \kappa(x_{\pi_\delta(s)})[{s-\pi_\delta(s)}]e^{\int_s^t[Df(x_u)+D\kappa (x_u)]du}\\
&+ \frac{1}{\eps} \{D\kappa (x_s)-D\kappa (x_{\pi_\delta(s)})\} \left[\kappa(X_{\pi_\delta(s)}^{\eps,\delta})-\kappa (x_{\pi_\delta(s)})\right][{s-\pi_\delta(s)}]e^{\int_s^t[Df(x_u)+D\kappa (x_u)]du}\\
&+ (1/\eps) D\kappa (x_{\pi_\delta(s)}) \left[\kappa(X_{\pi_\delta(s)}^{\eps,\delta})-\kappa (x_{\pi_\delta(s)})\right][{s-\pi_\delta(s)}]e^{\int_s^t[Df(x_u)+D\kappa (x_u)]du}\\
&+ \left[(1/\eps) D\kappa (x_{\pi_\delta(s)}) \kappa(x_{\pi_\delta(s)})[{s-\pi_\delta(s)}]-\frac{\delta}{2 \eps}D\kappa(x_s)\kappa(x_s)\right]e^{\int_s^t[Df(x_u)+D\kappa (x_u)]du}\\
&+ \frac{1}{2}\left(\frac{\delta}{\eps}-\cc \right)D\kappa(x_s)\kappa(x_s)e^{\int_s^t[Df(x_u)+D\kappa (x_u)]du}.
\end{aligned}
\end{equation*}
Using the Lipschitz continuity and boundedness of the function $\kappa$ and its derivatives, the fact $s-\pi_\delta(s)<\delta,$ for any $\delta>0$ and $\sup_{t\ge 0}|x_t| < \infty$ (Lemma \ref{L:Poly-p-Moment}), we have
\begin{equation*}
\begin{aligned}
& \left|\int_0^t e^{\int_s^t[Df(x_u)+D\kappa (x_u)]du}\left[{\sf M}_2^{\eps,\delta}(s)-\frac{\cc}{2}D\kappa(x_s)\kappa(x_s) \right]ds \right|^p \\
& \qquad \qquad \qquad \le C\frac{\delta^p}{\eps^p}\left[\int_0^t|D\kappa (x_s)-D\kappa (x_{\pi_\delta(s)})|e^{\int_s^t[Df(x_u)+D\kappa (x_u)]du}\thinspace ds \right]^p  \\
&\qquad \qquad  \qquad   + C\left|\int_0^t \left[\frac{1}{\eps} D\kappa (x_{\pi_\delta(s)}) \kappa(x_{\pi_\delta(s)})[{s-\pi_\delta(s)}]-\frac{\delta}{2 \eps}D\kappa(x_s)\kappa(x_s)\right]e^{\int_s^t[Df(x_u)+D\kappa (x_u)]du}\thinspace ds\right|^p\\
&\qquad \qquad \qquad + C\frac{\delta^p}{\eps^p}\left[ \int_0^t |D\kappa (x_s)-D\kappa (x_{\pi_\delta(s)})| \left|X_{\pi_\delta(s)}^{\eps,\delta}-x_{\pi_\delta(s)}\right|e^{\int_s^t[Df(x_u)+D\kappa (x_u)]du}\thinspace ds\right]^p
\end{aligned}
\end{equation*}
\begin{equation*}
\begin{aligned}
&\qquad \qquad \qquad + C\frac{\delta^p}{\eps^p}\left[ \int_0^t \left|X_{\pi_\delta(s)}^{\eps,\delta}-x_{\pi_\delta(s)}\right|e^{\int_s^t[Df(x_u)+D\kappa (x_u)]du}\thinspace ds\right]^p \\
&\qquad \qquad \qquad+ C\left|\frac{\delta}{\eps}-\cc \right|^p\left[\int_0^t e^{\int_s^t[Df(x_u)+D\kappa (x_u)]du}\thinspace ds\right]^p.
\end{aligned}
\end{equation*}
Taking expectation on both sides, we have
\begin{equation*}
\begin{aligned}
& \BE\left[\left|\int_0^t e^{\int_s^t[Df(x_u)+D\kappa (x_u)]du}\left[{\sf M}_2^{\eps,\delta}(s)-\frac{\cc}{2}D\kappa(x_s)\kappa(x_s) \right]ds \right|^p\right] \\
& \qquad \qquad \le C\frac{\delta^p}{\eps^p}\left[\int_0^t|D\kappa (x_s)-D\kappa (x_{\pi_\delta(s)})|e^{\int_s^t[Df(x_u)+D\kappa (x_u)]du}\thinspace ds \right]^p  \\
&\qquad \qquad \quad + C\left|\int_0^t \left[(1/\eps) D\kappa (x_{\pi_\delta(s)}) \kappa(x_{\pi_\delta(s)})[{s-\pi_\delta(s)}]-\frac{\delta}{2 \eps}D\kappa(x_s)\kappa(x_s)\right]e^{\int_s^t[Df(x_u)+D\kappa (x_u)]du}\thinspace ds\right|^p\\
&\qquad \qquad \qquad \quad + C\frac{\delta^p}{\eps^p}\BE\left[ \int_0^t |D\kappa (x_s)-D\kappa (x_{\pi_\delta(s)})| \left|X_{\pi_\delta(s)}^{\eps,\delta}-x_{\pi_\delta(s)}\right|e^{\int_s^t[Df(x_u)+D\kappa (x_u)]du}\thinspace ds\right]^p\\
&\qquad \qquad \qquad \qquad \qquad+ C\frac{\delta^p}{\eps^p}\BE\left[\int_0^t \left|X_{\pi_\delta(s)}^{\eps,\delta}-x_{\pi_\delta(s)}\right|e^{\int_s^t[Df(x_u)+D\kappa (x_u)]du}\thinspace ds\right]^p\\
& \qquad \qquad \qquad \qquad \qquad \qquad \qquad \qquad+ C\left|\frac{\delta}{\eps}-\cc \right|^p\left[\int_0^t e^{\int_s^t[Df(x_u)+D\kappa (x_u)]du}\thinspace ds\right]^p.
\end{aligned}
\end{equation*}
Employing Lemma \ref{L:Sec-der-Kappa} to handle the term $\sup_{s \ge 0}|D\kappa (x_u)-D\kappa (x_{\pi_\delta(u)})|,$ we obtain
\begin{equation*}
\begin{aligned}
& \BE\left[\left|\int_0^t e^{\int_s^t[Df(x_u)+D\kappa (x_u)]du}\left[{\sf M}_2^{\eps,\delta}(s)-\frac{\cc}{2}D\kappa(x_s)\kappa(x_s) \right]ds \right|^p\right] \\
& \qquad \qquad  \le C\frac{\delta^{2p}}{\eps^p}\left[\int_0^t e^{\int_s^t[Df(x_u)+D\kappa (x_u)]du}\thinspace ds \right]^p + C\left|\frac{\delta}{\eps}-\cc \right|^p\left[\int_0^t e^{\int_s^t[Df(x_u)+D\kappa (x_u)]du}\thinspace ds\right]^p  \\
&\qquad \qquad \quad   + C\left|\int_0^t \left[(1/\eps) D\kappa (x_{\pi_\delta(s)}) \kappa(x_{\pi_\delta(s)})[{s-\pi_\delta(s)}]-\frac{\delta}{2 \eps}D\kappa(x_s)\kappa(x_s)\right]e^{\int_s^t[Df(x_u)+D\kappa (x_u)]du}\thinspace ds\right|^p\\
&\qquad \qquad \qquad \qquad \qquad \qquad \qquad \quad+ C\left(\frac{\delta^{2p}+\delta^p}{\eps^p}\right)\BE\left[ \int_0^t  \left|X_{\pi_\delta(s)}^{\eps,\delta}-x_{\pi_\delta(s)}\right|e^{\int_s^t[Df(x_u)+D\kappa (x_u)]du}\thinspace ds\right]^p.
\end{aligned}
\end{equation*}
Finally, we use Assumption \ref{A:Poly-Int} to handle the term $\int_0^t e^{\int_s^t[Df(x_u)+D\kappa (x_u)]du}\thinspace ds$ and  Lemmas \ref{L:M-2-S-2} and \ref{L:M-2-Intermediate} for the last two terms in the above equation to complete the proof.
\end{proof}

\begin{proof}[Proof of Lemma \ref{L:M3-estimate}]
Recalling the definition of the process ${\sf M}_3^{\eps,\delta}(s)$ from  equation \eqref{E:M1234}, we have

\begin{equation*}
\begin{aligned}
 \int_0^t e^{\int_s^t[Df(x_u)+D\kappa (x_u)]du}&{\sf M}_3^{\eps,\delta}(s) \thinspace ds\\
&  = \int_0^t\frac{1}{\eps} e^{\int_s^t[Df(x_u)+D\kappa (x_u)]du}D\kappa (x_s)\left(\int_{\pi_\delta(s)}^{s}\{{ f(X_r^{\eps,\delta})- f(X_{\pi_\delta(r)}^{\eps,\delta})}\}\thinspace dr \right)ds  \\
 & \qquad+ \int_0^t\frac{1}{\eps}e^{\int_s^t[Df(x_u)+D\kappa (x_u)]du} D\kappa (x_s) \left[{ f(X_{\pi_\delta(s)}^{\varepsilon, \delta})-f(x_{\pi_\delta(s)})}\right][s-\pi_\delta(s)]\thinspace ds.
\end{aligned}
\end{equation*}
%Now, using above equation and the assumption $\sup_{u\ge 0}[Df(x_u)+D\kappa (x_u)]\le -\lambda$ for some $\lambda> 0,$ we have
%\begin{equation*}
%\begin{aligned}
%\int_0^t e^{\int_s^t[Df(x_u)+D\kappa (x_u)]du}{\sf M}_3^{\eps,\delta}(s) \thinspace ds
%& \le (1/\eps)\int_0^t e^{-\lambda(t-s)}D\kappa (x_s)\left(\int_{\pi_\delta(s)}^{s}\{{ f(X_r^{\eps,\delta})- f(X_{\pi_\delta(r)}^{\eps,\delta})}\}\thinspace dr \right)ds\\
%&\qquad   + (1/\eps)\int_0^te^{-\lambda(t-s)}D\kappa (x_s)\left[{ f(X_{\pi_\delta(s)}^{\varepsilon, \delta})-f(x_{\pi_\delta(s)})}\right][s-\pi_\delta(s)]\thinspace ds.
%\end{aligned}
%\end{equation*}
Now, for any $p\ge 1,$ using the boundedness of the first-order derivatives of $\kappa$, the fact $s-\pi_\delta(s)< \delta,$ for any $\delta>0,$ and a simple algebra, we have after taking expectation
\begin{equation*}
\begin{aligned}
\BE\left|\int_0^t e^{\int_s^t[Df(x_u)+D\kappa (x_u)]du}{\sf M}_3^{\eps,\delta}(s) \thinspace ds\right|^p &\le \frac{C}{\eps^p}\BE\left[\int_0^t e^{\int_s^t[Df(x_u)+D\kappa (x_u)]du}\int_{\pi_\delta(s)}^{s}\left|{ f(X_r^{\eps,\delta})- f(X_{\pi_\delta(r)}^{\eps,\delta})}\right| dr \thinspace ds \right]^p\\
& \quad \quad \quad + C\frac{\delta^p}{\eps^p}\BE\left[\int_0^t e^{\int_s^t[Df(x_u)+D\kappa (x_u)]du}\left|{ f(X_{\pi_\delta(s)}^{\varepsilon, \delta})-f(x_{\pi_\delta(s)})}\right| ds \right]^p\\
& \le  \frac{C}{\eps^p}\BE\left[\int_0^t e^{\int_s^t[Df(x_u)+D\kappa (x_u)]du}\int_{\pi_\delta(s)}^{s} |f(X_r^{\eps,\delta})- f(x_r)|dr\thinspace ds \right]^p\\
& \thinspace  + \frac{C}{\eps^p}\BE\left[\int_0^t e^{\int_s^t[Df(x_u)+D\kappa (x_u)]du}\int_{\pi_\delta(s)}^{s} |f(X_{\pi_\delta(r)}^{\eps,\delta})-f(x_{\pi_\delta(r)})|dr\thinspace ds \right]^p\\
& \quad    +  \frac{C}{\eps^p}\BE\left[\int_0^t e^{\int_s^t[Df(x_u)+D\kappa (x_u)]du}\int_{\pi_\delta(s)}^{s}\left|f(x_r)- f(x_{\pi_\delta(r)})\right|dr \thinspace ds \right]^p\\
& \qquad    +  C\frac{\delta^p}{\eps^p}\BE\left[\int_0^t e^{\int_s^t[Df(x_u)+D\kappa (x_u)]du}\left|{ f(X_{\pi_\delta(s)}^{\varepsilon, \delta})-f(x_{\pi_\delta(s)})}\right| ds \right]^p.
\end{aligned}
\end{equation*}

The next step is to use the local Lipschitz continuity of the mapping $f$ by Assumption \ref{A:Poly-Lip} together with H\"{o}lder's inequality, Theorem \ref{T:LLN-UIT}
and Lemmas \ref{L:Poly-p-Moment} and \ref{L:Sampling-Difference}. We omit the details for brevity.

\end{proof}

\begin{proof}[Proof of Lemma \ref{L:M4-estimate}]
Let us define for notational convenience
\begin{equation*}
J^{\eps,\delta}(t) \triangleq \left|\int_0^t e^{\int_s^t[Df(x_u)+D\kappa (x_u)]du}D\kappa (x_s)\int_{\pi_\delta(s)}^s \sigma(X_u^{\eps,\delta})\thinspace dW_u\thinspace ds \right|^p.
\end{equation*}

Using the basic integral inequality for Lebesgue integrals $|\int \cdot|\le \int|\cdot|,$ we have
\begin{equation*}
\begin{aligned}
J^{\eps,\delta}(t) &\le C \left(\int_0^t e^{\int_s^t[Df(x_u)+D\kappa (x_u)]du}\left|\int_{\pi_\delta(s)}^s \sigma(X_u^{\eps,\delta})\thinspace dW_u\right| ds \right)^p\\
& \quad  = C  \left[\int_{[0,t]^p} \left(\prod_{i=1}^p e^{\int_{s_i}^t[Df(x_u)+D\kappa (x_u)]du}\right) \times\left(\prod_{i=1}^p\left|\int_{[{\pi_\delta(s_i)},s_i]} \sigma(X_{u_i}^{\eps,\delta})\thinspace dW_{u_i}\right| \right)ds_1\cdots ds_p \right]\\
& \qquad \le C \left[\int_{[0,t]^p} \left(\prod_{i=1}^p e^{\int_{s_i}^t[Df(x_u)+D\kappa (x_u)]du}\right) \times\left(\sum_{i=1}^p\left|\int_{[{\pi_\delta(s_i)},s_i]} \sigma(X_{u_i}^{\eps,\delta})\thinspace dW_{u_i}\right|^p \right)ds_1\cdots ds_p \right].
\end{aligned}
\end{equation*}
%Using first the assumption $\sup_{u\ge 0}[Df(x_u)+D\kappa (x_u)]\le -\lambda$ for some $\lambda> 0,$ then
Taking expectation followed by martingale moment inequalities \cite[Proposition 3.26]{KS91}, we obtain
\begin{multline*}
\BE\left[J^{\eps,\delta}(t)\right] \le C  \left[\int_{[0,t]^p}\left( \prod_{i=1}^p e^{\int_{s_i}^t[Df(x_u)+D\kappa (x_u)]du}\right)\left(\sum_{i=1}^p\BE \left\{\int_{[{\pi_\delta(s_i)},s_i]} \sigma^2(X_{u_i}^{\eps,\delta})\thinspace d{u_i}\right\}^{\frac{p}{2}} \right)ds_1\cdots ds_p \right].
\end{multline*}
Employing the boundedness of $\sigma$, Assumption \ref{A:Poly-growth}, the fact $s-\pi_\delta(s)\le \delta,$ and a simple algebra, we obtain the desired estimate.

%The proof of the lemma follows by the similar calculations to the proof of Lemma \ref{L:LLN-Second-moment-bound-component-2}.
\end{proof}

\section{Proof of more supporting results: Lemmas \ref{L:Sampling-Difference} to \ref{L:M-2-Intermediate} and Propositions \ref{P:CLT-R-12},\ref{P:CLT-R-3}}\label{S:Error-Prop-proof}
\subsection{Proof of Lemmas \ref{L:Sampling-Difference} through \ref{L:M-2-Intermediate}}

%Before proving Lemma \ref{L:M1-estimate}, we state two helpful Lemmas \ref{L:Sec-der-Kappa} and \ref{L:M-1-S-1} below; the proof of these lemmas are presented in Section \ref{S:Error-Prop-proof} to keep the presentation clean.
\begin{lemma}\label{L:Sampling-Difference}
Let $x_{t}$ be the solution of \eqref{E:det-sys}. Then, for any integer $p \ge 1,$ there exists a positive constant $C_{\ref{L:Sampling-Difference}}$ such that
$$\sup_{t \ge 0}|x_t - x_{\pi_\delta(t)}|^p \le \delta^pC_{\ref{L:Sampling-Difference}}.$$
\end{lemma}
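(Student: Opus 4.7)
The plan is to use the ODE representation of $x_t$ directly. Since $x_t$ solves \eqref{E:det-sys}, for any $t\ge 0$ I can write
\begin{equation*}
x_t - x_{\pi_\delta(t)} = \int_{\pi_\delta(t)}^t \bigl[f(x_s) + \kappa(x_s)\bigr] \thinspace ds,
\end{equation*}
so by the triangle inequality $|x_t - x_{\pi_\delta(t)}| \le \int_{\pi_\delta(t)}^t \bigl(|f(x_s)| + |\kappa(x_s)|\bigr) ds$.

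Next I would bound each integrand uniformly in time. From Lemma \ref{L:Poly-p-Moment}, $\sup_{s\ge 0}|x_s|\le C_{\ref{L:Poly-p-Moment}}$. Since Assumption \ref{A:Poly-Lip} implies that $f$ has at most polynomial growth (via $|f(x)| \le |f(0)| + [\xi |x|^q + \mu]|x|$), this yields a time-independent bound $\sup_{s\ge 0}|f(x_s)| \le C$. The boundedness of $\kappa$ follows directly from Assumption \ref{A:Poly-growth}. Combining these,
\begin{equation*}
|x_t - x_{\pi_\delta(t)}| \le C\bigl(t - \pi_\delta(t)\bigr) \le C \delta,
\end{equation*}
using the fact that $t - \pi_\delta(t) < \delta$.

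Finally, raising both sides to the $p$-th power and taking the supremum over $t\ge 0$ gives $\sup_{t\ge 0}|x_t - x_{\pi_\delta(t)}|^p \le C_{\ref{L:Sampling-Difference}}\delta^p$, with $C_{\ref{L:Sampling-Difference}}$ independent of $t$ and $\delta$. There is no real obstacle here; the only point to be careful about is ensuring the constant absorbing $|f(x_s)| + |\kappa(x_s)|$ is genuinely time-independent, which is immediate once one invokes the uniform bound on $x_s$ from Lemma \ref{L:Poly-p-Moment}.
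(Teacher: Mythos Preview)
Your proof is correct and follows essentially the same approach as the paper: both use the integral representation of $x_t$, bound $|f(x_s)|$ via polynomial growth together with the uniform bound $\sup_{s\ge 0}|x_s|<\infty$ from Lemma~\ref{L:Poly-p-Moment}, bound $|\kappa|$ by Assumption~\ref{A:Poly-growth}, and use $t-\pi_\delta(t)<\delta$.
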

\begin{proof}
%[Proof of Lemma \ref{L:Sampling-Difference}]
Recalling the integral representation of $x_t$ from equation \eqref{E:det-sys} and using the polynomial growth of the mappings $f$ and boundedness of $\kappa$, we have for some $q<\infty$
\begin{equation*}
|x_t - x_{\pi_\delta(t)}| \le C \int_{\pi_\delta(t)}^{t} \left(1+ \sup_{ u \ge 0}|x_u|^{q} \right)ds.
\end{equation*}
Using the fact $t-\pi_\delta(t)<\delta$ for any $\delta>0$ and  $\sup_{ u \ge 0}|x_u|< \infty$ (Lemma \ref{L:Poly-p-Moment}), we get the required estimate.
\end{proof}

\begin{lemma}\label{L:Sec-der-Kappa}
Let $x_{t}$ be the solution of equation \eqref{E:det-sys}. Then, for any $t\ge 0,$ there exists a positive constant $C_{\ref{L:Sec-der-Kappa}}$ such that
\begin{equation*}
\sup_{t \ge 0}\left|D\kappa (x_t)-D\kappa (x_{\pi_\delta(t)})\right|\le \delta C_{\ref{L:Sec-der-Kappa}}.
\end{equation*}
\end{lemma}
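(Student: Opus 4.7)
The plan is to reduce this to a one-step Taylor expansion of $D\kappa$ followed by an application of the already established sampling bound from Lemma \ref{L:Sampling-Difference}. Specifically, I would write
\begin{equation*}
D\kappa(x_t) - D\kappa(x_{\pi_\delta(t)}) = D^2\kappa(\eta_t)\bigl(x_t - x_{\pi_\delta(t)}\bigr)
\end{equation*}
for some intermediate point $\eta_t$ on the line segment joining $x_t$ and $x_{\pi_\delta(t)}$, which is justified by the mean value theorem since $\kappa$ is twice differentiable (Assumption \ref{A:Poly-Int}).

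The two inputs then do all the work. First, Assumption \ref{A:Poly-Int} tells us that the second derivative of $\kappa$ is uniformly bounded, so $|D^2\kappa(\eta_t)| \le \|D^2\kappa\|_\infty$ regardless of where $\eta_t$ lies. Second, Lemma \ref{L:Sampling-Difference} (with $p=1$) gives $\sup_{t \ge 0}|x_t - x_{\pi_\delta(t)}| \le C_{\ref{L:Sampling-Difference}}\delta$, crucially with a time-independent constant, since it was proved using the uniform-in-time bound on $x_t$ from Lemma \ref{L:Poly-p-Moment} and the polynomial growth of $f$ together with boundedness of $\kappa$. Combining the two,
\begin{equation*}
\sup_{t \ge 0}\bigl|D\kappa(x_t) - D\kappa(x_{\pi_\delta(t)})\bigr| \le \|D^2\kappa\|_\infty \sup_{t \ge 0}|x_t - x_{\pi_\delta(t)}| \le \|D^2\kappa\|_\infty \, C_{\ref{L:Sampling-Difference}}\, \delta,
\end{equation*}
and setting $C_{\ref{L:Sec-der-Kappa}} \triangleq \|D^2\kappa\|_\infty C_{\ref{L:Sampling-Difference}}$ finishes the proof.

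There is no real obstacle here; this is a routine auxiliary lemma whose sole purpose is to convert a sampling gap of length $\delta$ in time into a bound of order $\delta$ for the oscillation of $D\kappa$ along the deterministic trajectory $x_t$. The only subtlety worth flagging is the time-independence of the constant, which is inherited entirely from Lemma \ref{L:Sampling-Difference} and hence from the uniform bound $\sup_{t\ge 0}|x_t|<\infty$ in Lemma \ref{L:Poly-p-Moment}; if $D^2\kappa$ were only locally bounded, one would still be fine because $x_t$ stays in a compact set.
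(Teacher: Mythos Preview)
Your proof is correct and follows essentially the same approach as the paper: apply Taylor's theorem (mean value form) to $D\kappa$, invoke the uniform bound on $D^2\kappa$, and then use Lemma~\ref{L:Sampling-Difference} with $p=1$ to control $|x_t - x_{\pi_\delta(t)}|$. The paper's proof is identical in structure, differing only in that it attributes the boundedness of $D^2\kappa$ to Assumption~\ref{A:Poly-growth} rather than Assumption~\ref{A:Poly-Int} (your citation is actually the accurate one).
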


\begin{proof}
%[Proof of Lemma \ref{L:Sec-der-Kappa}]
Using Taylor's theorem, we have
\begin{equation*}
\left|D\kappa (x_t)-D\kappa (x_{\pi_\delta(t)})\right|\le |D^2\kappa(z)||x_t-x_{\pi_\delta(t)}|,
\end{equation*}
where $z \in \BR$ is a point lying on the line segment joining $x_{\pi_\delta(t)}$ and $x_{t}.$ Now, boundedness of the second-order derivatives of $\kappa$ (Assumption \ref{A:Poly-growth}) and Lemma \ref{L:Sampling-Difference} for the term $|x_s-x_{\pi_\delta(s)}|^p$ complete the proof of the lemma.
%\color{blue} Probably mention the lemma for the term $|x_s-x_{\pi_\delta(s)}|^p.$ \color{black}
\end{proof}
We now start the preparation of proving Lemma \ref{L:M-1-S-1} through the helpful results: Lemmas \ref{L:R-1-Rem}, \ref{L:R-2-3-Rem}, and \ref{L:M-1-R-term} mentioned below. Lemma \ref{L:M-1-S-1} was used in the proof of Lemma \ref{L:M1-estimate}.

\begin{lemma}\label{L:R-1-Rem}
Let $x_{t}$ be the solution of equation \eqref{E:det-sys} and $$\mathscr{R}_1^\delta(t)\triangleq \sum_{i=0}^{\lfloor \frac{t}{\delta}\rfloor-1}\int_{i\delta}^{(i+1)\delta}D\kappa(x_{i\delta})f(x_{i\delta})\int_{s}^{(i+1)\delta}\frac{d}{du}e^{\int_u^t[Df(x_r)+D\kappa(x_r)]\thinspace dr}\thinspace du \thinspace ds.$$ Then, for any integer $p\ge 1,$ there exists a positive constant $C_{\ref{L:R-1-Rem}}$ such that
$$\sup_{t \ge 0}|\mathscr{R}_1^\delta(t)|^p \le \delta^p  C_{\ref{L:R-1-Rem}}.$$
\end{lemma}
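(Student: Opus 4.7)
The plan is to rewrite the inner integral using the fundamental theorem of calculus and then reduce everything to the uniform-in-time integrability supplied by Assumption \ref{A:Poly-Int}. Writing $\Phi(u,t)\triangleq e^{\int_u^t[Df(x_r)+D\kappa(x_r)]\, dr}$, one has $\frac{d}{du}\Phi(u,t)=-[Df(x_u)+D\kappa(x_u)]\Phi(u,t)$, hence $\int_{s}^{(i+1)\delta}\frac{d}{du}\Phi(u,t)\,du=-\int_{s}^{(i+1)\delta}[Df(x_u)+D\kappa(x_u)]\Phi(u,t)\,du$. This replaces the abstract derivative in the definition of $\mathscr{R}_1^\delta(t)$ by a quantity that is pointwise controllable.

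Next, I would collect uniform pointwise bounds on the prefactors. Since $\kappa$ is globally Lipschitz by Assumption \ref{A:Poly-Lip}, $|D\kappa|\le L_\kappa$ everywhere; the polynomial-growth Lipschitz condition on $f$ yields $|Df(x)|\le 2\xi|x|^q+\mu$; and by Lemma \ref{L:Poly-p-Moment}, $\sup_{t\ge 0}|x_t|\le C_{\ref{L:Poly-p-Moment}}$. Together these imply $|D\kappa(x_{i\delta})f(x_{i\delta})|\le C$ and $|Df(x_u)+D\kappa(x_u)|\le C$ with a constant $C$ that depends on none of $t$, $\delta$, or $i$.

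Feeding these bounds into the definition, applying the triangle inequality, and swapping the order of integration on each sampling cell $[i\delta,(i+1)\delta]$ gives
\begin{equation*}
|\mathscr{R}_1^\delta(t)|\le C\sum_{i=0}^{\lfloor t/\delta\rfloor-1}\int_{i\delta}^{(i+1)\delta}\!\!\int_{s}^{(i+1)\delta}\Phi(u,t)\,du\,ds=C\sum_{i=0}^{\lfloor t/\delta\rfloor-1}\int_{i\delta}^{(i+1)\delta}(u-i\delta)\Phi(u,t)\,du\le C\delta\int_0^{\pi_\delta(t)}\Phi(u,t)\,du.
\end{equation*}
Finally, Assumption \ref{A:Poly-Int} with $m=1$ bounds $\int_0^{\pi_\delta(t)}\Phi(u,t)\,du\le\int_0^{t}\Phi(u,t)\,du\le C$ uniformly in $t\ge 0$, so $|\mathscr{R}_1^\delta(t)|\le C\delta$, and raising to the $p$-th power yields the stated bound.

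No genuine obstacle arises in this proof; it is essentially bookkeeping. The only non-trivial input is the pairing of Lemma \ref{L:Poly-p-Moment} (which turns the polynomial growth of $f$ and $Df$ into uniform bounds along the trajectory $x_t$) with Assumption \ref{A:Poly-Int} (which supplies the time-independent integral estimate on $\Phi(\cdot,t)$).
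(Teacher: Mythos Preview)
Your proof is correct and follows the same overall strategy as the paper: rewrite the derivative of $\Phi(u,t)$ explicitly, use Lemma \ref{L:Poly-p-Moment} to turn the polynomial growth of $f$ and $Df$ into uniform-in-time constants along the trajectory $x_t$, and finish with Assumption \ref{A:Poly-Int}.

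The one place you differ is in handling the double integral $\sum_i\int_{i\delta}^{(i+1)\delta}\int_s^{(i+1)\delta}\Phi(u,t)\,du\,ds$. The paper factorises $\Phi(u,t)=\Phi(s,t)\,e^{-\int_s^u[Df(x_r)+D\kappa(x_r)]\,dr}$ and then argues separately that the second factor is bounded by $\max\{1,e^{C\delta}\}$ on each cell, which extracts the factor $\delta$ from the inner integral. Your Fubini step, $\int_{i\delta}^{(i+1)\delta}\int_s^{(i+1)\delta}\Phi(u,t)\,du\,ds=\int_{i\delta}^{(i+1)\delta}(u-i\delta)\Phi(u,t)\,du$, is cleaner: it avoids the case analysis on the sign of $Df+D\kappa$ and delivers the $\delta$ directly via $u-i\delta\le\delta$. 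Both routes land on the same estimate $C\delta\int_0^t\Phi(u,t)\,du$, so the difference is purely cosmetic, but yours is the tidier bookkeeping.
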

\begin{proof}
Using the boundedness of the derivatives of $\kappa$, polynomial growth of the function $f$ (Assumptions \ref{A:Poly-Lip}, \ref{A:Poly-growth}), and $\frac{d}{du}e^{\int_u^t[Df(x_r)+D\kappa(x_r)]\thinspace dr}=-[Df(x_u)+D\kappa(x_u)]e^{\int_u^t[Df(x_r)+D\kappa(x_r)]\thinspace dr},$ we have

\begin{equation}\label{E:R-1-delta}
\begin{aligned}
|\mathscr{R}_1^\delta(t)|^p &\le C\left(\sup_{t\ge 0}|x_t|^q +1\right)^p  \left[\sum_{i=0}^{\lfloor \frac{t}{\delta}\rfloor-1}\int_{i\delta}^{(i+1)\delta}\int_{s}^{(i+1)\delta}\left|Df(x_u)+D\kappa(x_u)\right|e^{\int_u^t[Df(x_r)+D\kappa(x_r)]\thinspace dr}\thinspace du \thinspace ds\right]^p\\
& \qquad \qquad \qquad \qquad   \le C\left(\sup_{t\ge 0}|x_t|^{q}+1\right)^{2p} \left[\sum_{i=0}^{\lfloor \frac{t}{\delta}\rfloor-1}\int_{i\delta}^{(i+1)\delta}\int_{s}^{(i+1)\delta}e^{\int_u^t[Df(x_r)+D\kappa(x_r)]\thinspace dr}\thinspace du \thinspace ds\right]^p.
\end{aligned}
\end{equation}

We now focus to get an estimate on the term $\sum_{i=0}^{\lfloor \frac{t}{\delta}\rfloor-1}\int_{i\delta}^{(i+1)\delta}\int_{s}^{(i+1)\delta}e^{\int_u^t[Df(x_r)+D\kappa(x_r)]\thinspace dr}\thinspace du \thinspace ds$ in the above equation. To do this, we note $i\delta \le s \le u  \le (i+1)\delta \le t $, for any $i\in \{0,\cdots, \lfloor \frac{t}{\delta}\rfloor-1\}$ and $e^{\int_u^t[Df(x_r)+D\kappa(x_r)]\thinspace dr}= e^{\int_s^t[Df(x_r)+D\kappa(x_r)]\thinspace dr}e^{-\int_s^u[Df(x_r)+D\kappa(x_r)]\thinspace dr}.$ Hence,
\begin{equation}\label{E:R_1-CLT-Intermediate}
\begin{aligned}
\sum_{i=0}^{\lfloor \frac{t}{\delta}\rfloor-1}\int_{i\delta}^{(i+1)\delta}\int_{s}^{(i+1)\delta} & e^{\int_u^t[Df(x_r)+D\kappa(x_r)]\thinspace dr}\thinspace du \thinspace ds \\
 &= \sum_{i=0}^{\lfloor \frac{t}{\delta}\rfloor-1}\int_{i\delta}^{(i+1)\delta}e^{\int_s^t[Df(x_r)+D\kappa(x_r)]\thinspace dr}\int_{s}^{(i+1)\delta}e^{-\int_s^u[Df(x_r)+D\kappa(x_r)]\thinspace dr}\thinspace du \thinspace ds.
\end{aligned}
\end{equation}
Now, if for any $r \in [s,(i+1)\delta],$ $[Df(x_r)+D\kappa(x_r)] \ge 0,$ then we have $e^{-\int_s^u[Df(x_r)+D\kappa(x_r)]\thinspace dr}\le 1;$ again if for any $r \in [s,(i+1)\delta],$ $[Df(x_r)+D\kappa(x_r)] \le 0,$ we have $e^{-\int_s^u[Df(x_r)+D\kappa(x_r)]\thinspace dr}\le C.$ Indeed, for the latter, $e^{-\int_s^u[Df(x_r)+D\kappa(x_r)]\thinspace dr} \le e^{\int_s^u C dr} \le e^{C\delta}$ as $0 \le u-s \le \delta.$ Hence, $e^{-\int_s^u[Df(x_r)+D\kappa(x_r)]\thinspace dr}\le \max\{1, C\}.$ Returning to equation \eqref{E:R_1-CLT-Intermediate} and using $(i+1)\delta-s \le \delta$, we obtain
\begin{equation}\label{E:R_1-CLT-3}
\begin{aligned}
\sum_{i=0}^{\lfloor \frac{t}{\delta}\rfloor-1}\int_{i\delta}^{(i+1)\delta}\int_{s}^{(i+1)\delta}  e^{\int_u^t[Df(x_r)+D\kappa(x_r)]\thinspace dr}\thinspace du \thinspace ds
 & \le C \delta \sum_{i=0}^{\lfloor \frac{t}{\delta}\rfloor-1}\int_{i\delta}^{(i+1)\delta}e^{\int_s^t[Df(x_r)+D\kappa(x_r)]\thinspace dr} \thinspace ds \\
 & \le C \delta \int_0^t e^{\int_s^t[Df(x_r)+D\kappa(x_r)]\thinspace dr} \thinspace ds.
\end{aligned}
\end{equation}
Finally, combining the equations \eqref{E:R-1-delta}, \eqref{E:R_1-CLT-3} followed by the Assumption \ref{A:Poly-Int}, we obtain the required result.
%For this, using a simple algebra, we get
%\begin{equation*}
%\begin{aligned}
%\sum_{i=0}^{\lfloor \frac{t}{\delta}\rfloor-1}\int_{i\delta}^{(i+1)\delta}\int_{s}^{(i+1)\delta}e^{-\lambda(t-u)}\thinspace du \thinspace ds &=e^{-\lambda t}\sum_{i=0}^{\lfloor \frac{t}{\delta}\rfloor-1}\int_{i\delta}^{(i+1)\delta}\int_{s}^{(i+1)\delta}e^{\lambda u}\thinspace du \thinspace ds\\
%&\le e^{-\lambda t}\sum_{i=0}^{\lfloor \frac{t}{\delta}\rfloor-1}\int_{i\delta}^{(i+1)\delta} e^{\lambda (i+1)\delta}\left\{(i+1)\delta-s \right\}ds\\
%& = e^{-\lambda t}\sum_{i=0}^{\lfloor \frac{t}{\delta}\rfloor-1}e^{\lambda (i+1)\delta}\frac{\delta^2}{2}\\
%& = \frac{\delta}{2} e^{\lambda \delta }e^{-\lambda t}\sum_{i=0}^{\lfloor \frac{t}{\delta}\rfloor-1}\int_{i\delta}^{(i+1)\delta}e^{\lambda \pi_\delta(s)}\thinspace ds\\
%&\le \delta e^{\lambda \delta } \int_0^t e^{-\lambda(t-s)}\thinspace ds=\delta C.
%\end{aligned}
%\end{equation*}
%Putting this estimate in equation \eqref{E:R-1-delta}, we obtain the desired result.
%\begin{equation*}
%J_1 \le \delta \sup_{t\ge 0}|x_t| \lambda C.
%\end{equation*}
\end{proof}

\begin{lemma}\label{L:R-2-3-Rem}
Let $x_{t}$ be the solution of equation \eqref{E:det-sys} and
\begin{equation*}
\begin{aligned}
\mathscr{R}_2^\delta(t) &\triangleq \sum_{i=0}^{\lfloor \frac{t}{\delta}\rfloor-1}\int_{i\delta}^{(i+1)\delta}\left[D\kappa(x_{i\delta})f(x_{i\delta})e^{\int_s^t[Df(x_u)+D\kappa (x_u)]du}-D\kappa(x_s)f(x_{i\delta})e^{\int_s^t[Df(x_u)+D\kappa (x_u)]du}\right]ds,\\
\mathscr{R}_3^\delta(t) &\triangleq \sum_{i=0}^{\lfloor \frac{t}{\delta}\rfloor-1}\int_{i\delta}^{(i+1)\delta}\left[ D\kappa(x_s)f(x_{i\delta})e^{\int_s^t[Df(x_u)+D\kappa (x_u)]du}
- D\kappa(x_s)f(x_s)e^{\int_s^t[Df(x_u)+D\kappa (x_u)]du}\right]ds.
\end{aligned}
\end{equation*}
Then, for any integer $p\ge 1,$ there exists a positive constant $C_{\ref{L:R-2-3-Rem}}$ such that
$$\sup_{t \ge 0}|\mathscr{R}_2^\delta(t)|^p+\sup_{t \ge 0}|\mathscr{R}_3^\delta(t)|^p \le \delta^p  C_{\ref{L:R-2-3-Rem}}.$$
\end{lemma}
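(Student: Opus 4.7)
The plan is to estimate $\mathscr{R}_2^\delta$ and $\mathscr{R}_3^\delta$ separately, observing that both have the structure of an integrand that is small (of order $\delta$) on each sampling interval, multiplied by the exponential weight. Since $s\in[i\delta,(i+1)\delta)$ means $i\delta=\pi_\delta(s)$, the sum over $i$ plus the inner integral simply reassembles as $\int_0^{\delta\lfloor t/\delta\rfloor}(\cdots)\,ds$, which is dominated by $\int_0^t (\cdots)\,ds$, so the key quantity we need to control is $\int_0^t e^{\int_s^t[Df(x_u)+D\kappa(x_u)]du}\,ds$, uniformly bounded by Assumption \ref{A:Poly-Int}.

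For $\mathscr{R}_2^\delta(t)$: in the integrand we have the factor $D\kappa(x_{i\delta})-D\kappa(x_s)=D\kappa(x_{\pi_\delta(s)})-D\kappa(x_s)$, which by Lemma \ref{L:Sec-der-Kappa} is uniformly bounded by $C\delta$. The remaining factor $f(x_{i\delta})$ is uniformly bounded in $t$ since $\sup_{t\ge 0}|x_t|<\infty$ by Lemma \ref{L:Poly-p-Moment} and $f$ has polynomial growth by Assumption \ref{A:Poly-Lip}. Combining, the integrand is bounded in absolute value by $C\delta\,e^{\int_s^t[Df(x_u)+D\kappa(x_u)]du}$, and taking the $p$-th power after integrating gives
\[
|\mathscr{R}_2^\delta(t)|^p\le C\delta^p\left(\int_0^t e^{\int_s^t[Df(x_u)+D\kappa(x_u)]du}\,ds\right)^p \le C\delta^p,
\]
uniformly in $t$ by Assumption \ref{A:Poly-Int}.

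For $\mathscr{R}_3^\delta(t)$: the crucial factor is $f(x_{i\delta})-f(x_s)=f(x_{\pi_\delta(s)})-f(x_s)$. By the local polynomial-growth Lipschitz condition in Assumption \ref{A:Poly-Lip},
\[
|f(x_{\pi_\delta(s)})-f(x_s)|\le\bigl[\xi(|x_{\pi_\delta(s)}|^q+|x_s|^q)+\mu\bigr]\,|x_s-x_{\pi_\delta(s)}|,
\]
and since $\sup_{t\ge 0}|x_t|<\infty$ (Lemma \ref{L:Poly-p-Moment}) and $|x_s-x_{\pi_\delta(s)}|\le C\delta$ (Lemma \ref{L:Sampling-Difference}), this is bounded by $C\delta$. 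Meanwhile $D\kappa(x_s)$ is uniformly bounded (Assumption \ref{A:Poly-growth}). Exactly as above, we deduce
\[
|\mathscr{R}_3^\delta(t)|^p\le C\delta^p\left(\int_0^t e^{\int_s^t[Df(x_u)+D\kappa(x_u)]du}\,ds\right)^p\le C\delta^p.
\]

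Nothing here is particularly delicate; the only mild subtlety is making sure not to lose a factor of $\delta$ when passing from the length of a single sub-interval to the whole integral. The sub-interval length $\delta$ has already been spent in bounding the increment factor ($D\kappa$ or $f$), so we retain the full integral weight, which is itself $O(1)$ by Assumption \ref{A:Poly-Int}. Combining the two bounds yields the claim with $C_{\ref{L:R-2-3-Rem}}$ time-independent.
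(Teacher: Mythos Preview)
Your proof is correct and follows essentially the same approach as the paper: bound the increment factor ($|D\kappa(x_{\pi_\delta(s)})-D\kappa(x_s)|$ via Lemma~\ref{L:Sec-der-Kappa}, respectively $|f(x_{\pi_\delta(s)})-f(x_s)|$ via the local Lipschitz condition and Lemma~\ref{L:Sampling-Difference}) by $C\delta$, use $\sup_{t\ge 0}|x_t|<\infty$ for the remaining polynomial factors, and finish with Assumption~\ref{A:Poly-Int}. One minor point: the uniform boundedness of $D\kappa$ comes from the global Lipschitz condition on $\kappa$ in Assumption~\ref{A:Poly-Lip}, not from Assumption~\ref{A:Poly-growth}.
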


\begin{proof}
Using Lemma \ref{L:Sec-der-Kappa} to handle term $|D\kappa(x_{i\delta})-D\kappa(x_s)|$, local Lipschitz continuity of $f$, boundedness of the derivative of $\kappa$, Assumption \ref{A:Poly-Int} and Lemma \ref{L:Sampling-Difference} for the term $|x_s-x_{\pi_\delta(s)}|^p,$ we obtain
 % the integral representation of $x_t$ \color{blue} Probably mention the lemma for the term $|x_s-x_{\pi_\delta(s)}|^p.$ \color{black},
\begin{equation*}
\begin{aligned}
|\mathscr{R}_2^\delta(t)|^p &\le C \left(1+\sup_{t\ge 0}|x_t|\right)^p\left[ \sum_{i=0}^{\lfloor \frac{t}{\delta}\rfloor-1}\int_{i\delta}^{(i+1)\delta}|D\kappa(x_{i\delta})-D\kappa(x_s)|e^{\int_s^t[Df(x_u)+D\kappa (x_u)]du}\thinspace ds\right]^p \le \delta^p  C, \\
|\mathscr{R}_3^\delta(t)|^p& \le C \left[\sum_{i=0}^{\lfloor \frac{t}{\delta}\rfloor-1}\int_{i\delta}^{(i+1)\delta}|f(x_{i\delta})-f(x_s)|e^{\int_s^t[Df(x_u)+D\kappa (x_u)]du}\thinspace ds\right]^p \le \delta^p  C.
\end{aligned}
\end{equation*}
Putting these estimates together, we obtain the required result.
\end{proof}

\begin{lemma}\label{L:M-1-R-term}
Let $x_{t}$ be the solution of equation \eqref{E:det-sys} and
$${\mathscr{R}}_t^{\eps,\delta}\triangleq \frac{\delta}{2\eps} \left[\sum_{i=0}^{\lfloor \frac{t}{\delta}\rfloor-1}\delta D\kappa(x_{i\delta})f(x_{i\delta})e^{\int_{(i+1)\delta}^t[Df(x_u)+D\kappa (x_u)]du}- \int_0^t D\kappa(x_s)f(x_s)e^{\int_s^t[Df(x_u)+D\kappa (x_u)]du}\thinspace ds\right].$$
Then, for any integer $p\ge 1,$ there exists a positive constant $C_{\ref{L:M-1-R-term}}$ such that
\begin{equation*}
\sup_{t\ge 0}|\mathscr{R}_t^{\eps,\delta}|^p \le \frac{\delta^{2p}}{\eps^p}  C_{\ref{L:M-1-R-term}}.
\end{equation*}
\end{lemma}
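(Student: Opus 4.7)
My strategy is to express the bracketed quantity inside $\mathscr{R}_t^{\eps,\delta}$ as the sum $\mathscr{R}_1^\delta(t)+\mathscr{R}_2^\delta(t)+\mathscr{R}_3^\delta(t)$ (which Lemmas \ref{L:R-1-Rem} and \ref{L:R-2-3-Rem} already bound by $C\delta$) plus a harmless boundary integral over $[\tau,t]$ with $\tau\triangleq\lfloor t/\delta\rfloor\delta$. Once this identity is in hand, the prefactor $\delta/(2\eps)$ turns the $O(\delta)$ bound into $O(\delta^{2}/\eps)$, and raising to the $p$-th power yields the claimed $\delta^{2p}/\eps^p$.

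To obtain the identity, I would split $\int_0^t(\cdot)\,ds=\int_0^{\tau}(\cdot)\,ds+\int_{\tau}^t(\cdot)\,ds$ and evaluate the inner integral in the definition of $\mathscr{R}_1^\delta(t)$ using
\[
\int_s^{(i+1)\delta}\tfrac{d}{du}e^{\int_u^t[Df(x_r)+D\kappa(x_r)]\,dr}\,du=e^{\int_{(i+1)\delta}^t[Df+D\kappa]\,dr}-e^{\int_s^t[Df+D\kappa]\,dr}.
\]
This rewrites $\mathscr{R}_1^\delta(t)$ as the Riemann sum minus the sum of piecewise integrals with frozen integrand $D\kappa(x_{i\delta})f(x_{i\delta})$. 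Adding $\mathscr{R}_2^\delta(t)+\mathscr{R}_3^\delta(t)$ then telescopes the latter, via the two consecutive replacements $D\kappa(x_{i\delta})\to D\kappa(x_s)$ and $f(x_{i\delta})\to f(x_s)$, into $\int_0^{\tau}D\kappa(x_s)f(x_s)e^{\int_s^t[Df+D\kappa]\,du}\,ds$. Therefore
\[
\mathscr{R}_1^\delta(t)+\mathscr{R}_2^\delta(t)+\mathscr{R}_3^\delta(t)=\tfrac{2\eps}{\delta}\mathscr{R}_t^{\eps,\delta}+\int_{\tau}^{t}D\kappa(x_s)f(x_s)e^{\int_s^t[Df(x_u)+D\kappa(x_u)]\,du}\,ds.
\]

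The boundary integral on $[\tau,t]$ is uniformly $O(\delta)$: the interval has length at most $\delta$; $D\kappa$ is bounded by the Lipschitz constant of $\kappa$ (Assumption \ref{A:Poly-Lip}); $\sup_{s\ge 0}|x_s|<\infty$ by Lemma \ref{L:Poly-p-Moment}, so $f(x_s)$ is uniformly bounded by continuity; and on this interval $e^{\int_s^t[Df(x_u)+D\kappa(x_u)]\,du}\le e^{C\delta}$. Combining this with the $C\delta$ bounds from Lemmas \ref{L:R-1-Rem} and \ref{L:R-2-3-Rem} gives $\sup_{t\ge 0}|(2\eps/\delta)\mathscr{R}_t^{\eps,\delta}|\le C\delta$, whence $\sup_{t\ge 0}|\mathscr{R}_t^{\eps,\delta}|^{p}\le C\delta^{2p}/\eps^{p}$. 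The only step requiring attention is the telescoping identity itself, which is a bookkeeping exercise (sign-tracking and noticing that the ``missing'' endpoint interval $[\tau,t]$ has length $\le\delta$) rather than a genuine analytical difficulty.
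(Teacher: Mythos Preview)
Your proposal is correct and follows essentially the same approach as the paper: both decompose the bracketed quantity into $\mathscr{R}_1^\delta(t)+\mathscr{R}_2^\delta(t)+\mathscr{R}_3^\delta(t)$ plus the boundary integral over $[\delta\lfloor t/\delta\rfloor,t]$, invoke Lemmas \ref{L:R-1-Rem} and \ref{L:R-2-3-Rem}, and bound the boundary piece using $t-\pi_\delta(t)<\delta$ together with the uniform bounds on $x_t$. The only cosmetic difference is that you apply the lemmas with $p=1$ and raise to the $p$-th power at the end, whereas the paper carries the exponent $p$ throughout.
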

\begin{proof}
In the expression of $\mathscr{R}_t^{\eps,\delta}$ above, writing the integral $\int_0^t D\kappa(x_s)f(x_s)e^{\int_s^t[Df(x_u)+D\kappa (x_u)]du}\thinspace ds$ in the Riemann sum form, we have
\begin{equation}\label{E:Rem-CLT}
\begin{aligned}
&\sum_{i=0}^{\lfloor \frac{t}{\delta}\rfloor-1}\delta D\kappa(x_{i\delta})f(x_{i\delta})e^{\int_{(i+1)\delta}^t[Df(x_u)+D\kappa (x_u)]du}- \int_0^t D\kappa(x_s)f(x_s)e^{\int_s^t[Df(x_u)+D\kappa (x_u)]du}\thinspace ds\\
&= \sum_{i=0}^{\lfloor \frac{t}{\delta}\rfloor-1}\int_{i\delta}^{(i+1)\delta} \left[D\kappa(x_{i\delta})f(x_{i\delta})e^{\int_{(i+1)\delta}^t[Df(x_u)+D\kappa (x_u)]du}- D\kappa(x_s)f(x_s)e^{\int_s^t[Df(x_u)+D\kappa (x_u)]du}
\right] ds\\
& \qquad \qquad \qquad \qquad \qquad \qquad \quad \qquad \qquad \qquad \qquad - \int_{\delta\lfloor \frac{t}{\delta}\rfloor}^{t}D\kappa(x_s)f(x_s)e^{\int_s^t[Df(x_u)+D\kappa (x_u)]du}\thinspace ds.
\end{aligned}
\end{equation}
Now, for each $i\in \{0,1,\cdots,\lfloor \frac{t}{\delta}\rfloor-1\}$, using a simple algebra for the integrand of the first term on the right-hand side of the above equation, we get
\begin{equation*}
\begin{aligned}
D\kappa(x_{i\delta})&f(x_{i\delta})e^{\int_{(i+1)\delta}^t[Df(x_u)+D\kappa (x_u)]du}- D\kappa(x_s)f(x_s)e^{\int_s^t[Df(x_u)+D\kappa (x_u)]du}\\ &= D\kappa(x_{i\delta})f(x_{i\delta})e^{\int_{(i+1)\delta}^t[Df(x_u)+D\kappa (x_u)]du}
-D\kappa(x_s)f(x_{i\delta})e^{\int_s^t[Df(x_u)+D\kappa (x_u)]du}\\
&\qquad \qquad \qquad  +D\kappa(x_s)f(x_{i\delta})e^{\int_s^t[Df(x_u)+D\kappa (x_u)]du}
- D\kappa(x_s)f(x_s)e^{\int_s^t[Df(x_u)+D\kappa (x_u)]du}.
\end{aligned}
\end{equation*}
Thus,
\begin{equation}\label{E:Rem-CLT-1}
\begin{aligned}
D\kappa(x_{i\delta})&f(x_{i\delta})e^{\int_{(i+1)\delta}^t[Df(x_u)+D\kappa (x_u)]du}- D\kappa(x_s)f(x_s)e^{\int_s^t[Df(x_u)+D\kappa (x_u)]du}\\
&=D\kappa(x_{i\delta})f(x_{i\delta})e^{\int_{(i+1)\delta}^t[Df(x_u)+D\kappa (x_u)]du}- D\kappa(x_{i\delta})f(x_{i\delta})e^{\int_s^t[Df(x_u)+D\kappa (x_u)]du}\\
&\qquad \qquad \quad+ D\kappa(x_{i\delta})f(x_{i\delta})e^{\int_s^t[Df(x_u)+D\kappa (x_u)]du}-D\kappa(x_s)f(x_{i\delta})e^{\int_s^t[Df(x_u)+D\kappa (x_u)]du}\\
&\qquad \qquad \qquad  +D\kappa(x_s)f(x_{i\delta})e^{\int_s^t[Df(x_u)+D\kappa (x_u)]du}
- D\kappa(x_s)f(x_s)e^{\int_s^t[Df(x_u)+D\kappa (x_u)]du}\\
&=D\kappa(x_{i\delta})f(x_{i\delta})\int_{s}^{(i+1)\delta}\frac{d}{du}e^{\int_u^t[Df(x_r)+D\kappa (x_r)]dr}\thinspace du\\
&\qquad \qquad \qquad+ D\kappa(x_{i\delta})f(x_{i\delta})e^{\int_s^t[Df(x_u)+D\kappa (x_u)]du}-D\kappa(x_s)f(x_{i\delta})e^{\int_s^t[Df(x_u)+D\kappa (x_u)]du}\\
&\qquad \qquad \qquad+D\kappa(x_s)f(x_{i\delta})e^{\int_s^t[Df(x_u)+D\kappa (x_u)]du}
- D\kappa(x_s)f(x_s)e^{\int_s^t[Df(x_u)+D\kappa (x_u)]du}.
\end{aligned}\end{equation}
Hence, for any integer $p\ge 1,$ from \eqref{E:Rem-CLT} and \eqref{E:Rem-CLT-1}, we get
\begin{equation*}
\begin{aligned}
&|\mathscr{R}_t^{\eps,\delta}|^p \le \frac{\delta^p}{\eps^p}\left|\sum_{i=0}^{\lfloor \frac{t}{\delta}\rfloor-1}\delta D\kappa(x_{i\delta})f(x_{i\delta})e^{\int_{(i+1)\delta}^t[Df(x_u)+D\kappa (x_u)]du}- \int_0^t D\kappa(x_s)f(x_s)e^{\int_s^t[Df(x_u)+D\kappa (x_u)]du}\thinspace ds\right|^p \\
&\quad  \le C\left|\sum_{i=0}^{\lfloor \frac{t}{\delta}\rfloor-1}\int_{i\delta}^{(i+1)\delta} \left[D\kappa(x_{i\delta})f(x_{i\delta})e^{\int_{(i+1)\delta}^t[Df(x_u)+D\kappa (x_u)]du}- D\kappa(x_s)f(x_s)e^{\int_s^t[Df(x_u)+D\kappa (x_u)]du}
\right] ds \right|^p\\
& \qquad \qquad \qquad \qquad \qquad \qquad \qquad \qquad \qquad \qquad \qquad    + C\left|\int_{\delta\lfloor \frac{t}{\delta}\rfloor}^{t}D\kappa(x_s)f(x_s)e^{\int_s^t[Df(x_u)+D\kappa (x_u)]du}\thinspace ds \right|^p.
\end{aligned}
\end{equation*}
Therefore,
\begin{equation*}
\begin{aligned}
&|\mathscr{R}_t^{\eps,\delta}|^p \le C\left|\sum_{i=0}^{\lfloor \frac{t}{\delta}\rfloor-1}\int_{i\delta}^{(i+1)\delta}D\kappa(x_{i\delta})f(x_{i\delta})\int_{s}^{(i+1)\delta}\frac{d}{du}e^{\int_u^t[Df(x_r)+D\kappa (x_r)]dr}\thinspace du \thinspace ds  \right|^p\\
&\quad     + C\left|\sum_{i=0}^{\lfloor \frac{t}{\delta}\rfloor-1}\int_{i\delta}^{(i+1)\delta}\left[D\kappa(x_{i\delta})f(x_{i\delta})e^{\int_s^t[Df(x_u)+D\kappa (x_u)]du}-D\kappa(x_s)f(x_{i\delta})e^{\int_s^t[Df(x_u)+D\kappa (x_u)]du}\right]ds \right|^p\\
\end{aligned}
\end{equation*}
\begin{equation*}
\begin{aligned}
&\quad   + C\left|\sum_{i=0}^{\lfloor \frac{t}{\delta}\rfloor-1}\int_{i\delta}^{(i+1)\delta}\left[ D\kappa(x_s)f(x_{i\delta})e^{\int_s^t[Df(x_u)+D\kappa (x_u)]du}
- D\kappa(x_s)f(x_s)e^{\int_s^t[Df(x_u)+D\kappa (x_u)]du}\right]ds\right|^p\\
& \qquad \qquad \qquad \qquad \qquad \qquad \qquad \qquad   +C \left|\int_{\delta\lfloor \frac{t}{\delta}\rfloor}^{t}D\kappa(x_s)f(x_s)e^{\int_s^t[Df(x_u)+D\kappa (x_u)]du}\thinspace ds \right|^p\\
& \qquad \qquad \qquad   \triangleq C\left(\left|\mathscr{R}_1^\delta(t)\right|^p + \left|\mathscr{R}_2^\delta(t)\right|^p+ \left|\mathscr{R}_3^\delta(t)\right|^p+\left|\int_{\delta\lfloor \frac{t}{\delta}\rfloor}^{t}D\kappa(x_s)f(x_s)e^{\int_s^t[Df(x_u)+D\kappa (x_u)]du}\thinspace ds \right|^p\right).
\end{aligned}
\end{equation*}
We now use the condition $|\int_{\delta\lfloor \frac{t}{\delta}\rfloor}^{t}e^{\int_s^t[Df(x_u)+D\kappa (x_u)]du}\thinspace ds|<\delta$, boundedness of the derivative of $\kappa$ and polynomial growth of the function $f$ for the last term in the above equation. Lemmas \ref{L:R-1-Rem}, \ref{L:R-2-3-Rem}, Assumption \ref{A:Poly-Int} and the fact that $t-\delta\lfloor \frac{t}{\delta}\rfloor < \delta$ for any $\delta>0$ yield the required result.
\end{proof}

We are now in the position to prove Lemma \ref{L:M-1-S-1}.
\begin{lemma}\label{L:M-1-S-1}
Let $x_{t}$ be the solution of equation \eqref{E:det-sys} and
$$\mathscr{S}_1^{\eps,\delta}(t) \triangleq  \int_0^t \left[(1/\eps) D\kappa (x_{\pi_\delta(s)}) f(x_{\pi_\delta(s)})[{s-\pi_\delta(s)}]-\frac{\delta}{2 \eps}D\kappa(x_s)f(x_s)\right]e^{\int_s^t[Df(x_u)+D\kappa (x_u)]du}\thinspace ds.$$
Then, for any integer $p\ge 1,$ we have a positive constant $C_{\ref{L:M-1-S-1}}$ such that
$$\sup_{t \ge 0}|\mathscr{S}_1^{\eps,\delta}(t)|^p\le \frac{\delta^{2p}}{\eps^p}  C_{\ref{L:M-1-S-1}}.$$
\end{lemma}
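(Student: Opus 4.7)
The plan is to reduce $\mathscr{S}_1^{\eps,\delta}(t)$ to the remainder quantity $\mathscr{R}_t^{\eps,\delta}$ estimated in Lemma \ref{L:M-1-R-term}, plus terms that are elementary to bound. First I would partition the interval $[0,t]$ into the sampling cells $[i\delta,(i+1)\delta)$ for $i=0,\ldots,\lfloor t/\delta\rfloor-1$, together with the leftover tail $[\delta\lfloor t/\delta\rfloor,t]$ of length strictly less than $\delta$. On the $i$-th cell we have $\pi_\delta(s)=i\delta$ and $s-\pi_\delta(s)=s-i\delta$, so that the first term of the integrand becomes $(1/\eps)D\kappa(x_{i\delta})f(x_{i\delta})(s-i\delta)e^{\int_s^t[Df(x_u)+D\kappa(x_u)]du}$. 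The tail is trivially bounded by $C\delta^2/\eps$ (length $\delta$, integrand $O(\delta/\eps)$), hence in $p$-th power by $C\delta^{2p}/\eps^p$.

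The next step is to freeze the exponential kernel at the right endpoint of each cell. Writing
\begin{equation*}
e^{\int_s^t[Df(x_u)+D\kappa(x_u)]du}=e^{\int_{(i+1)\delta}^t[Df(x_u)+D\kappa(x_u)]du}+\int_s^{(i+1)\delta}\frac{d}{du}e^{\int_u^t[Df(x_r)+D\kappa(x_r)]dr}\,du,
\end{equation*}
and using the closed-form $\int_{i\delta}^{(i+1)\delta}(s-i\delta)\,ds=\delta^2/2$, the cell contribution reduces to
\begin{equation*}
\frac{\delta^2}{2\eps}D\kappa(x_{i\delta})f(x_{i\delta})e^{\int_{(i+1)\delta}^t[Df(x_u)+D\kappa(x_u)]du}+\text{(kernel--freeze error)}_i.
\end{equation*}
The kernel–freeze error on each cell is controlled exactly as in the proof of Lemma \ref{L:R-1-Rem}: one uses boundedness of $D\kappa$, the polynomial growth of $f$ together with $\sup_t|x_t|<\infty$ (Lemma \ref{L:Poly-p-Moment}), and Assumption \ref{A:Poly-Int} to gain an extra factor of $\delta$, yielding summed error of order $\delta^2/\eps$, hence $\delta^{2p}/\eps^p$ after taking the $p$-th power.

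After these two reductions, the main part of $\mathscr{S}_1^{\eps,\delta}(t)$ has become
\begin{equation*}
\frac{\delta}{2\eps}\left[\sum_{i=0}^{\lfloor t/\delta\rfloor-1}\delta\, D\kappa(x_{i\delta})f(x_{i\delta})e^{\int_{(i+1)\delta}^t[Df(x_u)+D\kappa(x_u)]du}-\int_0^t D\kappa(x_s)f(x_s)e^{\int_s^t[Df(x_u)+D\kappa(x_u)]du}\,ds\right],
\end{equation*}
which is precisely $\mathscr{R}_t^{\eps,\delta}$ from Lemma \ref{L:M-1-R-term}. Applying that lemma gives a bound of $\delta^{2p}/\eps^p$. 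Combining all three pieces (cell main, kernel freeze, tail) via the elementary inequality $(a_1+\cdots+a_n)^p\le C\sum a_i^p$ concludes the proof.

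The main obstacle, as usual in this paper, is extracting uniform-in-time control when summing over the $\lfloor t/\delta\rfloor$ cells: naively one would lose a factor of $t/\delta$. The saving device is the estimate \eqref{E:R_1-CLT-3} from Lemma \ref{L:R-1-Rem}, which, via the factorization $e^{\int_u^t}=e^{\int_s^t}e^{-\int_s^u}$ and the uniform bound on $e^{-\int_s^u}$ for $u-s\le\delta$, converts the sum over cells back into a single integral $\int_0^t e^{\int_s^t[Df(x_u)+D\kappa(x_u)]du}\,ds$ which is uniformly bounded by Assumption \ref{A:Poly-Int}. The same device is what makes Lemma \ref{L:M-1-R-term} time-independent, and it is the only nontrivial input needed here.
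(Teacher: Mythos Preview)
Your proof is correct and follows essentially the same route as the paper. The only cosmetic difference is that the paper phrases the extraction of the $\tfrac{\delta^2}{2}e^{\int_{(i+1)\delta}^t[\,\cdot\,]}$ main term as an integration by parts on $\int_{i\delta}^{(i+1)\delta}(s-i\delta)e^{\int_s^t[\,\cdot\,]}\,ds$, whereas you obtain it by writing $e^{\int_s^t}=e^{\int_{(i+1)\delta}^t}+\int_s^{(i+1)\delta}\tfrac{d}{du}e^{\int_u^t}\,du$ and integrating the constant-kernel piece in closed form; the resulting main term is identical and the respective error terms are both of order $\delta^2/\eps$ per cell and controlled via the device of equation~\eqref{E:R_1-CLT-3} together with Assumption~\ref{A:Poly-Int}, exactly as you indicate.
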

\begin{proof}
%[Proof of Lemma \ref{L:M-1-S-1}]
We recall
$$\mathscr{S}_1^{\eps,\delta}(t)= \int_0^t \left[(1/\eps) D\kappa (x_{\pi_\delta(s)}) f(x_{\pi_\delta(s)})[{s-\pi_\delta(s)}]-\frac{\delta}{2 \eps}D\kappa(x_s)f(x_s)\right]e^{\int_s^t[Df(x_u)+D\kappa (x_u)]du}\thinspace ds.$$
Moving in the direction of getting an estimate for the quantity $\sup_{t \ge 0}|\mathscr{S}_1^{\eps,\delta}(t)|^p$, we first write the integral (below) in the Riemann sum form followed by using integration by parts formula to obtain
\begin{equation*}
\begin{aligned}
\int_0^t&(1/\eps) D\kappa (x_{\pi_\delta(s)}) f(x_{\pi_\delta(s)})[{s-\pi_\delta(s)}]e^{\int_s^t[Df(x_u)+D\kappa (x_u)]du}\thinspace ds\\
&\qquad \qquad \quad=\sum_{i=0}^{\lfloor \frac{t}{\delta}\rfloor-1}\frac{1}{\eps}D\kappa(x_{i\delta})f(x_{i\delta})\int_{i\delta}^{(i+1)\delta}(s-i\delta)e^{\int_s^t[Df(x_u)+D\kappa (x_u)]du}\thinspace ds\\
& \qquad \qquad \qquad \qquad \qquad \qquad \quad +\frac{1}{\eps}D\kappa(x_{\pi_\delta(t)})f(x_{\pi_\delta(t)})\int_{\delta\lfloor \frac{t}{\delta}\rfloor}^{t}[s-\pi_\delta(s)]e^{\int_s^t[Df(x_u)+D\kappa (x_u)]du}\thinspace ds\\
&\qquad \qquad \quad=\sum_{i=0}^{\lfloor \frac{t}{\delta}\rfloor-1}\frac{1}{\eps}D\kappa(x_{i\delta})f(x_{i\delta})\frac{\delta^2}{2}e^{\int_{(i+1)\delta}^t[Df(x_u)+D\kappa (x_u)]du}\\
&\qquad \qquad \quad  +\sum_{i=0}^{\lfloor \frac{t}{\delta}\rfloor-1}\frac{1}{2\eps}D\kappa(x_{i\delta})f(x_{i\delta})\int_{i\delta}^{(i+1)\delta}(s-i\delta)^2 [Df(x_s)+D\kappa (x_s)] e^{\int_s^t[Df(x_u)+D\kappa (x_u)]du}\thinspace ds\\
&\qquad \qquad \qquad \qquad \qquad \qquad \quad \quad +\frac{1}{\eps}D\kappa(x_{\pi_\delta(t)})f(x_{\pi_\delta(t)})\int_{\delta\lfloor \frac{t}{\delta}\rfloor}^{t}[s-\pi_\delta(s)]e^{\int_s^t[Df(x_u)+D\kappa (x_u)]du}\thinspace ds.
\end{aligned}
\end{equation*}
Therefore,
\begin{equation*}
\begin{aligned}
 \mathscr{S}_1^{\eps,\delta}(t)& = \frac{\delta}{2\eps} \left[\sum_{i=0}^{\lfloor \frac{t}{\delta}\rfloor-1}\delta D\kappa(x_{i\delta})f(x_{i\delta})e^{\int_{(i+1)\delta}^t[Df(x_u)+D\kappa (x_u)]du}- \int_0^t D\kappa(x_s)f(x_s)e^{\int_s^t[Df(x_u)+D\kappa (x_u)]du}\thinspace ds\right]\\
&\qquad \qquad \quad   +\sum_{i=0}^{\lfloor \frac{t}{\delta}\rfloor-1}\frac{1}{2\eps}D\kappa(x_{i\delta})f(x_{i\delta})\int_{i\delta}^{(i+1)\delta}(s-i\delta)^2 [Df(x_s)+D\kappa (x_s)] e^{\int_s^t[Df(x_u)+D\kappa (x_u)]du}\thinspace ds\\
&\qquad \qquad \qquad \qquad \qquad \qquad \quad +\frac{1}{\eps}D\kappa(x_{\pi_\delta(t)})f(x_{\pi_\delta(t)})\int_{\delta\lfloor \frac{t}{\delta}\rfloor}^{t}[s-\pi_\delta(s)]e^{\int_s^t[Df(x_u)+D\kappa (x_u)]du}\thinspace ds.
\end{aligned}
\end{equation*}
Now, recalling the definition of $\mathscr{R}_t^{\eps,\delta}$ from Lemma \ref{L:M-1-R-term}, we have
\begin{equation*}
\begin{aligned}
\left|\mathscr{S}_1^{\eps,\delta}(t) \right|^p & \le C|\mathscr{R}_t^{\eps,\delta}|^p
 + \frac{\delta^p}{\eps^p}C \left(1+\sup_{t\ge 0}|x_t|^q \right) \delta^p \left(\int_0^{\delta\lfloor \frac{t}{\delta}\rfloor}|Df(x_s)+D\kappa (x_s)| e^{\int_s^t[Df(x_u)+D\kappa (x_u)]du}\thinspace ds\right)^p \\
 & \qquad \qquad \qquad \qquad \qquad \qquad \quad  \qquad  + \frac{\delta^p}{\eps^p}C \left(1+\sup_{t\ge 0}|x_t|^q \right) \left(\int_{\delta\lfloor \frac{t}{\delta}\rfloor}^{t}e^{\int_s^t[Df(x_u)+D\kappa (x_u)]du}\thinspace ds\right)^p.
\end{aligned}
\end{equation*}
The last equation is obtained by using the boundedness of derivatives of $\kappa$ and the polynomial growth of $f$. Employing Lemma \ref{L:M-1-R-term} for the term $|\mathscr{R}_t^{\eps,\delta}|^p$, Assumption \ref{A:Poly-Int} for the term $\int_0^t e^{\int_s^t[Df(x_u)+D\kappa (x_u)]du}\thinspace ds$, assuming the condition $|\int_{\delta\lfloor \frac{t}{\delta}\rfloor}^{t}e^{\int_s^t[Df(x_u)+D\kappa (x_u)]du}\thinspace ds|<\delta$ with a note $t-\delta\lfloor \frac{t}{\delta}\rfloor < \delta$ for any $\delta>0$, we obtain the required estimate.
\end{proof}

Next, we state and prove two supporting Lemmas \ref{L:M-2-S-2} and \ref{L:M-2-Intermediate}; these results were used in the proof of Lemma \ref{L:M2-estimate}.

\begin{lemma}\label{L:M-2-S-2}
Let $x_{t}$ be the solution of equation \eqref{E:det-sys} and
$$\mathscr{S}_2^{\eps,\delta}(t) \triangleq  \int_0^t \left[(1/\eps) D\kappa (x_{\pi_\delta(s)}) \kappa(x_{\pi_\delta(s)})[{s-\pi_\delta(s)}]-\frac{\delta}{2 \eps}D\kappa(x_s)\kappa(x_s)\right]e^{\int_s^t[Df(x_u)+D\kappa (x_u)]du}\thinspace ds.$$
Then, for any integer $p\ge 1,$ we have a positive constant $C_{\ref{L:M-2-S-2}}$ such that
$$\sup_{t \ge 0}|\mathscr{S}_2^{\eps,\delta}(t)|^p\le \frac{\delta^{2p}}{\eps^p} C_{\ref{L:M-2-S-2}}.$$
\end{lemma}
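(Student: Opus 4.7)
The plan is to mimic the proof of Lemma \ref{L:M-1-S-1} line by line, with $f$ replaced by $\kappa$ throughout. The replacement is legitimate (and actually simpler) because the only properties of $f$ used in Lemma \ref{L:M-1-S-1} and its supporting Lemmas \ref{L:R-1-Rem}--\ref{L:M-1-R-term} are the polynomial growth and the uniform-in-time boundedness $\sup_{t\ge 0}|x_t|<\infty$ from Lemma \ref{L:Poly-p-Moment}; since $\kappa$ is globally Lipschitz and bounded (Assumptions \ref{A:Poly-Lip}, \ref{A:Poly-growth}) and has bounded second derivative, all the corresponding bounds either remain valid or improve.

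Concretely, first I would rewrite the integral
\[
\int_0^t \frac{1}{\eps} D\kappa(x_{\pi_\delta(s)})\kappa(x_{\pi_\delta(s)})[s-\pi_\delta(s)]\, e^{\int_s^t[Df(x_u)+D\kappa(x_u)]du}\, ds
\]
as a Riemann sum over $[i\delta,(i+1)\delta)$, $i=0,\dots,\lfloor t/\delta\rfloor-1$, plus a boundary term over $[\delta\lfloor t/\delta\rfloor, t]$. Applying integration by parts on each subinterval (exactly as in Lemma \ref{L:M-1-S-1}) produces, after subtracting the second piece $\frac{\delta}{2\eps}\int_0^t D\kappa(x_s)\kappa(x_s)\,e^{\int_s^t[\cdots]du}ds$, the decomposition
\begin{equation*}
\begin{aligned}
\mathscr{S}_2^{\eps,\delta}(t) &= \frac{\delta}{2\eps}\Bigl[\sum_{i=0}^{\lfloor t/\delta\rfloor-1}\!\delta\, D\kappa(x_{i\delta})\kappa(x_{i\delta})\, e^{\int_{(i+1)\delta}^t[Df(x_u)+D\kappa(x_u)]du}\\
&\qquad\qquad\qquad\qquad\qquad\qquad  - \int_0^t D\kappa(x_s)\kappa(x_s)\,e^{\int_s^t[Df(x_u)+D\kappa(x_u)]du}\,ds\Bigr]\\
&\quad + \sum_{i=0}^{\lfloor t/\delta\rfloor-1}\frac{1}{2\eps} D\kappa(x_{i\delta})\kappa(x_{i\delta})\!\int_{i\delta}^{(i+1)\delta}\!(s-i\delta)^2[Df(x_s)+D\kappa(x_s)]\, e^{\int_s^t[\cdots]du}\,ds\\
&\quad + \frac{1}{\eps}D\kappa(x_{\pi_\delta(t)})\kappa(x_{\pi_\delta(t)})\!\int_{\delta\lfloor t/\delta\rfloor}^{t}\![s-\pi_\delta(s)]\,e^{\int_s^t[\cdots]du}\,ds.
\end{aligned}
\end{equation*}

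Next, I would introduce the analogue of $\mathscr{R}_t^{\eps,\delta}$ from Lemma \ref{L:M-1-R-term},
\[
\tilde{\mathscr{R}}_t^{\eps,\delta}\triangleq \frac{\delta}{2\eps}\!\left[\sum_{i}\!\delta D\kappa(x_{i\delta})\kappa(x_{i\delta})\,e^{\int_{(i+1)\delta}^t[\cdots]du}-\!\!\int_0^t\!\! D\kappa(x_s)\kappa(x_s)\,e^{\int_s^t[\cdots]du}ds\right],
\]
and verify that $\sup_{t\ge 0}|\tilde{\mathscr{R}}_t^{\eps,\delta}|^p \le C\delta^{2p}/\eps^p$ by repeating the argument of Lemma \ref{L:M-1-R-term} with the analogues of Lemmas \ref{L:R-1-Rem} and \ref{L:R-2-3-Rem}: in Lemma \ref{L:R-1-Rem}'s derivation one only invoked the polynomial growth of $f$ (to bound $|D\kappa f|$) and Assumption \ref{A:Poly-Int}, and both work with $\kappa$ using the boundedness of $\kappa$; in Lemma \ref{L:R-2-3-Rem}'s derivation one used Lemma \ref{L:Sec-der-Kappa} for $|D\kappa(x_{i\delta})-D\kappa(x_s)|$ and the local Lipschitz continuity of $f$, and for the $\kappa$-variant one instead uses global Lipschitz continuity of $\kappa$ (Assumption \ref{A:Poly-Lip}).

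For the remaining two pieces: the second sum is bounded by
\[
\frac{C\delta^p}{\eps^p}\cdot \delta^p\left(\int_0^{\delta\lfloor t/\delta\rfloor}|Df(x_s)+D\kappa(x_s)|\,e^{\int_s^t[\cdots]du}\,ds\right)^p \le \frac{C\delta^{2p}}{\eps^p}
\]
by the boundedness of $\kappa$, $D\kappa$, and Assumption \ref{A:Poly-Int} (handling $Df(x_s)$ via polynomial growth together with Lemma \ref{L:Poly-p-Moment}), while the boundary term is at most $\frac{C\delta^p}{\eps^p}\cdot\bigl|\int_{\delta\lfloor t/\delta\rfloor}^{t}e^{\int_s^t[\cdots]du}ds\bigr|^p\le C\delta^{2p}/\eps^p$ using $t-\delta\lfloor t/\delta\rfloor<\delta$. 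Combining these three estimates via the triangle inequality yields the desired bound $\sup_{t\ge 0}|\mathscr{S}_2^{\eps,\delta}(t)|^p\le C\delta^{2p}/\eps^p$.

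The only nontrivial issue is bookkeeping: verifying that each application of Lemmas \ref{L:R-1-Rem}--\ref{L:M-1-R-term} transfers to the $\kappa$-variant without loss, but since $\kappa$ is strictly better behaved than $f$ (bounded with bounded derivatives of all needed orders), no new ingredient is required and the proof is structurally identical to that of Lemma \ref{L:M-1-S-1}.
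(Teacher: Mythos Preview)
Your proposal is correct and takes essentially the same approach as the paper, which simply states that the proof follows by similar calculations to those of Lemma \ref{L:M-1-S-1}. Your detailed verification that replacing $f$ by $\kappa$ only simplifies matters (boundedness and global Lipschitz continuity of $\kappa$ replacing polynomial growth and local Lipschitz continuity of $f$) is accurate and faithful to the intended argument.
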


\begin{proof}
%[Proof of Lemma \ref{L:M-2-S-2}]
The proof follows by similar calculations to the proof of Lemma \ref{L:M-1-S-1}.
\end{proof}

\begin{lemma}\label{L:M-2-Intermediate}
Let $x_{t}$ and $X_{t}^{\varepsilon,\delta}$ solve \eqref{E:det-sys} and \eqref{E:W-S-SDE-With-pert}, respectively and $p\ge 1$ be an integer, then, for all sufficiently small $\eps,\delta>0,$ there exists a positive constant $C_{\ref{L:M-2-Intermediate}}$ such that
\begin{equation*}
\sup_{t \ge 0}\BE\left[ \int_0^t \left|X_{\pi_\delta(s)}^{\eps,\delta}-x_{\pi_\delta(s)}\right|e^{\int_s^t[Df(x_u)+D\kappa (x_u)]du}\thinspace ds\right]^p \le C_{\ref{L:M-2-Intermediate}}(\delta^p+\eps^p+{\delta}^{\frac{p}{2}}\eps^p).
\end{equation*}
\end{lemma}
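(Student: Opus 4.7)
The plan is to decouple the difference $|X_{\pi_\delta(s)}^{\eps,\delta}-x_{\pi_\delta(s)}|$ from the exponential weight, bound the difference in $L^p$ uniformly in time via Theorem \ref{T:LLN-UIT}, and then control the remaining time integral through Assumption \ref{A:Poly-Int}. Since $\pi_\delta(s)\in[0,\infty)$, Theorem \ref{T:LLN-UIT} yields
\begin{equation*}
\sup_{s\ge 0}\BE\left[|X_{\pi_\delta(s)}^{\eps,\delta}-x_{\pi_\delta(s)}|^p\right]\le C_{\ref{T:LLN-UIT}}\left(\delta^p+\eps^p+\delta^{p/2}\eps^p\right),
\end{equation*}
for all sufficiently small $\eps,\delta>0$, which is the key uniform-in-time input.

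First, I would expand the $p$-th power of the integral as a $p$-fold iterated integral over $[0,t]^p$, writing the integrand as the product of exponential factors and difference factors:
\begin{equation*}
\left(\int_0^t|X_{\pi_\delta(s)}^{\eps,\delta}-x_{\pi_\delta(s)}|\,e^{\int_s^t[Df(x_u)+D\kappa(x_u)]du}\,ds\right)^p
=\int_{[0,t]^p}\!\left(\prod_{i=1}^p e^{\int_{s_i}^t[Df(x_u)+D\kappa(x_u)]du}\right)\prod_{i=1}^p|X_{\pi_\delta(s_i)}^{\eps,\delta}-x_{\pi_\delta(s_i)}|\,ds_1\cdots ds_p.
\end{equation*}
Then, applying the paper's standard elementary inequality $a_1\cdots a_p\le C(a_1^p+\cdots+a_p^p)$ only to the difference factors (leaving the exponential weight intact since it will be handled by Assumption \ref{A:Poly-Int}), I obtain
\begin{equation*}
\prod_{i=1}^p|X_{\pi_\delta(s_i)}^{\eps,\delta}-x_{\pi_\delta(s_i)}|\le C\sum_{i=1}^p|X_{\pi_\delta(s_i)}^{\eps,\delta}-x_{\pi_\delta(s_i)}|^p.
\end{equation*}

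Next, I would take expectation, interchange it with the deterministic $s_i$-integrals (Fubini), and invoke the uniform-in-time $L^p$ bound displayed above to pull the constant $C_{\ref{T:LLN-UIT}}(\delta^p+\eps^p+\delta^{p/2}\eps^p)$ out of the $s_i$-integral in each of the $p$ summands. Factoring the remaining $(p-1)$ exponential integrals separately yields
\begin{equation*}
\BE\!\left[\cdots\right]\le C(\delta^p+\eps^p+\delta^{p/2}\eps^p)\sum_{i=1}^p\left(\int_0^t e^{\int_{s_i}^t[Df(x_u)+D\kappa(x_u)]du}ds_i\right)\!\prod_{j\neq i}\!\int_0^t e^{\int_{s_j}^t[Df(x_u)+D\kappa(x_u)]du}ds_j.
\end{equation*}
Finally, Assumption \ref{A:Poly-Int} with $m=1$ bounds each of the $p$ remaining integrals by a time-independent constant, producing the claim. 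No step is a genuine obstacle here — the only subtlety is that we must separate the difference terms from the exponential weights before applying AM-GM, since we want to preserve the exponential structure required to invoke Assumption \ref{A:Poly-Int}; applying the inequality globally to all $2p$ factors would lose that structure. An equivalent alternative is a direct application of H\"older's inequality with conjugates $(p,\frac{p}{p-1})$ writing $e^{\int_s^t[\cdots]}=e^{\int_s^t[\cdots]/p}\cdot e^{\int_s^t[\cdots](p-1)/p}$, which yields the same bound more compactly.
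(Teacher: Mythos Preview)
Your proposal is correct and follows essentially the same approach as the paper: expand the $p$-th power as a $p$-fold integral over $[0,t]^p$, apply the elementary inequality $a_1\cdots a_p\le C\sum_i a_i^p$ to the difference factors only, take expectation, invoke Theorem \ref{T:LLN-UIT} for the uniform-in-time $L^p$ bound on $|X_{\pi_\delta(s_i)}^{\eps,\delta}-x_{\pi_\delta(s_i)}|^p$, and finish with Assumption \ref{A:Poly-Int}. Your remark about preserving the exponential structure by not applying AM--GM globally is precisely the point, and the H\"older alternative you mention is a valid shortcut but not used in the paper.
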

\begin{proof}
%[Proof of Lemma \ref{L:M-2-Intermediate}]
For any integer $p\ge 1$, we start by noting
\begin{equation*}
\begin{aligned}
&\left[ \int_0^t  \left|X_{\pi_\delta(s)}^{\eps,\delta}-x_{\pi_\delta(s)}\right|e^{\int_s^t[Df(x_u)+D\kappa (x_u)]du}   \thinspace ds\right]^p \\
& \qquad \qquad \qquad \qquad \qquad  =\int_{[0,t]^p}\left( \prod_{i=1}^p e^{\int_{s_i}^t[Df(x_u)+D\kappa (x_u)]du}\right)\left( \prod_{i=1}^p  \left|X_{\pi_\delta(s_i)}^{\eps,\delta}-x_{\pi_\delta(s_i)}\right| \right)ds_1 \cdots ds_p \\
& \qquad \qquad \qquad \qquad \qquad  \le C\int_{[0,t]^p}\left( \prod_{i=1}^p e^{\int_{s_i}^t[Df(x_u)+D\kappa (x_u)]du}\right)\left( \sum_{i=1}^p  \left|X_{\pi_\delta(s_i)}^{\eps,\delta}-x_{\pi_\delta(s_i)}\right|^p \right)ds_1 \cdots ds_p.
\end{aligned}
\end{equation*}
Taking expectation on both sides of the above equation followed by Theorem \ref{T:LLN-UIT}, we get
\begin{equation*}
\begin{aligned}
\BE &\left[ \int_0^t  \left|X_{\pi_\delta(s)}^{\eps,\delta}-x_{\pi_\delta(s)}\right|e^{\int_s^t[Df(x_u)+D\kappa (x_u)]du}   \thinspace ds\right]^p \\
 & \qquad \qquad \qquad \qquad \qquad \quad \le  C\int_{[0,t]^p} \left(\prod_{i=1}^p e^{\int_{s_i}^t[Df(x_u)+D\kappa (x_u)]du}\right) \sum_{i=1}^p  \BE \left|X_{\pi_\delta(s_i)}^{\eps,\delta}-x_{\pi_\delta(s_i)}\right|^p ds_1 \cdots ds_p\\
& \qquad \qquad \qquad \qquad \qquad \quad \qquad \qquad \quad  \le C(\delta^p+\eps^p+{\delta}^{\frac{p}{2}}\eps^p)\int_{[0,t]^p} \prod_{i=1}^p e^{\int_{s_i}^t[Df(x_u)+D\kappa (x_u)]du}ds_1 \cdots ds_p.
\end{aligned}
\end{equation*}
A simplification of the last integral along with Assumption \ref{A:Poly-Int} gives the required bound.
\end{proof}

\subsection{Proof of Propositions \ref{P:CLT-R-12} and \ref{P:CLT-R-3}}\label{SS:FluctuationsControl}

This section is devoted to the proofs of Propositions \ref{P:CLT-R-12} and \ref{P:CLT-R-3}. These results were used in the proof of Proposition \ref{P:I3-Prop-Rem}.
\begin{proposition}\label{P:CLT-R-12}
Let ${\sf R}_1^{\eps,\delta}(t)$ and ${\sf R}_2^{\eps,\delta}(t)$  be defined as in equation \eqref{E:Remainder-terms} and $p\ge 1$ be an integer. Then, for all sufficiently small $\eps,\delta>0,$ there exists a positive constant $C_{\ref{P:CLT-R-12}}$ such that
\begin{equation*}
\sup_{t \ge 0}\BE\left|\int_0^t e^{\int_s^t[Df(x_u)+D\kappa (x_u)]du} {\sf R}^{\eps,\delta}_1(s)ds\right|^p \le \frac{C_{\ref{P:CLT-R-12}}}{\eps^{p}}(\delta^{2p}+\eps^{2p}+{\delta}^{p}\eps^{2p}),
\end{equation*}
\begin{equation*}
\sup_{t \ge 0}\BE \left|\int_0^t e^{\int_s^t[Df(x_u)+D\kappa (x_u)]du} {\sf R}^{\eps,\delta}_2(s)ds\right|^p\le \frac{C_{\ref{P:CLT-R-12}}}{\eps^{p}}(\delta^{2p}+\eps^{2p}+{\delta}^{p}\eps^{2p}).
\end{equation*}
\end{proposition}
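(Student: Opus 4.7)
The plan is to express ${\sf R}^{\eps,\delta}_1(s)$ and ${\sf R}^{\eps,\delta}_2(s)$ via a second-order Taylor expansion of $f$ and $\kappa$ around $x_s$. Since $f,\kappa$ are twice differentiable (Assumption \ref{A:Poly-Int}), there exist intermediate points $z_s^{1},z_s^{2}$ on the segment joining $X_s^{\eps,\delta}$ and $x_s$ such that
\begin{equation*}
{\sf R}_i^{\eps,\delta}(s) \;=\; \frac{1}{2\eps}D^{2}h_i(z_s^{i})\,\bigl(X_s^{\eps,\delta}-x_s\bigr)^{2}, \qquad i=1,2,
\end{equation*}
where $h_1=f$ and $h_2=\kappa$. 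This rewrites each remainder term as a quadratic in $X_s^{\eps,\delta}-x_s$ divided by $\eps$, so the problem reduces to controlling the $p$-th moment of a weighted time-integral of $(X_s^{\eps,\delta}-x_s)^{2}$ against the kernel $e^{\int_s^t [Df(x_u)+D\kappa(x_u)]\,du}$.

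For the ${\sf R}_2^{\eps,\delta}$ term the argument is the cleaner one, since $D^{2}\kappa$ is uniformly bounded by Assumption \ref{A:Poly-Int}. Raising $|\int_0^t \cdots ds|^{p}$, expanding as a $p$-fold iterated integral over $[0,t]^p$, and applying the elementary bound $\prod_{i=1}^p a_{s_i} \le C\sum_{i=1}^p a_{s_i}^{p}$ gives
\begin{equation*}
\BE\Bigl|\int_0^t e^{\int_s^t[Df(x_u)+D\kappa(x_u)]du}{\sf R}_2^{\eps,\delta}(s)\,ds\Bigr|^{p} \le \frac{C}{\eps^{p}} \int_{[0,t]^p} \Bigl(\prod_{i=1}^p e^{\int_{s_i}^t [Df(x_u)+D\kappa(x_u)]\,du}\Bigr) \sum_{i=1}^p \BE|X_{s_i}^{\eps,\delta}-x_{s_i}|^{2p}\,ds_1\cdots ds_p.
\end{equation*}
Theorem \ref{T:LLN-UIT} applied at the even power $2p$ yields $\BE|X_{s_i}^{\eps,\delta}-x_{s_i}|^{2p} \le C(\delta^{2p}+\eps^{2p}+\delta^{p}\eps^{2p})$, and the remaining multiple integral is uniformly bounded in $t$ by Assumption \ref{A:Poly-Int}. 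This produces the claimed estimate.

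The ${\sf R}_1^{\eps,\delta}$ term is the main obstacle, because $D^{2}f$ is only assumed to grow polynomially (Assumption \ref{A:Poly-Int}), so $|D^{2}f(z_s^{1})| \le C(1+|X_s^{\eps,\delta}|^{r}+|x_s|^{r})$ for some $r$ and one cannot simply pull it outside the expectation. The remedy is to apply Cauchy--Schwarz inside the $p$-fold integrand before invoking Theorem \ref{T:LLN-UIT}:
\begin{equation*}
\BE\bigl[|D^{2}f(z_{s_i}^{1})|^{p}\,|X_{s_i}^{\eps,\delta}-x_{s_i}|^{2p}\bigr] \le \bigl(\BE|D^{2}f(z_{s_i}^{1})|^{2p}\bigr)^{1/2} \bigl(\BE|X_{s_i}^{\eps,\delta}-x_{s_i}|^{4p}\bigr)^{1/2}.
\end{equation*}
The first factor is uniformly bounded in $s_i$ by combining the polynomial growth of $D^{2}f$ with the uniform-in-time moment bounds of Lemma \ref{L:Poly-p-Moment} (applied to both $X_{s_i}^{\eps,\delta}$ and $x_{s_i}$). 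Theorem \ref{T:LLN-UIT} at power $4p$ gives $\bigl(\BE|X_{s_i}^{\eps,\delta}-x_{s_i}|^{4p}\bigr)^{1/2} \le C(\delta^{4p}+\eps^{4p}+\delta^{2p}\eps^{4p})^{1/2} \le C(\delta^{2p}+\eps^{2p}+\delta^{p}\eps^{2p})$. Combining this with the same $p$-fold integral expansion, the bound on $\prod_i e^{\int_{s_i}^t[\cdots]du}$ from Assumption \ref{A:Poly-Int}, and the prefactor $\eps^{-p}$ from the Taylor remainder completes the estimate. The only subtlety is verifying that the polynomial growth of $D^2f$ does not leak into an extra time-dependent factor; this is precisely why the Cauchy--Schwarz split and the $4p$ moment bound are needed, rather than the naive $2p$ bound that suffices for ${\sf R}_2^{\eps,\delta}$.
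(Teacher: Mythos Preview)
Your proposal is correct and follows essentially the same approach as the paper: a second-order Taylor expansion yields ${\sf R}_i^{\eps,\delta}(s)=\frac{1}{2\eps}D^{2}h_i(z_s)(X_s^{\eps,\delta}-x_s)^{2}$, the $p$-th power of the weighted integral is expanded over $[0,t]^p$ and bounded via the product-to-sum inequality, and the key device for ${\sf R}_1^{\eps,\delta}$---a Cauchy--Schwarz split to decouple the polynomially growing $D^{2}f$ factor from $(X_s^{\eps,\delta}-x_s)^{2p}$, followed by Lemma~\ref{L:Poly-p-Moment} and Theorem~\ref{T:LLN-UIT} at order $4p$---is exactly what the paper does. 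The only cosmetic difference is that the paper applies H\"{o}lder's inequality directly to $\BE|{\sf R}_1^{\eps,\delta}(s)|^p$ before the $p$-fold expansion, whereas you expand first and then split inside the integrand; the two orderings are equivalent.
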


\begin{proof}
%[Proof of Proposition \ref{P:CLT-R-12}]
Using Taylor's theorem and the polynomial growth of the function $f$, we have
\begin{equation}\label{E:R1-R2-Est}
|{\sf R}^{\eps,\delta}_1(s)|^p \triangleq \left|\frac{f(X_s^{\eps,\delta})-f(x_s)}{\eps}-Df(x_s) Z_s^{\eps,\delta}\right|^p= \frac{1}{2^p\eps^p}\left| D^2f(z)(X_s^{\eps,\delta}-x_s)^2 \right|^p, %\le \frac{C}{\eps^p}\left|X_s^{\eps,\delta}-x_s \right|^{2p},
\end{equation}
where $z \in \BR$ is a (random) point lying on the line segment joining $x_s$ and $X_s^{\eps,\delta}$. Hence, there is some integer $r<\infty$ so that H\"{o}lder's inequality together with the bounds from Lemma \ref{L:Poly-p-Moment} give
\begin{align*}
\BE|{\sf R}^{\eps,\delta}_1(s)|^p \leq \frac{C}{\eps^p}\left(\BE\left| z \right|^{4r p} +1\right)^{1/2} \left(\BE\left| X_s^{\eps,\delta}-x_s \right|^{4p} \right)^{1/2} \le \frac{C}{\eps^p}\left(\BE\left|X_s^{\eps,\delta}-x_s \right|^{4p}\right)^{1/2},
\end{align*}
for some time-independent constant $C<\infty$. Next, for any integer $p\ge 1,$ we have
\begin{equation*}
\begin{aligned}
&\left|\int_0^te^{\int_s^t[Df(x_u)+D\kappa (x_u)]du} {\sf R}^{\eps,\delta}_1(s)ds\right|^p \le \left(\int_0^t e^{\int_{s}^t[Df(x_u)+D\kappa (x_u)]du}\left|{\sf R}^{\eps,\delta}_1(s)\right|ds\right)^p\\
& \qquad \qquad \qquad \qquad \qquad \quad \qquad \qquad \le \int_{[0,t]^p}\left( \prod_{i=1}^p e^{\int_{s_i}^t[Df(x_u)+D\kappa (x_u)]du}\right)\left( \prod_{i=1}^p \left|{\sf R}^{\eps,\delta}_1(s_i)\right| \right)ds_1 \cdots ds_p \\
& \qquad \qquad \qquad \qquad \qquad \qquad \qquad \quad  \le \int_{[0,t]^p}\left( \prod_{i=1}^p e^{\int_{s_i}^t[Df(x_u)+D\kappa (x_u)]du}\right)\left( \sum_{i=1}^p \left|{\sf R}^{\eps,\delta}_1(s_i)\right|^p \right)ds_1 \cdots ds_p.
\end{aligned}
\end{equation*}
Now, taking expectation, using equation \eqref{E:R1-R2-Est},  the local Lipschitz condition \ref{A:Poly-Lip} of $f$ and H\"{o}lder's inequality, we have
\begin{align*}
&\BE\left[\left|\int_0^t e^{\int_s^t[Df(x_u)+D\kappa (x_u)]du} {\sf R}^{\eps,\delta}_1(s)ds\right|^p\right]\\
& \qquad \qquad \qquad \qquad \quad  \le \int_{[0,t]^p}\left( \prod_{i=1}^p e^{\int_{s_i}^t[Df(x_u)+D\kappa (x_u)]du} \right) \sum_{i=1}^p \BE\left[\left|{\sf R}^{\eps,\delta}_1(s_i)\right|^p \right] ds_1 \cdots ds_p \\
 &\qquad \qquad \qquad \qquad \quad \le \frac{C}{\eps^p}\int_{[0,t]^p}\left( \prod_{i=1}^p e^{\int_{s_i}^t[Df(x_u)+D\kappa (x_u)]du}\right)\sum_{i=1}^p \left(\BE\left[\left|X_{s_i}^{\eps,\delta}-x_{s_i} \right|^{4p}\right]^{1/2} \right) ds_1 \cdots ds_p.
\end{align*}

Employing Theorem \ref{T:LLN-UIT}, Lemma \ref{L:Poly-p-Moment} and Assumption \ref{A:Poly-Int} in the above equation, we get the required result. Similarly, following the same steps and recalling the definition of ${\sf R}_2^{\eps,\delta}(t)$ from equation \eqref{E:Remainder-terms}, we have
\begin{equation*}
\sup_{t \ge 0}\BE\left|\int_0^t e^{\int_s^t[Df(x_u)+D\kappa (x_u)]du} {\sf R}^{\eps,\delta}_2(s)ds\right|^p\le \frac{C}{\eps^{p}}(\delta^{2p}+\eps^{2p} +{\delta}^{p}\eps^{2p}).
\end{equation*}
\end{proof}

Before proceeding with the proof of Proposition \ref{P:CLT-R-3}, we find it convenient to state and prove Lemma \ref{L:R-3-interMedTerm} below.

\begin{lemma}\label{L:R-3-interMedTerm}
Let $x_{t}$ and $X_{t}^{\varepsilon,\delta}$ solve \eqref{E:det-sys} and \eqref{E:W-S-SDE-With-pert}, respectively. For any integer $p\ge 1$ and all sufficiently small $\eps,\delta>0$, there exists a positive constant $C_{\ref{L:R-3-interMedTerm}}$ such that
\begin{equation}\label{E:R-3-interMedTerm}
\begin{aligned}
\sup_{u \ge 0}\BE \left|\left(D\kappa(x_u)-D\kappa(X_{\pi_{\delta}(u)}^{\eps,\delta})\right)\left(X_u^{\eps,\delta}-X_{{\pi_\delta(u)}}^{\eps,\delta}\right)\right|^{p} & \le (\eps^{2p}+\delta^{2p}+\eps^{2p} \delta^p)C_{\ref{L:R-3-interMedTerm}},\\
\sup_{u \ge 0} \BE\left|X_u^{\eps,\delta}-X_{\pi_{\delta}(u)}^{\eps,\delta} \right|^{2p} & \le (\eps^{2p}+\delta^{2p}+\eps^{2p} \delta^p)C_{\ref{L:R-3-interMedTerm}}.
\end{aligned}
\end{equation}
\end{lemma}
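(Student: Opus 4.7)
The plan is to prove the two bounds of Lemma \ref{L:R-3-interMedTerm} by reducing them to results already established in the paper. The second inequality is essentially a restatement of Lemma \ref{L:Poly-LLN-Int} with $p$ replaced by $2p$: that lemma yields $\sup_{u \ge 0}\BE|X_u^{\eps,\delta}-X_{\pi_\delta(u)}^{\eps,\delta}|^{2p} \le C(\delta^{2p}+\eps^{2p}\delta^p)$, which is immediately dominated by the desired right-hand side $C(\eps^{2p}+\delta^{2p}+\eps^{2p}\delta^p)$.

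For the first inequality, I will handle the two factors separately and then combine with Cauchy–Schwarz. The function $D\kappa$ is globally Lipschitz because $D^2\kappa$ is uniformly bounded by Assumption \ref{A:Poly-Int}; a mean-value-theorem bound therefore gives
\begin{equation*}
\left|D\kappa(x_u)-D\kappa(X_{\pi_\delta(u)}^{\eps,\delta})\right| \le \|D^2\kappa\|_\infty\left|x_u-X_{\pi_\delta(u)}^{\eps,\delta}\right|.
\end{equation*}
Applying $|ab|^p \le \tfrac12(a^{2p}+b^{2p})$ (or Cauchy–Schwarz in the form $\BE|ab|^p \le \sqrt{\BE a^{2p}}\sqrt{\BE b^{2p}}$) reduces the problem to controlling $\BE|x_u-X_{\pi_\delta(u)}^{\eps,\delta}|^{2p}$ and $\BE|X_u^{\eps,\delta}-X_{\pi_\delta(u)}^{\eps,\delta}|^{2p}$.

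For the first of these I split
\begin{equation*}
x_u-X_{\pi_\delta(u)}^{\eps,\delta} = \bigl(x_u-x_{\pi_\delta(u)}\bigr)+\bigl(x_{\pi_\delta(u)}-X_{\pi_\delta(u)}^{\eps,\delta}\bigr),
\end{equation*}
apply Lemma \ref{L:Sampling-Difference} to the first summand (giving a deterministic $C\delta^{2p}$) and Theorem \ref{T:LLN-UIT} with exponent $2p$ to the second (giving $C(\delta^{2p}+\eps^{2p}+\delta^p\eps^{2p})$). The second factor $\BE|X_u^{\eps,\delta}-X_{\pi_\delta(u)}^{\eps,\delta}|^{2p}$ is bounded by Lemma \ref{L:Poly-LLN-Int} (with $p$ replaced by $2p$) by $C(\delta^{2p}+\eps^{2p}\delta^p)$. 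Combining the two factors via AM–GM ($\sqrt{AB}\le (A+B)/2$) then produces the claimed bound $C(\eps^{2p}+\delta^{2p}+\eps^{2p}\delta^p)$, where only the $\eps^{2p}$ term from the Theorem \ref{T:LLN-UIT} estimate survives on its own and the mixed cross terms are absorbed into the three retained powers.

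There is no real obstacle here: the proof is essentially bookkeeping over exponents and checking that AM–GM yields precisely the three terms on the right. The only care needed is to use Assumption \ref{A:Poly-Int} (boundedness of $D^2\kappa$) rather than just Lipschitz continuity of $\kappa$, and to apply Lemma \ref{L:Poly-LLN-Int} and Theorem \ref{T:LLN-UIT} at the doubled exponent $2p$ so that Cauchy–Schwarz gives the correct power.
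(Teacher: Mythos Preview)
Your proposal is correct and follows essentially the same route as the paper: bound $|D\kappa(x_u)-D\kappa(X_{\pi_\delta(u)}^{\eps,\delta})|$ via the boundedness of $D^2\kappa$, then reduce everything by AM--GM/triangle inequality to the three building blocks $\BE|X_u^{\eps,\delta}-x_u|^{2p}$, $|x_u-x_{\pi_\delta(u)}|^{2p}$, and $\BE|x_{\pi_\delta(u)}-X_{\pi_\delta(u)}^{\eps,\delta}|^{2p}$, handled respectively by Theorem~\ref{T:LLN-UIT}, Lemma~\ref{L:Sampling-Difference}, and Theorem~\ref{T:LLN-UIT}. The only cosmetic difference is that for the second inequality you invoke Lemma~\ref{L:Poly-LLN-Int} at exponent $2p$ (which is actually slightly sharper), whereas the paper instead splits $X_u^{\eps,\delta}-X_{\pi_\delta(u)}^{\eps,\delta}$ through $x_u$ and $x_{\pi_\delta(u)}$ and again appeals to Theorem~\ref{T:LLN-UIT} and Lemma~\ref{L:Sampling-Difference}.
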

\begin{proof}
%[Proof of Lemma \ref{L:R-3-interMedTerm}]
We first prove the first line of equation \eqref{E:R-3-interMedTerm}. For this, we first get an estimate for the term  $D\kappa(x_u)-D\kappa(X_{\pi_{\delta}(u)}^{\eps,\delta}).$ Using Taylor's theorem and boundedness of the second-order derivatives of $\kappa$, we have
\begin{equation*}
\left|D\kappa(X_u^{\eps,\delta})-D\kappa (x_u)\right|\le |D^2\kappa(z)||X_u^{\eps,\delta}-x_{u}|\le C|X_u^{\eps,\delta}-x_{u}|,
\end{equation*}
where $z \in \BR$ is a point (random) lying on the line segment joining $X_u^{\eps,\delta}$ and $x_{u}.$ By the similar arguments, we have $\left|D\kappa (X_u^{\eps,\delta})- D\kappa(X_{\pi_{\delta}(u)}^{\eps,\delta})\right|\le C|X_u^{\eps,\delta}-X_{\pi_{\delta}(u)}^{\eps,\delta}|.$ Further, writing $D\kappa(x_u)-D\kappa(X_{\pi_{\delta}(u)}^{\eps,\delta})$ as $D\kappa(x_u)-D\kappa (X_u^{\eps,\delta})+D\kappa (X_u^{\eps,\delta})-D\kappa(X_{\pi_{\delta}(u)}^{\eps,\delta})$ followed by the triangle inequality, we obtain
\begin{equation}\label{E:R-3-E1}
D\kappa(x_u)-D\kappa(X_{\pi_{\delta}(u)}^{\eps,\delta}) \le C\left(|X_u^{\eps,\delta}-x_{u}|+ |X_u^{\eps,\delta}-X_{\pi_{\delta}(u)}^{\eps,\delta}|\right).
\end{equation}
Now, for the first line in equation \eqref{E:R-3-interMedTerm}, employing equation \eqref{E:R-3-E1} and using a simple algebra, we have
\begin{multline}\label{E:Higher-Power-Der}
%\begin{aligned}
\left|\left[D\kappa(x_u)-D\kappa(X_{\pi_{\delta}(u)}^{\eps,\delta})\right]\left(X_u^{\eps,\delta}-X_{{\pi_\delta(u)}}^{\eps,\delta}\right)\right|^p  \le C\left(|X_u^{\eps,\delta}-x_u|^p+\left|X_u^{\eps,\delta}-X_{\pi_{\delta}(u)}^{\eps,\delta}\right|^p\right)\left|X_u^{\eps,\delta}-X_{{\pi_\delta(u)}}^{\eps,\delta}\right|^p\\
\qquad \qquad \qquad \qquad \qquad \qquad \qquad \qquad \quad \le C \left(|X_u^{\eps,\delta}-x_u|^p\left|X_u^{\eps,\delta}-X_{{\pi_\delta(u)}}^{\eps,\delta}\right|^p+\left|X_u^{\eps,\delta}-X_{\pi_{\delta}(u)}^{\eps,\delta}\right|^{2p}\right)\\
\qquad \qquad \qquad \qquad \qquad \qquad \quad  \le C\left(|X_u^{\eps,\delta}-x_u|^{2p}+ |X_u^{\eps,\delta}-x_u|^p\left|x_u-X_{\pi_{\delta}(u)}^{\eps,\delta}\right|^p+\left|X_u^{\eps,\delta}-X_{\pi_{\delta}(u)}^{\eps,\delta}\right|^{2p}\right)\\
\qquad \qquad \qquad \qquad \le C\left(\left|X_u^{\eps,\delta}-x_u\right|^{2p}+ |x_u-x_{\pi_{\delta}(u)}|^{2p}+\left|x_{\pi_{\delta}(u)}-X_{\pi_{\delta}(u)}^{\eps,\delta}\right|^{2p}\right).
%\end{aligned}
\end{multline}
Finally, using Theorem \ref{T:LLN-UIT} and Lemma \ref{L:Sampling-Difference} for the term $\left|x_u-x_{\pi_\delta(u)}\right|^{2p}$, we get
\begin{equation*}
\begin{aligned}
\sup_{u \ge 0}\BE\left[ \left|\left(D\kappa(x_u)-D\kappa(X_{\pi_{\delta}(u)}^{\eps,\delta})\right)\left(X_u^{\eps,\delta}-X_{{\pi_\delta(u)}}^{\eps,\delta}\right)\right|^{p} \right] & \le (\eps^{2p}+\delta^{2p}+\eps^{2p} \delta^p)C.
\end{aligned}
\end{equation*}
The second line of equation \eqref{E:R-3-interMedTerm} is obtained by using Theorem \ref{T:LLN-UIT} and Lemma \ref{L:Sampling-Difference}.
\end{proof}

\begin{proposition}\label{P:CLT-R-3}
Let ${\sf R}_3^{\eps,\delta}(t)$ be defined as in equation \eqref{E:Remainder-terms} and $p\in \BN$. Then, for all sufficiently small $\eps,\delta>0,$ there exists a positive constant $C_{\ref{P:CLT-R-3}}$ such that
\begin{equation*}
\sup_{t \ge 0}\BE\left|\int_0^t e^{\int_s^t[Df(x_u)+D\kappa (x_u)]du} {\sf R}^{\eps,\delta}_3(s)ds\right|^p\le \frac{C_{\ref{P:CLT-R-3}}}{\eps^p}(\eps^{2p}+\delta^{2p}+\eps^{2p} \delta^p).
\end{equation*}
\end{proposition}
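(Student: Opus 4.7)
The proof plan is to mirror the structure used for Proposition \ref{P:CLT-R-12}, but with a Taylor expansion taken around the base point $X_{\pi_\delta(s)}^{\eps,\delta}$ rather than $x_s$, so that Lemma \ref{L:R-3-interMedTerm} can be applied directly. Specifically, writing the second-order Taylor expansion of $\kappa$ about $X_{\pi_\delta(s)}^{\eps,\delta}$, I would obtain
\begin{equation*}
\kappa(X_s^{\eps,\delta})-\kappa(X_{\pi_\delta(s)}^{\eps,\delta})=D\kappa(X_{\pi_\delta(s)}^{\eps,\delta})(X_s^{\eps,\delta}-X_{\pi_\delta(s)}^{\eps,\delta})+\frac{1}{2}D^2\kappa(z)(X_s^{\eps,\delta}-X_{\pi_\delta(s)}^{\eps,\delta})^2,
\end{equation*}
for some random $z$ on the segment joining $X_{\pi_\delta(s)}^{\eps,\delta}$ and $X_s^{\eps,\delta}$. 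Substituting this into the definition of ${\sf R}_3^{\eps,\delta}(s)$ in \eqref{E:Remainder-terms} and using $-D\kappa(x_s)(X_{\pi_\delta(s)}^{\eps,\delta}-X_s^{\eps,\delta})/\eps=D\kappa(x_s)(X_s^{\eps,\delta}-X_{\pi_\delta(s)}^{\eps,\delta})/\eps$, the sampling-difference terms combine and I arrive at the decomposition
\begin{equation*}
{\sf R}_3^{\eps,\delta}(s)=\frac{1}{\eps}\bigl[D\kappa(x_s)-D\kappa(X_{\pi_\delta(s)}^{\eps,\delta})\bigr](X_s^{\eps,\delta}-X_{\pi_\delta(s)}^{\eps,\delta})-\frac{1}{2\eps}D^2\kappa(z)(X_s^{\eps,\delta}-X_{\pi_\delta(s)}^{\eps,\delta})^2.
\end{equation*}

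The second term is controlled using the boundedness of $D^2\kappa$ (Assumption \ref{A:Poly-Int}), so $|{\sf R}_3^{\eps,\delta}(s)|\leq (C/\eps)\bigl(|(D\kappa(x_s)-D\kappa(X_{\pi_\delta(s)}^{\eps,\delta}))(X_s^{\eps,\delta}-X_{\pi_\delta(s)}^{\eps,\delta})|+|X_s^{\eps,\delta}-X_{\pi_\delta(s)}^{\eps,\delta}|^2\bigr)$. From here the argument follows exactly the pattern of the proof of Proposition \ref{P:CLT-R-12}: take the $p$-th power of the integral, expand the $p$-fold product of exponential weights as a product over $[0,t]^p$, use $a_1\cdots a_p\le C(a_1^p+\cdots+a_p^p)$ to separate variables, and take expectations inside.

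The resulting bound then reduces to controlling
\begin{equation*}
\sup_{s\ge 0}\BE\bigl|[D\kappa(x_s)-D\kappa(X_{\pi_\delta(s)}^{\eps,\delta})](X_s^{\eps,\delta}-X_{\pi_\delta(s)}^{\eps,\delta})\bigr|^{p}\quad\text{and}\quad\sup_{s\ge 0}\BE|X_s^{\eps,\delta}-X_{\pi_\delta(s)}^{\eps,\delta}|^{2p},
\end{equation*}
both of which are handled by Lemma \ref{L:R-3-interMedTerm} and each is $O(\eps^{2p}+\delta^{2p}+\eps^{2p}\delta^{p})$. Together with the uniform-in-time bound $\sup_{t\ge 0}\int_0^t e^{\int_s^t[Df(x_u)+D\kappa(x_u)]du}ds<\infty$ from Assumption \ref{A:Poly-Int} (applied with $m=1$ to each of the $p$ factors), this yields the stated bound with a time-independent constant.

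The main obstacle is not conceptual but bookkeeping: one must ensure the Taylor expansion is performed around the correct base point so that the first-order remainder lines up precisely with the form in Lemma \ref{L:R-3-interMedTerm}. Had the expansion been carried out around $x_s$, one would produce terms of the form $[D\kappa(X_s^{\eps,\delta})-D\kappa(x_s)](X_s^{\eps,\delta}-X_{\pi_\delta(s)}^{\eps,\delta})$, which would require a separate estimate; expanding around $X_{\pi_\delta(s)}^{\eps,\delta}$ is the key step that makes Lemma \ref{L:R-3-interMedTerm} directly applicable and keeps the extra factor of $1/\eps$ from dominating, since both contributions produce a quadratic-in-sampling-scale gain that absorbs it and leaves exactly $\eps^{-p}(\eps^{2p}+\delta^{2p}+\eps^{2p}\delta^{p})$.
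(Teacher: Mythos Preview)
Your proposal is correct and follows essentially the same approach as the paper's own proof: Taylor-expand $\kappa$ about $X_{\pi_\delta(s)}^{\eps,\delta}$ to obtain the decomposition ${\sf R}_3^{\eps,\delta}(s)=\eps^{-1}[D\kappa(x_s)-D\kappa(X_{\pi_\delta(s)}^{\eps,\delta})](X_s^{\eps,\delta}-X_{\pi_\delta(s)}^{\eps,\delta})-\tfrac{1}{2\eps}D^2\kappa(z)(X_s^{\eps,\delta}-X_{\pi_\delta(s)}^{\eps,\delta})^2$, bound $|{\sf R}_3^{\eps,\delta}(s)|^p$ pointwise, expand the $p$-fold integral over $[0,t]^p$, pass from product to sum, take expectations, and invoke Lemma \ref{L:R-3-interMedTerm} together with Assumption \ref{A:Poly-Int}. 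Your remark about the choice of base point is apt and is exactly the maneuver the paper performs (it writes the expansion with the add-and-subtract of $D\kappa(x_s)$ already built in, but the content is identical).
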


\begin{proof}
%[Proof of Proposition \ref{P:CLT-R-3}]
Using Taylor's theorem followed by a simple algebra, we obtain

\begin{multline*}
\kappa(X_s^{\eps,\delta})=\kappa(X_{\pi_\delta(s)}^{\eps,\delta})+ \left[D\kappa(X_{\pi_\delta(s)}^{\eps,\delta})-D\kappa(x_s)\right]\left(X_s^{\eps,\delta}-X_{\pi_\delta(s)}^{\eps,\delta}\right)
+D\kappa(x_s)\left(X_s^{\eps,\delta}-X_{\pi_{\delta}(s)}^{\eps,\delta}\right)\\
\qquad \qquad \qquad \qquad \qquad \qquad +\frac{1}{2} D^2\kappa(z)\left(X_s^{\eps,\delta}-X_{\pi_\delta(s)}^{\eps,\delta}\right)^2,
\end{multline*}
where $z \in \BR$ is a point (random) lying on the line segment joining $X_{\pi_\delta(s)}^{\eps,\delta}$ and $X_s^{\eps,\delta}$. Now, using the boundedness of the second-order derivatives of $\kappa$, we get  for any integer $p\ge 1$
\begin{multline}\label{E:R3-Est-Taylor}
\left|{\sf R}_3^{\eps,\delta}(s)\right|^p \triangleq \left|\frac{\kappa(X_s^{\eps,\delta})-\kappa(X_{\pi_\delta(s)}^{\eps,\delta})}{\eps}-D\kappa(x_s)\frac{X_s^{\varepsilon, \delta}-X_{\pi_\delta(s)}^{\varepsilon, \delta}}{\eps}\right|^p\\
  \le   \frac{C}{\eps^p}\left|\left[D\kappa(x_s)-D\kappa(X_{\pi_{\delta}(s)}^{\eps,\delta})\right]\left(X_s^{\eps,\delta}-X_{{\pi_\delta(s)}}^{\eps,\delta}\right)\right|^p\\
\qquad \qquad \qquad \qquad \qquad \qquad \qquad \qquad \qquad \qquad \qquad \quad \quad + \frac{C}{\eps^p}\left| D^2\kappa(z)\left(X_s^{\eps,\delta}-X_{\pi_\delta(s)}^{\eps,\delta}\right)^2 \right|^p\\
\qquad \qquad \le  \frac{C}{\eps^p}\left|\left(D\kappa(x_s)-D\kappa(X_{\pi_{\delta}(s)}^{\eps,\delta})\right)\left(X_s^{\eps,\delta}-X_{{\pi_\delta(s)}}^{\eps,\delta}\right)\right|^p + \frac{C}{\eps^p} \left|X_s^{\eps,\delta}-X_{\pi_{\delta}(s)}^{\eps,\delta}\right|^{2p}.
\end{multline}
Next, for any integer $p\ge 1,$ we have
\begin{multline*}
%\begin{aligned}
\left|\int_0^t e^{\int_s^t[Df(x_u)+D\kappa (x_u)]du} {\sf R}^{\eps,\delta}_3(s)ds\right|^p \le \left(\int_0^t e^{\int_s^t[Df(x_u)+D\kappa (x_u)]du}\left|{\sf R}^{\eps,\delta}_3(s)\right|ds\right)^p\\
 \le \int_{[0,t]^p}\left( \prod_{i=1}^p e^{\int_{s_i}^t[Df(x_u)+D\kappa (x_u)]du}\right)\left( \sum_{i=1}^p  \left|{\sf R}^{\eps,\delta}_3(s_i)\right|^p \right)ds_1 \cdots ds_p.
%\end{aligned}
\end{multline*}
Taking expectation and using equation \eqref{E:R3-Est-Taylor}, we have
\begin{multline*}
\BE\left|\int_0^t e^{\int_s^t[Df(x_u)+D\kappa (x_u)]du} {\sf R}^{\eps,\delta}_3(s)ds\right|^p \\
\qquad \qquad \le \int_{[0,t]^p}\left( \prod_{i=1}^p e^{\int_{s_i}^t[Df(x_u)+D\kappa (x_u)]du}\right) \sum_{i=1}^p \BE\left[\left|{\sf R}^{\eps,\delta}_3(s_i)\right|^p \right] ds_1 \cdots ds_p \\
\qquad \qquad \qquad \qquad \quad \le \frac{C}{\eps^p}\int_{[0,t]^p}\left( \prod_{i=1}^p e^{\int_{s_i}^t[Df(x_u)+D\kappa (x_u)]du}\right)\sum_{i=1}^p \BE\left[\left|X_{s_i}^{\eps,\delta}-X_{\pi_{\delta}(s_i)}^{\eps,\delta} \right|^{2p} \right]ds_1 \cdots ds_p\\
 + \frac{C}{\eps^p}\int_{[0,t]^p}\left( \prod_{i=1}^p e^{\int_{s_i}^t[Df(x_u)+D\kappa (x_u)]du}\right)\times \\
\sum_{i=1}^p \BE\left[\left|\left(D\kappa(x_{s_i})-D\kappa(X_{\pi_{\delta}(s_i)}^{\eps,\delta})\right)\left(X_{s_i}^{\eps,\delta}-X_{{\pi_\delta(s_i)}}^{\eps,\delta}\right)\right|^{p} \right] ds_1 \cdots ds_p.
 \end{multline*}
Finally, using Lemma \ref{L:R-3-interMedTerm} and Assumption \ref{A:Poly-Int}, we obtain the required bound.
\end{proof}

%\section{{\sc lln} and {\sc clt} Type Results: Proof of Theorem \ref{T:LLN-UIT}, \ref{T:fluctuations-R-1-2} with Assumptions \ref{A:Poly-Lip},\ref{A:Poly-growth},\ref{A:Poly-Int}}\label{S:LLN-CLT-Proof-Polynomial-Case}
% For the proof of Theorem \ref{T:fluctuations-R-1-2} ({\sc clt} Type Result) with Assumptions \ref{A:Poly-Lip},\ref{A:Poly-growth}, and \ref{A:Poly-Int}, we add a few comments at the end of this section.
%
%
%
%
%Before proving Theorem \ref{T:LLN-UIT}, we present a helpful Lemma \ref{L:Poly-LLN-Int} below.

%\begin{theorem}
%Let $X_t^{\eps,\delta}$ and $x_t$ be the solutions of \eqref{E:W-S-SDE-With-pert} and \eqref{E:det-sys} respectively satisfying Assumptions \ref{A:Poly-growth} and \ref{A:Poly-Lip}. Then, for any $p\in \BN$, there exists a time-independent positive constant $C$ such that
%\begin{equation*}
%\sup_{t \ge 0}\BE|X_t^{\eps,\delta}-x_t|^p \le C [\eps^2+\delta^p+ \eps^p\delta^{\frac{p}{2}}].
%\end{equation*}
%\end{theorem}

\section*{Conclusions}
In this paper, we studied the asymptotic behavior of a controlled non-Markovian dynamical system under the combined effects of fast periodic sampling and small white noise. Our key contribution is to provide uniform-in-time control of its limiting behavior and of its fluctuations. The limiting stochastic dynamical system describing the fluctuations captures both the sampling and noise effects.  We approximate uniformly-in-time the pre-limit non-Markovian process by a simpler limiting Markovian process with time-independent bounds on the remainder.\\

\textbf{Acknowledgments.} This work was partially supported by the National Science Foundation {\sc dms} 2107856 and {\sc dms} 2311500. The authors would like to thank the referees for their positive feedback and helpful comments on the earlier version of the manuscript.

\bibliographystyle{alpha}
\bibliography{References}

\newcommand{\etalchar}[1]{$^{#1}$}
\begin{thebibliography}{CDG{\etalchar{+}}22}

\bibitem[BZ24]{budhiraja2024large}
Amarjit Budhiraja and Pavlos Zoubouloglou.
\newblock Large deviations for small noise diffusions over long time.
\newblock {\em Transactions of the American Mathematical Society, Series B},
  11(01):1--63, 2024.

\bibitem[CDG{\etalchar{+}}22]{crisan2022poisson}
Dan Crisan, Paul Dobson, Ben Goddard, Michela Ottobre, and Iain Souttar.
\newblock {P}oisson equations with locally-{L}ipschitz coefficients and
  uniform-in-time averaging for stochastic differential equations via strong
  exponential stability.
\newblock {\em arXiv:2204.02679}, 2022.

\bibitem[CDO21]{crisan2021uniform}
Dan Crisan, Paul Dobson, and Michela Ottobre.
\newblock Uniform-in-time estimates for the weak error of the {E}uler method
  for {SDE}s and a pathwise approach to derivative estimates for diffusion
  semigroups.
\newblock {\em Transactions of the American Mathematical Society},
  374(5):3289--3330, 2021.

\bibitem[Cer09]{cerrai2009khasminskii}
Sandra Cerrai.
\newblock A {K}hasminskii type averaging principle for stochastic
  reaction--diffusion equations.
\newblock {\em The Annals of Applied Probability}, 19(3):899--948, 2009.

\bibitem[CF09]{cerrai2009averaging}
Sandra Cerrai and Mark~I. Freidlin.
\newblock Averaging principle for a class of stochastic reaction--diffusion
  equations.
\newblock {\em Probability Theory and Related Fields}, 144(1-2):137--177, 2009.

\bibitem[DP21]{dhama2021asymptotic}
Shivam Dhama and Chetan~D. Pahlajani.
\newblock Asymptotic analysis of discrete-time models for linear control
  systems with fast random sampling.
\newblock In {\em 2021 Seventh Indian Control Conference (ICC)}, pages
  359--364. IEEE, 2021.

\bibitem[DP23]{dhama2023fluctuation}
Shivam Dhama and Chetan~D. Pahlajani.
\newblock Fluctuation analysis for a class of nonlinear systems with fast
  periodic sampling and small state-dependent white noise.
\newblock {\em Journal of Differential Equations}, 362:438--483, 2023.

\bibitem[FG20]{fang2020adaptive}
Wei Fang and Michael~B. Giles.
\newblock Adaptive {E}uler--{M}aruyama method for {SDE}s with nonglobally
  {L}ipschitz drift.
\newblock {\em The Annals of Applied Probability}, 30(2):526--560, 2020.

\bibitem[FS99]{FreidlinSowers}
Mark~I. Freidlin and Richard~B. Sowers.
\newblock A comparison of homogenization and large deviations, with
  applications to wavefront propagation.
\newblock {\em Stochastic Processes and their Applications}, 82:23--52, 1999.

\bibitem[FW12]{FW_RPDS}
Mark~I. Freidlin and Alexander~D. Wentzell.
\newblock {\em Random Perturbations of Dynamical Systems}.
\newblock Springer, third edition, 2012.

\bibitem[GST12]{GST-HybDynSys-book}
Rafal Goebel, Ricardo~G. Sanfelice, and Andrew~R. Teel.
\newblock {\em Hybrid dynamical systems: Modeling, stability and robustness}.
\newblock Princeton University Press, 2012.

\bibitem[Kha66]{has1966stochastic}
R.Z. Khasminskii.
\newblock On stochastic processes defined by differential equations with a
  small parameter.
\newblock {\em Theory of Probability \& Its Applications}, 11(2):211--228,
  1966.

\bibitem[Kha68]{khasminskij1968principle}
R.Z. Khasminskii.
\newblock On the principle of averaging the {I}t{\^o}'s stochastic differential
  equations.
\newblock {\em Kybernetika}, 4(3):260--279, 1968.

\bibitem[KS91]{KS91}
Ioannis Karatzas and Steven Shreve.
\newblock {\em Brownian Motion and Stochastic Calculus}, volume 113 of {\em
  Graduate Texts in Mathematics}.
\newblock Springer-Verlag New York, second edition, 1991.

\bibitem[NTC09]{NesicTeelCarnevale-TAC2009}
Dragan Nesic, Andrew~R. Teel, and Daniele Carnevale.
\newblock Explicit computation of the sampling period in emulation of
  controllers for nonlinear sampled-data systems.
\newblock {\em IEEE Transactions on Automatic Control}, 54(3):619--624, March
  2009.

\bibitem[PV01]{pardoux2001poisson}
E.~Pardoux and A.~Yu Veretennikov.
\newblock On the {P}oisson equation and diffusion approximation {I}.
\newblock {\em The Annals of Probability}, 29(3):1061--1085, 2001.

\bibitem[PV03]{pardoux2003poisson}
E.~Pardoux and A.~Yu Veretennikov.
\newblock On {P}oisson equation and diffusion approximation {II}.
\newblock {\em The Annals of Probability}, 31(3):1166--1192, 2003.

\bibitem[PV05]{pardoux2005poisson}
E.~Pardoux and A.~Yu Veretennikov.
\newblock On the {P}oisson equation and diffusion approximation {III}.
\newblock {\em The Annals of Probability}, 33(3):1111--1133, 2005.

\bibitem[RX21a]{rockner2021averaging}
Michael R{\"o}ckner and Longjie Xie.
\newblock Averaging principle and normal deviations for multiscale stochastic
  systems.
\newblock {\em Communications in Mathematical Physics}, 383(3):1889--1937,
  2021.

\bibitem[RX21b]{rockner2021diffusion}
Michael R{\"o}ckner and Longjie Xie.
\newblock Diffusion approximation for fully coupled stochastic differential
  equations.
\newblock {\em The Annals of Probability}, 49(3):1205--1236, 2021.

\bibitem[Spi13]{Spil-AppMathOptim}
Konstantinos Spiliopoulos.
\newblock Large deviations and importance sampling for systems of slow-fast
  motion.
\newblock {\em Applied Mathematics and Optimization}, 67:123--161, 2013.

\bibitem[Spi14]{spiliopoulos2014fluctuation}
Konstantinos Spiliopoulos.
\newblock Fluctuation analysis and short time asymptotics for multiple scales
  diffusion processes.
\newblock {\em Stochastics and Dynamics}, 14(03):1350026, 2014.

\bibitem[Ver00]{veretennikov2000large}
A.~Yu Veretennikov.
\newblock On large deviations for {SDE}s with small diffusion and averaging.
\newblock {\em Stochastic Processes and their Applications}, 89(1):69--79,
  2000.

\bibitem[YG14]{YuzGoodwin-book}
Juan~I. Yuz and Graham~C. Goodwin.
\newblock {\em Sampled-data models for linear and nonlinear systems}.
\newblock Springer, 2014.

\end{thebibliography}
\end{document}